\newcommand{\cx}{\mathbb{C}}
\newcommand{\ai}{\sqrt{-1}}
\newcommand{\isom}{\xrightarrow{\sim}}
\newcommand{\del}{{\partial}}
\newcommand{\delbar}{{\overline{\partial}}}
\newcommand{\ve}{\varepsilon}
\theoremstyle{plain}
\newtheorem{theorem}{Theorem}[section]
\newtheorem{lemma}[theorem]{Lemma}
\newtheorem{proposition}[theorem]{Proposition}
\newtheorem{corollary}[theorem]{Corollary}
\theoremstyle{definition}
\newtheorem{definition}[theorem]{Definition}
\theoremstyle{definition}
\newtheorem{remark}[theorem]{Remark}
\newtheorem{question}[theorem]{Question}
\begin{document}

\title[Ueda's lemma via uniform H\"ormander estimates for flat line bundles]{Ueda's lemma via uniform H\"ormander estimates for flat line bundles}
\author{Yoshinori Hashimoto}
\email{yhashimoto@omu.ac.jp}
\author{Takayuki Koike}
\email{tkoike@omu.ac.jp}
\address{Department of Mathematics, Osaka Metropolitan University, 3-3-138, Sugimoto, Sumiyoshi-ku, Osaka, 558-8585, Japan.}
\date{\today}

\maketitle

{\bf Abstract.} 
We establish H\"ormander-type $L^2$-estimates for the $\delbar$-operators that hold uniformly for all nontrivial flat holomorphic line bundles on compact K\"ahler manifolds. Our result can be regarded as a $\delbar$-version of Ueda's lemma on the operator norm of \v{C}ech coboundaries for flat line bundles and indeed recovers the original version of Ueda's lemma for compact K\"ahler manifolds. A partial generalisation for $(p,0)$-forms on Ricci-flat manifolds is also given.
\vskip3mm

\tableofcontents

\section{Introduction}
The existence theorem of solutions to the $\delbar$-equations with $L^2$-estimates play an essential role in complex analytic geometry. 
Let $X$ be a complex manifold of dimension $n$ and $F$ be a holomorphic vector bundle on $X$. 
In this paper, our interest is in the case where $X$ is compact K\"ahler and $F$ is a line bundle. In what follows, we fix a K\"ahler metric $g$ of $X$ and a Hermitian (fibre) metric $h$ of $F$. 
In order to find a solution $u$ of the $\delbar$-equation $\delbar u = v$ with $L^2$-estimate for a given smooth $\delbar$-closed $(p, q)$-form $v$ with values in $F$, 
according to the argument originating in \cite{Hormander}, 
it is known to be important to show an estimate such as 
\begin{equation}\label{ineq:laplacian}
\int_X|w|_{h, g}^2\,dV_g \leq C\cdot \int_X\langle\Delta w, w\rangle_{h, g}\,dV_g\quad (w\in A^{p,q}(X,F))
\end{equation}
with a constant $C>0$ independent of $w$, where $dV_g$ is the volume form of $X$ defined by $g$ and $\Delta := \delbar\,\delbar^*_h+\delbar^*_h\delbar$ is the (complex) Laplace--Beltrami operator acting on the space $A^{p,q}(X,F)$ of all smooth $(p, q)$-forms with values in $F$. 

One of the most important ideas to obtain inequalities as above is to apply Nakano's identity. When the Chern curvature tensor $\sqrt{-1}\Theta_h$ of $h$ is semi-positive, in order to show the inequality of type (\ref{ineq:laplacian}) in accordance with this idea, one needs to assume that $(\sqrt{-1}\Theta_h)_x$ has at least $2n+1-p-q$ positive eigenvalues on each $x\in X$ (\cite{AN}, \cite{AV}, \cite{Girbau}, see also \cite[\S 4.D]{Demailly}). 
Thus, it is difficult to show the inequality of type (\ref{ineq:laplacian}) for a line bundle $F$ with semi-positively curved metric $h$ at least by the arguments based on Nakano's identity when $p+q$ is small, especially when $p<n$ and $q=1$. 

Nevertheless, even when $h$ is {\rm flat} (i.e.~$\sqrt{-1}\Theta_h\equiv 0$), 
sometimes one can solve $\delbar$-equations on $F$ (consider a holomorphically non-trivial line bundle of degree $0$ on an elliptic curve, for example). 
In such cases, Ueda's lemma on $L^\infty$-estimate of the \v{C}ech coboundary operator $\delta$ (\cite[Lemma 4]{Ueda}, see also Corollary \ref{cor:main} below) can be regarded as the counterpart of H\"ormander-type estimate for the operator $\delbar$ via the \v{C}ech--Dolbeault correspondence (see \S \ref{section:C-D_corresp}). 
Motivated by this observation, we investigate an $L^2$-variant of Ueda's lemma in the present paper. 

Denote by $\mathcal{P}(X)$ the set of all the flat holomorphic line bundles. 
When $X$ is compact K\"ahler, $\mathcal{P}(X)$ can be regarded as a subset of the Picard group ${\rm Pic}(X)$ of $X$. 
As a subset of ${\rm Pic}(X)$, $\mathcal{P}(X)$ consists of finitely many connected components, and each of the components is homeomorphic to the connected component ${\rm Pic}^0(X)$ which contains the holomorphically trivial line bundle $\mathbb{I}_X$ (see \S \ref{section:flathlb}). 
Denote by $\mathsf{d}_0$ the Euclidean distance on ${\rm Pic}^0(X)$ ($\mathsf{d}_0$ is defined by choosing the Euclidean coordinates and hence is not canonical, c.f.~Remark \ref{rmk:amb_for_d}). 
As explained in \S \ref{section:ext_of_invariant_dist}, one can extend $\mathsf{d}_0$ to define an invariant distance (in the sense of Ueda \cite[\S 4.1]{Ueda}) on $\mathcal{P}(X)$, which will be denoted by $\mathsf{d}$. 
Our main results can be stated as follows. 

\begin{theorem}\label{thm:main_1}
Let $(X, g)$ be a compact K\"ahler manifold. 
There exists a constant $K>0$ such that, 
for any element $F\in \mathcal{P}(X)\setminus\{\mathbb{I}_X\}$ and any smooth $\delbar$-closed $(0, 1)$-form $v$ with values in $F$ whose Dolbeault cohomology class $[v]\in H^{0, 1}(X, F)$ is trivial, there exists a unique smooth global section $u$ of $F$ such that $\delbar u = v$ and 
\[
\sqrt{\int_X|u|_{h}^2\,dV_g} \leq \frac{K}{\mathsf{d}(\mathbb{I}_X, F)} \sqrt{\int_X|v|_{h, g}^2\,dV_g}
\]
hold for a flat metric $h$ on $F$. 
\end{theorem}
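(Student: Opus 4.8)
My plan is to reduce the estimate to a uniform lower bound on the smallest eigenvalue of the twisted Laplacian, and then to establish that bound by a perturbation analysis near the trivial bundle.

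\textbf{Step 1 (reduction to an eigenvalue estimate).} Fix $F \in \mathcal{P}(X)\setminus\{\mathbb{I}_X\}$ with flat metric $h$. Since $F$ is flat and nontrivial, a nonzero holomorphic section would be parallel by the Bochner formula and would trivialise $F$; hence $H^0(X,F) = 0$ and $\delbar\colon A^0(X,F)\to A^{0,1}(X,F)$ is injective with closed range. As $[v]=0$ there is a smooth $u$ with $\delbar u = v$, unique by injectivity, with smoothness from elliptic regularity. Writing $\lambda_1(F)>0$ for the smallest eigenvalue of $\Delta=\delbar^*_h\delbar$ on $A^0(X,F)$ (positive because its kernel $H^0(X,F)$ vanishes) and expanding $u$ in eigensections, one gets $\int_X|u|_h^2\,dV_g \le \lambda_1(F)^{-1}\langle\Delta u,u\rangle = \lambda_1(F)^{-1}\int_X|v|^2_{h,g}\,dV_g$. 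Thus the theorem follows once I show $\lambda_1(F)\ge c\,\mathsf{d}(\mathbb{I}_X,F)^2$ for some $c>0$ independent of $F$, equivalently $K := \sup_{F\ne\mathbb{I}_X}\mathsf{d}(\mathbb{I}_X,F)/\sqrt{\lambda_1(F)}<\infty$; note this ratio is insensitive to the scaling ambiguity of the flat metric $h$.

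\textbf{Step 2 (Kähler reduction and the region away from $\mathbb{I}_X$).} On a compact \kah manifold the Bochner--Kodaira--Nakano identity with vanishing Chern curvature gives $\delbar^*_h\delbar = \tfrac12\nabla^*\nabla$ on sections, where $\nabla$ is the flat connection of $(F,h)$; so $\lambda_1(F)$ is, up to the factor $\tfrac12$, the smallest eigenvalue of the connection Laplacian. As $F$ ranges over $\mathcal{P}(X)$ these operators form a continuous family, and $\lambda_1(F)$ is continuous and strictly positive off $\mathbb{I}_X$. On the complement of any fixed neighbourhood $U$ of $\mathbb{I}_X$ the set $\mathcal{P}(X)\setminus U$ is compact, so $\lambda_1$ is bounded below there while $\mathsf{d}(\mathbb{I}_X,\cdot)$ is bounded; the ratio is therefore controlled on $\mathcal{P}(X)\setminus U$, including all components of $\mathcal{P}(X)$ not containing $\mathbb{I}_X$. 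It remains to treat $F$ close to $\mathbb{I}_X$ inside $\mathrm{Pic}^0(X)$.

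\textbf{Step 3 (the crux: lower bound near the trivial bundle).} Parametrise the flat bundles near $\mathbb{I}_X$ by $\nabla_t = d + \sqrt{-1}\,\alpha_t$ with $\alpha_t = \sum_j t_j\omega_j$, where $\{\omega_j\}$ is a basis of real harmonic $1$-forms, so that $\mathsf{d}(\mathbb{I}_X,F_t)$ is comparable to $|t|$. For a unit section $u$ decompose $u = a + u'$ into its constant part $a$ and its $L^2$-orthogonal complement $u'$ (so $\int_X u'=0$). Since $da=0$ we have $\nabla_t u = du' + \sqrt{-1}\,\alpha_t u$, and the decisive point is that the harmonic gauge $d^*\alpha_t = 0$ forces the cross term $\langle du',\sqrt{-1}\,a\alpha_t\rangle = \sqrt{-1}\,\bar a\langle u',d^*\alpha_t\rangle$ to vanish. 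Combining this with the Poincaré spectral gap $\|du'\|^2\ge\lambda^{(0)}_1\|u'\|^2$ of the untwisted Laplacian and with the fact that $t\mapsto\|\alpha_t\|^2$ is a positive-definite quadratic form, one obtains for $|t|$ small the bound $\|\nabla_t u\|^2 \gtrsim \|u'\|^2 + |a|^2|t|^2$; since $\|u\|=1$ means $|a|^2\mathrm{Vol}(X)+\|u'\|^2=1$, minimising over the two regimes yields $\lambda_1(F_t)\gtrsim|t|^2\gtrsim\mathsf{d}(\mathbb{I}_X,F_t)^2$, which closes the estimate.

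The main obstacle is Step 3: one must secure the lower bound, not merely the upper bound furnished by testing against constants, uniformly as $t\to0$. This is precisely where the vanishing of the cross term in the harmonic gauge and the spectral gap of the untwisted Laplacian are indispensable; the remaining higher-order cross terms of the form $\langle\alpha_t u',\alpha_t\rangle$ must be absorbed into these leading contributions, and one must verify that the Euclidean parameter $|t|$ is genuinely comparable to $\mathsf{d}(\mathbb{I}_X,F_t)$ to complete the argument.
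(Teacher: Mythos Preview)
Your approach is essentially the same as the paper's. Both arguments reduce to a lower bound $\lambda_1(F)\gtrsim \mathsf{d}(\mathbb{I}_X,F)^2$, handle the region away from $\mathbb{I}_X$ by compactness and (semi-)continuity of the first eigenvalue, and attack the region near $\mathbb{I}_X$ by trivialising the bundle and writing the twisted operator as a perturbation. Your orthogonal decomposition $u=a+u'$ is exactly the paper's $u=\beta+\alpha$ with $\beta\in\mathbb{H}^{0,0}$, and your vanishing $\langle du',\alpha_t\rangle=\langle u',d^*\alpha_t\rangle=0$ is the same mechanism as the paper's Proposition~3.1, where $(\delbar_\rho\alpha,\delbar_\rho\beta)$ loses its first-order term because $\delbar^*\overline{\zeta_j}=0$ (harmonicity of holomorphic $1$-forms on a K\"ahler manifold).

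The only genuine difference is packaging: you pass through the Bochner identity $\delbar^*_h\delbar=\tfrac12\nabla^*\nabla$ and work with real harmonic $1$-forms $\alpha_t$, while the paper stays with the perturbed operator $\delbar_\rho=\delbar+\sum_j c_j(\rho)\overline{\zeta_j}\wedge$. This buys you nothing and costs you nothing; the estimates are in bijection.

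One point to tighten: in Step~3 you track the cross term $\langle du',\sqrt{-1}a\alpha_t\rangle$ and the quadratic term $\langle \alpha_t u',\alpha_t\rangle$, but you omit the first-order cross term $2\mathrm{Re}\langle du',\sqrt{-1}\alpha_t u'\rangle$, which does \emph{not} vanish. It is bounded by $C|t|\,\|du'\|\,\|u'\|\le C'|t|\,\|du'\|^2$ via the Poincar\'e gap and hence absorbable into $\|du'\|^2$ for small $|t|$ --- so your conclusion survives --- but it should be mentioned. In Step~2, ``these operators form a continuous family'' requires, on each component $\mathrm{Pic}^t(X)$ with $t\neq 0$, fixing a basepoint $F_t$ and trivialising $F_t\otimes L$ for $L\in\mathrm{Pic}^0(X)$ via the unit section of $L$; the paper carries this out explicitly (its Propositions~3.2--3.3).
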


A feature of the above theorem is that it applies \textit{uniformly} to all $F\in \mathcal{P}(X)\setminus\{\mathbb{I}_X\}$, with the factor $1/\mathsf{d}(\mathbb{I}_X, F)$ blowing up as $F$ approaches $\mathbb{I}_X$; the appearance of such a distance is a feature that do not commonly arise in the case of positive curvature, which is the usual setting for H\"ormander's estimates. We may call the above result a ``$\delbar$-version'' of Ueda's lemma for flat holomorphic line bundles, in which $\mathsf{d}(\mathbb{I}_X, F)$ also appear in a similar manner. Before discussing connections to Ueda's lemma in more details, we present a generalisation of the above result to $(p,0)$-forms. 

\begin{theorem}\label{thm:main_2}
Let $(X, g)$ be a K\"ahler manifold of dimension $n$ 
and $p$ be a non-negative integer which is less than or equal to $n$. 
Assume that $g$ is Ricci-flat. Then there exist a neighbourhood $B$ of $\mathbb{I}_X$ in $\mathcal{P}(X)$ and a constant $K>0$ such that, 
for any element $F\in B\setminus\{\mathbb{I}_X\}$ and any smooth $\delbar$-closed $(p, 1)$-form $v$ with values in $F$ whose Dolbeault cohomology class $[v]\in H^{p, 1}(X, F)$ is trivial, there exists a unique smooth $(p, 0)$-form $u$ with values in $F$ such that $\delbar u = v$ and 
\[
\sqrt{\int_X|u|_{h, g}^2\,dV_g} \leq \frac{K}{\mathsf{d}(\mathbb{I}_X, F)} \sqrt{\int_X|v|_{h, g}^2\,dV_g}
\]
hold for a flat metric $h$ on $F$. In particular, 
\begin{equation*}
	\ker (\delbar \colon A^{p,0}(X,F) \to A^{p,1}(X,F)) = 0 
\end{equation*}
and hence
\begin{equation*}
	H^{p,0} (X,F) = 0
\end{equation*}
for all $F\in B\setminus\{\mathbb{I}_X\}$. 
\end{theorem}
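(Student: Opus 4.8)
The plan is to reduce the entire statement to a single spectral lower bound on $(p,0)$-forms. Concretely, I would first prove that there exist a constant $c>0$ and a neighbourhood $B$ of $\mathbb{I}_X$ such that
\[
\int_X |\delbar u|_{h,g}^2\, dV_g \;\geq\; c\, \mathsf{d}(\mathbb{I}_X, F)^2 \int_X |u|_{h,g}^2\, dV_g \qquad (u \in A^{p,0}(X,F))
\]
for every $F \in B\setminus\{\mathbb{I}_X\}$. Granting this, the theorem follows by standard Hodge theory: since $\delbar^*u=0$ automatically for a $(p,0)$-form, the left-hand side equals $\langle \Delta u, u\rangle_{h,g}$, so the bound says that $\Delta=\delbar^*\delbar$ on $A^{p,0}(X,F)$ has trivial kernel, whence $\ker(\delbar\colon A^{p,0}\to A^{p,1}) = H^{p,0}(X,F) = 0$. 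Given a $\delbar$-closed $v$ with $[v]=0$, the form $u := \delbar^* G v$ (with $G$ the Green operator on $A^{p,1}(X,F)$) satisfies $\delbar u = v$, is unique by the vanishing, and obeys $\int_X|v|_{h,g}^2 = \int_X|\delbar u|_{h,g}^2 \geq c\,\mathsf{d}(\mathbb{I}_X,F)^2\int_X|u|_{h,g}^2$, which is exactly the desired estimate with $K = 1/\sqrt{c}$.

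To prove the displayed inequality I would trivialise $F$ near $\mathbb{I}_X$: a flat bundle close to $\mathbb{I}_X$ can be written as $(\underline{\cx}, \delbar_F)$ with $\delbar_F = \delbar + \theta^{0,1}\wedge(\cdot)$, where $\theta$ is a small real harmonic $1$-form, the flat metric becoming the standard one and $\|\theta\|_{L^2} \asymp \mathsf{d}(\mathbb{I}_X, F)$. For $u \in A^{p,0}(X)$ I then split $u = u_0 + u_1$ with $u_0$ in the finite-dimensional space $\mathcal{H}^{p,0}(X)$ of harmonic (equivalently holomorphic) $p$-forms and $u_1 \perp \mathcal{H}^{p,0}(X)$, so that $\delbar u = \delbar u_1$ and $\|\delbar u_1\|^2 \geq \mu\|u_1\|^2$ for the fixed spectral gap $\mu>0$ of $\Delta$ at $F=\mathbb{I}_X$. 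Expanding,
\[
\|\delbar_F u\|^2 = \|\delbar u_1\|^2 + 2\,\rea\langle \delbar u_1, \theta^{0,1}\wedge u_0\rangle + 2\,\rea\langle \delbar u_1, \theta^{0,1}\wedge u_1\rangle + \|\theta^{0,1}\wedge u\|^2,
\]
and the whole argument hinges on controlling the two cross terms.

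The decisive point, and the only place where Ricci-flatness is used, is that the first cross term vanishes identically. Since $g$ is Ricci-flat, every $u_0\in\mathcal{H}^{p,0}(X)$ is parallel; combined with $\delbar^*\theta^{0,1}=0$ and the fact that a $(p,0)$-form is annihilated by all antiholomorphic contractions, a frame computation gives $\delbar^*(\theta^{0,1}\wedge u_0) = (\delbar^*\theta^{0,1})\wedge u_0 = 0$ (in fact $\theta^{0,1}\wedge u_0$ is harmonic). Hence $\langle \delbar u_1, \theta^{0,1}\wedge u_0\rangle = \langle u_1, \delbar^*(\theta^{0,1}\wedge u_0)\rangle = 0$. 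Parallelism also makes $|u_0|$ pointwise constant and, because wedging a $(0,1)$-form with a $(p,0)$-form is isometric, yields $\|\theta^{0,1}\wedge u_0\|^2 = c_0\|\theta\|^2\|u_0\|^2$ with $c_0 = 1/(2\,\mathrm{Vol})$ independent of the direction of $\theta$. The remaining cross term is harmless: $2|\langle\delbar u_1,\theta^{0,1}\wedge u_1\rangle| \leq \tfrac12\|\delbar u_1\|^2 + C\|\theta\|^2\|u_1\|^2$ using $\|\theta\|_{L^\infty}\leq C\|\theta\|_{L^2}$ on the finite-dimensional harmonic space. Feeding these into the expansion and shrinking $B$ so that $\|\theta\|$ is small enough to absorb the $\|u_1\|^2$ errors into $\tfrac{\mu}{2}\|u_1\|^2$ gives $\|\delbar_F u\|^2 \geq \tfrac{\mu}{4}\|u_1\|^2 + \tfrac{c_0}{2}\|\theta\|^2\|u_0\|^2 \geq \tfrac{c_0}{2}\|\theta\|^2\|u\|^2$, the required bound.

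I expect the main obstacle to be precisely the vanishing $\delbar^*(\theta^{0,1}\wedge u_0)=0$: without it the cross term $\langle\delbar u_1,\theta^{0,1}\wedge u_0\rangle$ sits at the critical order $\|\theta\|\,\|u_1\|\,\|u_0\|$ and, after Cauchy--Schwarz, would contribute a negative term of the same size $\|\theta\|^2\|u_0\|^2$ as the good term $\|\theta^{0,1}\wedge u_0\|^2$, with no a priori reason for the good term to win. It is exactly the parallelism of harmonic $(p,0)$-forms --- hence Ricci-flatness --- that removes this coupling and decouples the near-kernel from its orthogonal complement; this is also why only a neighbourhood $B$ of $\mathbb{I}_X$ is obtained rather than all of $\mathcal{P}(X)$. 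A secondary point to verify carefully is the uniformity of all constants ($c_0$, $C$, $\mu$, and $\mathsf{d}(\mathbb{I}_X,F)\asymp\|\theta\|$) as $F$ varies, which follows from finite-dimensionality of the harmonic spaces and compactness of the set of directions of $\theta$.
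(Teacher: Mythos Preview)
Your proposal is correct and follows essentially the same route as the paper: trivialise $F$ by a unit-length section to obtain a perturbed operator $\delbar_\rho=\delbar+\sum_j c_j(\rho)\overline{\zeta_j}\wedge$ on $A^{p,0}(X)$, split $u$ into its harmonic part $u_0\in\mathbb{H}^{p,0}$ and its orthogonal complement $u_1$, use the spectral gap for $u_1$, and use parallelism of holomorphic $p$-forms (from Ricci-flatness) to control the $u_0$-piece and kill the dangerous cross term. The paper proves a slightly stronger vanishing than you do: its Proposition~3.3 shows the \emph{entire} cross term $(\delbar_\rho u_1,\delbar_\rho u_0)_{L^2}=0$, i.e.\ both $\langle\delbar u_1,\theta^{0,1}\wedge u_0\rangle=0$ (your vanishing) \emph{and} $\langle\theta^{0,1}\wedge u_1,\theta^{0,1}\wedge u_0\rangle=0$ (since $|\theta^{0,1}|_\omega^2$ is pointwise constant and $u_1\perp u_0$); the second piece is hidden inside your term $\|\theta^{0,1}\wedge u\|^2$, but as it is of order $\|\theta\|^2\|u_0\|\,\|u_1\|$ it can equally well be absorbed by AM--GM, so your outline goes through either way.
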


Note that, as is clear from our proof of this theorem, Theorem \ref{thm:main_2} remains true even after replacing $B$ with ${\rm Pic}^0(X)$ if the following condition holds: $H^{p, 0}(X, F)=0$ holds for any $F\in {\rm Pic}^0(X)\setminus \{\mathbb{I}_X\}$ (see Remark \ref{rmk:prof_of_thm_main_2}). This condition always holds when $p=0$ by Lemma \ref{lem:flat_H0=0}; for $p>0$ see \S \ref{eg:p>0}. 

By combining Theorem \ref{thm:main_1} and a standard argument on the \v{C}ech--Dolbeault correspondence (see \S \ref{section:C-D_corresp}), one has an alternative proof of Ueda's lemma for compact K\"ahler manifolds.

\begin{corollary}[=Ueda's lemma {\cite[Lemma 4]{Ueda}}]\label{cor:main}
Let $X$ be a compact K\"ahler manifold and $\{U_j\}$ be an open covering of $X$ such that $\#\{U_j\}<\infty$, each $U_j$ is Stein, and $\{U_j\}$ trivialises any $F\in\mathcal{P}(X)$. 
Then there exists a constant $K>0$ such that, for any $F\in\mathcal{P}(X)$ and any \v{C}ech $0$-cochain $\mathfrak{f}:=\{(U_j, f_{j})\}\in \check{C}^0(\{U_j\}, \mathcal{O}_X(F))$, the inequality
\[
\mathsf{d}(\mathbb{I}_X, F)\cdot \max_j\sup_{U_j}|f_j|_h\leq K\cdot \max_{j, k}\sup_{U_{jk}}|f_{jk}|_h
\]
holds, where $\{(U_{jk}, f_{jk})\}:=\delta \mathfrak{f}\in \check{C}^1(\{U_j\}, \mathcal{O}_X(F))$ is the \v{C}ech coboundary of $\mathfrak{f}$. 
\end{corollary}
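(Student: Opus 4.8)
The plan is to transport the $L^2$-estimate of Theorem \ref{thm:main_1} to the asserted $L^\infty$-estimate on \v{C}ech coboundaries through the \v{C}ech--Dolbeault correspondence of \S \ref{section:C-D_corresp}, reserving a mean-value argument for the final passage from $L^2$ to $L^\infty$. The case $F=\mathbb{I}_X$ is trivial, since then $\mathsf{d}(\mathbb{I}_X,F)=0$ and the claimed inequality reads $0\leq K\cdot\max_{j,k}\sup_{U_{jk}}|f_{jk}|_h$; so I assume $F\in\mathcal{P}(X)\setminus\{\mathbb{I}_X\}$ throughout. First I would fix a smooth partition of unity $\{\rho_j\}$ subordinate to $\{U_j\}$ and form the global smooth section $s:=\sum_k\rho_k f_k$ of $F$. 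Working in the fixed trivialisation over $U_j$ and using $\sum_k\rho_k\equiv 1$ together with the coboundary relation $f_{jk}=f_k-f_j$, one computes $s=f_j+\sum_k\rho_k f_{jk}$ on $U_j$, whence
\[
v:=\delbar s=\sum_k(\delbar\rho_k)\,f_{jk}
\]
is a globally well-defined $\delbar$-closed $(0,1)$-form with values in $F$ (the $f_{jk}$ being holomorphic).

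By construction $v$ represents the image of the \v{C}ech class $[\delta\mathfrak{f}]$ under the isomorphism $\check{H}^1(\{U_j\},\mathcal{O}_X(F))\cong H^{0,1}(X,F)$ coming from \S \ref{section:C-D_corresp}; since $\delta\mathfrak{f}$ is a coboundary this class vanishes, so $[v]=0$ and Theorem \ref{thm:main_1} applies. It yields a smooth global section $u$ with $\delbar u=v$ and $\|u\|_{L^2}\leq(K/\mathsf{d}(\mathbb{I}_X,F))\|v\|_{L^2}$. Since the $\delbar\rho_k$ are fixed smooth forms and the flat metric $h$ has constant pointwise norm on its flat frame, $\|v\|_{L^2}^2\leq C_0\,(\max_{j,k}\sup_{U_{jk}}|f_{jk}|_h)^2$ for a constant $C_0$ independent of $F$. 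Next I would observe that $\phi:=s-u$ is a global holomorphic section of $F$: as in Lemma \ref{lem:flat_H0=0}, $|\phi|_h^2$ is plurisubharmonic on the compact manifold $X$, hence constant, so $\phi$ either vanishes identically or is nowhere zero and trivialises $F$; the latter is impossible for $F\neq\mathbb{I}_X$, giving $u=s$ and therefore $f_j=u-\sum_k\rho_k f_{jk}$ on each $U_j$.

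The decisive step is the passage from the $L^2$-bound on $u$ to a pointwise bound on $f_j$. Because $f_j$ is holomorphic and $h$ is flat, $|f_j|_h^2$ is plurisubharmonic on $U_j$, so the sub-mean-value inequality over a ball $B(x,r_0)$ of a fixed radius $r_0$ (chosen uniformly by compactness so that such balls centred at points of a slightly shrunk subcover still lie in the corresponding $U_j$) gives
\[
|f_j(x)|_h^2\leq\frac{C_1}{r_0^{2n}}\int_{B(x,r_0)}|f_j|_h^2\,dV_g\leq\frac{2C_1}{r_0^{2n}}\Big(\|u\|_{L^2}^2+\mathrm{Vol}_g(X)\,\big(\max_{j,k}\sup_{U_{jk}}|f_{jk}|_h\big)^2\Big),
\]
where the second inequality uses $f_j=u-\sum_k\rho_k f_{jk}$ together with $|a-b|^2\leq 2|a|^2+2|b|^2$ and $\sum_k\rho_k|f_{jk}|_h\leq\max_{j,k}\sup_{U_{jk}}|f_{jk}|_h$. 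Combining this with $\|u\|_{L^2}^2\leq(K^2C_0/\mathsf{d}(\mathbb{I}_X,F)^2)(\max_{j,k}\sup_{U_{jk}}|f_{jk}|_h)^2$, and using that $\mathsf{d}(\mathbb{I}_X,F)$ is bounded above on the compact space $\mathcal{P}(X)$ — so that the $\mathrm{Vol}_g(X)$ term can be absorbed into the $\mathsf{d}(\mathbb{I}_X,F)^{-2}$ term at the cost of enlarging the constant — yields $\sup_{U_j}|f_j|_h\leq(K'/\mathsf{d}(\mathbb{I}_X,F))\max_{j,k}\sup_{U_{jk}}|f_{jk}|_h$ with $K'$ independent of $F$, which is the assertion.

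I expect the main obstacle to lie precisely in this last step: ensuring that the radius $r_0$, and hence the final constant $K'$, can be chosen uniformly over all $F\in\mathcal{P}(X)\setminus\{\mathbb{I}_X\}$ and over the finitely many charts, and that the distance factor is bookkept so that the blow-up rate $\mathsf{d}(\mathbb{I}_X,F)^{-1}$ is genuinely preserved rather than worsened. The finiteness of the cover and the compactness of $(X,g)$ and of $\mathcal{P}(X)$ are what make all these constants uniform, so the remaining work is to make the comparison between metric balls and Euclidean balls, and the plurisubharmonic mean-value estimate, quantitatively explicit.
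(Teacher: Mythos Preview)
Your argument is correct and follows essentially the same route as the paper: the paper's proof of Corollary~\ref{cor:main} simply invokes Theorem~\ref{thm:main_1} together with Lemma~\ref{lem:CD-corresp} (the \v{C}ech--Dolbeault comparison) and Lemma~\ref{lem:flat_H0=0}, and what you have written is precisely an inline version of the proof of Lemma~\ref{lem:CD-corresp} specialised to flat $h$, with the uniqueness of the $\delbar$-primitive used to identify $u$ with $s$. One small point you leave implicit but the paper spells out: your mean-value step only bounds $|f_j(x)|_h$ for $x$ in a shrunk subcover $U_j^*\Subset U_j$, and to pass to $\sup_{U_j}|f_j|_h$ you need the extra line $|f_j(x)|_h\le |f_k(x)|_h+|f_{jk}(x)|_h$ for any $k$ with $x\in U_k^*$, which costs only an additive $\max_{j,k}\sup_{U_{jk}}|f_{jk}|_h$ and is then absorbed as you already do for the $\mathrm{Vol}_g(X)$ term.
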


Note that $X$ need not be K\"ahler in Ueda's original proof, and that the open cover $\{ U_j \}$ satisfying all the hypotheses above does exist by Lemma \ref{lmufmcv}. 
See also \cite[Appendix A]{GS} for another generalisation of Ueda's lemma, in which estimates for the operator norm of the \v{C}ech coboundary operators with respect to suitable $L^2$-norms are given for a family of open coverings of a manifold. 

In order to prove Theorem \ref{thm:main_1} and \ref{thm:main_2}, we show the inequality (\ref{ineq:laplacian}) for $(p, 0)$-forms with a suitable constant $C$ (when $F\in\mathcal{P}(X)\setminus\{\mathbb{I}_X\}$ is sufficiently close to $\mathbb{I}_X$), since 
\[
\int_X|\delbar u|_{h, g}^2\,dV_g = \int_X\langle \Delta u, u\rangle_{h, g}\,dV_g
\]
holds for $(p, 0)$-forms. 
For this purpose, we construct a suitable $C^\infty$ global frame $\sigma\colon X\to F$ with $|\sigma|_h\equiv 1$ and consider the correspondence
\[
A^{p, q}(X,F)\ni u\mapsto \widehat{u}:=\frac{u}{\sigma}\in A^{p, q} (X), 
\]
where $A^{p, q} (X)$ denotes the space of all the smooth $(p, q)$-forms on $X$. 
Instead of studying the operator $\delbar$ on $F$ directly, we define a {\it perturbed $\delbar$-operator} by considering the conjugate of the operator $\delbar\colon A^{p, 0}(X,F)\to A^{p, 1}(X,F)$ by this correspondence and evaluate the minimum eigenvalue of the corresponding (Witten-type, see e.g.~\cite{Witten}) perturbed Laplace operator. 

\vskip3mm
{\bf Organisation of the paper. }
We recall some background materials and establish various preliminary results in \S \ref{section:prelim}. We start by recalling some well-known facts for flat holomorphic line bundles over compact K\"ahler manifolds in \S \ref{section:flathlb}, 
the distance $\mathsf{d}$ on $\mathcal{P}(X)$ in \S \ref{section:ext_of_invariant_dist}, 
the \v{C}ech--Dolbeault correspondence in \S \ref{section:C-D_corresp}, and the Albanese varieties in \S \ref{section:alb}. The materials in \S \ref{section:elrtori} are elementary but necessary for the results in \S \ref{section:dboflbpdb}; we believe that many results therein are new, although some of them may well be known to the experts or implicit in the literature. The end product of the results in \S \ref{section:elrtori} and \S \ref{section:dboflbpdb} is the perturbed $\delbar$-operators, which turn out to be very important in analysing the $\delbar$-operators on flat line bundles that are ``close'' to the trivial bundle. Elementary analytic properties of the perturbed $\delbar$-operators are established in \S \ref{scgaopdop} by applying well-known techniques in geometric analysis and PDE theory. All these results are applied to prove Theorems \ref{thm:main_1}, \ref{thm:main_2}, and Corollary \ref{cor:main} in \S \ref{section:proofmr}. Finally, in \S \ref{section:excex}, we recover Theorem \ref{thm:main_1} for elliptic curves by a more concrete argument, and provide counterexamples to naive generalisations of Theorem \ref{thm:main_2}. 
\vskip3mm
{\bf Notation. }
We fix our notational conventions as follows. 
\begin{itemize}
	\item $A^{p,q} (X)$ (resp.~$A^{p,q} (X,F)$) denotes the set of smooth differential $(p,q)$-forms (resp.~$F$-valued smooth $(p,q)$-forms) on $X$.
	\item $T_X$ (resp.~$T_X^*$) denotes the holomorphic tangent (resp.~cotangent) sheaf of $X$, which we identify with the holomorphic vector bundle $T^{1,0} X$ (resp.~$(T^*X)^{1,0}$). We write $\Omega_X^p := \bigwedge^p T^*_X$ for the sheaf of holomorphic $p$-forms on $X$.
	\item $\omega$ stands for a fixed K\"ahler form on $X$, which we identify with the (fixed) K\"ahler metric $g$. For $\alpha, \beta \in A^{p,q} (X)$, we write $\langle \alpha , \beta \rangle_{\omega} (x)$ for the value of the $\omega$-metric inner product of $\alpha, \beta$ at $x \in X$. We may also write $\langle \alpha , \beta \rangle_{\omega}$ when the point $x \in X$ is obvious from the context. The (pointwise) norm is defined by $\Vert \alpha \Vert_{\omega}^2 := \langle \alpha , \alpha \rangle_{\omega}$. The $L^2$-inner product is defined, as usual, by $(\alpha , \beta )_{L^2} := \int_X \langle \alpha , \beta \rangle_{\omega} \,dV_\omega= \int_X \langle \alpha , \beta \rangle_{\omega} \omega^n/n!$.
	\item Throughout the paper, we work with a fixed basis for $H^0 (X , \Omega_X^1)$ which we assume is orthonormal with respect to the above $L^2$-inner product; see \S \ref{section:alb} for more details.
	\item $\Delta:= \delbar \delbar^* + \delbar^* \delbar$ stands for the $\delbar$-Laplacian with respect to $\omega$. The set of harmonic $(p,q)$-forms is denoted by $\mathbb{H}^{p,q}$.
	\item $\mathcal{P}(X)$ denotes the set of all flat holomorphic line bundles on $X$. We write $\mathbb{I}_X \in \mathcal{P}(X)$ for the holomorphically trivial bundle.
	\item ${\rm Pic}^0(X)$ denotes the set of all topologically trivial holomorphic line bundles on $X$. 
\end{itemize}
\vskip3mm
{\bf Acknowledgement. }
The first author is supported by JSPS KAKENHI Grant Number 19K14524. 
The second author is supported by JSPS KAKENHI Grant Number 20K14313.


\section{Preliminaries}\label{section:prelim}

\subsection{Flat holomorphic line bundles}\label{section:flathlb}

Let $X$ be a compact K\"ahler manifold; many results in this section hold more generally for a compact complex manifold, but in this paper we focus on K\"ahler manifolds; see \cite[Sections 1.2 and 1.4]{Kob} for more details. We say that a holomorphic line bundle $F$ on $X$ is {\rm flat} if $F$ admits a Hermitian metric $h$ whose Chern curvature is identically zero. This is equivalent to $F$ being in the image of the morphism $i\colon H^1(X, {\rm U}(1))\to {\rm Pic}(X)=H^1(X, \mathcal{O}_X^*)$ which is induced by the natural injection ${\rm U}(1)\to \mathcal{O}_X^*$ from the sheaf of locally constant ${\rm U}(1)$-valued functions to the sheaf of nowhere vanishing holomorphic functions (${\rm U}(1):=\{t\in\mathbb{C}\mid |t|=1\}$); this condition is equivalent to the existence of an open covering $\{U_j\}$ of $X$ and a local trivialisation $e_j$ of $F$ on each $U_j$ such that $|e_j/e_k|\equiv 1$ holds on each $U_j\cap U_k$, and hence in turn equivalent to the vanishing of the Chern curvature of $h$ (defined by $|e_j|_h\equiv 1$) by \cite[Proposition 1.4.21]{Kob} (noting that the flat $h$-connection $D$ therein can be taken to be the Chern connection when $F$ is holomorphic; see the proof of \cite[Proposition 1.2.5]{Kob}).

\begin{definition}
	We write $\mathcal{P}(X)$ for the set of all equivalence classes of flat holomorphic line bundles over $X$, where the equivalence relation is given by isomorphisms of holomorphic line bundles.
\end{definition}

When $X$ is compact K\"ahler, it is known that the map $i\colon H^1(X, {\rm U}(1))\to H^1(X, \mathcal{O}_X^*)$ is injective and that the image of $i$ coincides with the kernel of the first Chern-class map $c_1\colon H^1(X, \mathcal{O}_X^*)\to H^2(X, \mathbb{R})$ (a theorem of Kashiwara, see \cite[\S 1]{Ueda} for example), which implies that one can naturally identify $\mathcal{P}(X)$ with $H^1(X, {\rm U}(1))$. 
The following lemma explains the relation between $\mathcal{P}(X)$ and the set ${\rm Pic}^0(X)$ of all equivalence classes of topologically trivial holomorphic line bundles over $X$, where the equivalence relation is again given by isomorphisms of holomorphic line bundles. 
\begin{lemma}\label{lem:kahler_pic0_p_relation}
Assume that $X$ is compact K\"ahler. 
Then, as a subset of ${\rm Pic}(X)$, 
${\rm Pic}^0(X)$ is the connected component of $\mathcal{P}(X)$ which contains $\mathbb{I}_X$. 
Moreover, 
$\mathcal{P}(X)$ consists of finitely many connected components, and each component is homeomorphic to ${\rm Pic}^0(X)$. 
The set $\mathcal{P}(X)$ coincides with ${\rm Pic}^0(X)$ when $H^2(X, \mathbb{Z})$ is torsion-free. 
\end{lemma}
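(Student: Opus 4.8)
The plan is to deduce everything from the integral first Chern class map together with the identification $\mathcal{P}(X) = \ker(c_1 \colon {\rm Pic}(X) \to H^2(X,\mathbb{R}))$ recalled above (Kashiwara's theorem). Writing $T := \mathrm{Tor}\, H^2(X,\mathbb{Z})$, I first note that the kernel of the coefficient map $H^2(X,\mathbb{Z}) \to H^2(X,\mathbb{R})$ is exactly $T$, so that a flat bundle is precisely one whose \emph{integral} class $c_1$ is torsion. Restricting the integral Chern class therefore yields a homomorphism $c_1 \colon \mathcal{P}(X) \to T$ with kernel $\{L : c_1(L) = 0\} = {\rm Pic}^0(X)$, and the whole lemma reduces to four assertions: (i) ${\rm Pic}^0(X)$ is connected; (ii) $c_1 \colon \mathcal{P}(X) \to T$ is surjective; (iii) $T$ is finite; and (iv) the fibres of this map are exactly the connected components of $\mathcal{P}(X)$, each homeomorphic to ${\rm Pic}^0(X)$.

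For (i) and (iii) I would use the exponential sequence $0 \to \mathbb{Z} \to \mathcal{O}_X \to \mathcal{O}_X^* \to 0$, whose long exact sequence gives ${\rm Pic}^0(X) \cong H^1(X,\mathcal{O}_X)/\mathrm{im}\, H^1(X,\mathbb{Z})$. Hodge theory on the compact K\"ahler manifold $X$ shows that the projection $H^1(X,\mathbb{R}) \to H^{0,1} = H^1(X,\mathcal{O}_X)$ is an $\mathbb{R}$-linear isomorphism, so the torsion-free lattice $H^1(X,\mathbb{Z})$ maps onto a full-rank lattice and hence ${\rm Pic}^0(X)$ is a compact connected real torus; finiteness of $T$ is clear since $H^2(X,\mathbb{Z})$ is finitely generated. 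For (ii), the same long exact sequence identifies $\mathrm{im}(c_1 \colon {\rm Pic}(X) \to H^2(X,\mathbb{Z}))$ with $\ker(H^2(X,\mathbb{Z}) \to H^2(X,\mathcal{O}_X))$; as $H^2(X,\mathcal{O}_X)$ is a complex vector space and thus torsion-free, every torsion class maps to zero there and so lies in this image, while any preimage has vanishing real Chern class and therefore lies in $\mathcal{P}(X)$. This produces the short exact sequence $0 \to {\rm Pic}^0(X) \to \mathcal{P}(X) \xrightarrow{c_1} T \to 0$.

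The topological content (iv) is handled most transparently through the isomorphism $\mathcal{P}(X) \cong H^1(X,{\rm U}(1))$. I would use the coefficient sequence $0 \to \mathbb{Z} \to \mathbb{R} \to {\rm U}(1) \to 0$ together with universal coefficients to write $H^1(X,{\rm U}(1)) \cong \mathrm{Hom}(H_1(X,\mathbb{Z}),{\rm U}(1)) \cong {\rm U}(1)^{b_1} \times T$ as topological groups, using $\mathrm{Tor}\, H_1(X,\mathbb{Z}) \cong T$; this exhibits the identity component as the torus ${\rm U}(1)^{b_1}$ and the component group as the finite discrete group $T$. Comparing with the $c_1$-sequence identifies this identity component with ${\rm Pic}^0(X)$, so that $c_1 \colon \mathcal{P}(X) \to T$ is locally constant, each fibre is a coset of ${\rm Pic}^0(X)$, and tensoring by a fixed flat bundle is a homeomorphism of $\mathcal{P}(X)$ carrying ${\rm Pic}^0(X)$ onto that coset. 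Each fibre is thus connected as well as open-and-closed, so the fibres are precisely the $|T|$ connected components; in particular ${\rm Pic}^0(X)$ is the component of $\mathbb{I}_X$, and $T = 0$ (equivalently $\mathcal{P}(X) = {\rm Pic}^0(X)$) exactly when $H^2(X,\mathbb{Z})$ is torsion-free.

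The hard part will be the two genuinely non-formal inputs: the connectedness of ${\rm Pic}^0(X)$, which rests on the K\"ahler--Hodge fact that $H^1(X,\mathbb{R})$ projects isomorphically onto $H^{0,1}$ so that $H^1(X,\mathbb{Z})$ embeds as a full lattice, and the precise matching of the algebraic coset decomposition coming from the $c_1$-sequence with the topological decomposition of $H^1(X,{\rm U}(1)) \cong {\rm U}(1)^{b_1} \times T$ into connected components. Everything else is formal homological algebra applied to the two exponential sequences.
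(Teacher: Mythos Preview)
Your proposal is correct and follows essentially the same strategy as the paper: both arguments establish the short exact sequence $0 \to {\rm Pic}^0(X) \to \mathcal{P}(X) \to G \to 0$ with $G$ a finite abelian group identified with the torsion in $H^2(X,\mathbb{Z})$, and deduce the lemma from the connectedness of ${\rm Pic}^0(X)$.

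The main difference is one of packaging. The paper reaches the exact sequence by passing through ${\rm NS}(X)$ and applying the structure theorem for finitely generated abelian groups to isolate its torsion part $G$; it then appeals to the topology on ${\rm Pic}(X)$ already declared (Euclidean on ${\rm Pic}^0(X)$, discrete on ${\rm NS}(X)$) to conclude that the cosets are the components. You instead map $\mathcal{P}(X)$ directly to $T=\mathrm{Tor}\,H^2(X,\mathbb{Z})$, prove surjectivity explicitly via the exponential long exact sequence (using that $H^2(X,\mathcal{O}_X)$ is torsion-free), and handle the topology through the identification $\mathcal{P}(X)\cong H^1(X,{\rm U}(1))\cong \mathrm{Hom}(H_1(X,\mathbb{Z}),{\rm U}(1))\cong {\rm U}(1)^{b_1}\times T$. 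Your route is more self-contained and makes the topological claims (identity component, coset decomposition) explicit rather than referring back to a declared topology; the paper's route is shorter because it quotes the torus structure of ${\rm Pic}^0(X)$ as a standard fact. Both are entirely sound, and your identification $\mathrm{Tor}\,H_1(X,\mathbb{Z})\cong \mathrm{Tor}\,H^2(X,\mathbb{Z})$ via universal coefficients is exactly the same isomorphism the paper exploits later in the proof of Proposition~\ref{prop:alb}.
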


In this lemma and henceforth, we regard ${\rm Pic}(X)$ as a topological group. 
The topology of ${\rm Pic}(X)$ is defined by considering the Euclidean topology for ${\rm Pic}^0(X)$ and the discrete topology for the N\'eron--Severi group ${\rm NS}(X)$ of $X$ (recall the standard fact that ${\rm Pic}^0(X)$ is isomorphic to the $2d$-dimensional torus $({\rm U}(1))^{2d}$, where $d = {\rm dim}H^1(X, \mathcal{O}_X)$, and the exact sequence $0\to {\rm Pic}^0(X)\to {\rm Pic}(X)\to {\rm NS}(X)\to 0$ of groups). 

\begin{proof}[Proof of Lemma \ref{lem:kahler_pic0_p_relation}]
As is clear from the commutativity of the following diagram, the equations ${\rm Pic}^0(X) = {\rm Ker}\,c_1^\mathbb{Z}$ and $\mathcal{P}(X) = {\rm Ker}\,c_1$ imply that ${\rm Pic}^0(X)$ is a topological subgroup of $\mathcal{P}(X)$, where $c_1^\mathbb{Z}\colon {\rm Pic}(X)\to H^2(X, \mathbb{Z})$ is the first Chern-class map (here we employ an unconventional notation for denoting the first Chern-class map in order to avoid the confusion with the map $c_1\colon {\rm Pic}(X)\to H^2(X, \mathbb{R})$ above). 
\begin{displaymath}
\xymatrixcolsep{4pc}\xymatrixrowsep{4pc}\xymatrix{
{\rm Pic}(X) \ar@{->}[r]^-{c_1^{\mathbb{Z}}}  \ar@{->}[rrd]_-{c_1} & {\rm NS}(X) \ar@{->}[r]^-{\rm inclusion} & H^2(X, \mathbb{Z}) \ar@{->}[d]^-{\rm natural}  \\
& & H^2(X, \mathbb{R}) 
} 
\end{displaymath}

${\rm Pic}^0(X)$ is clearly path connected since it is isomorphic to $({\rm U}(1))^{2d}$ endowed with the Euclidean topology, and hence it is the identity component of $\mathcal{P} (X)$ by recalling the topology of ${\rm Pic} (X)$ as defined above.

As ${\rm NS}(X)$ is a finitely generated abelian group, it follows from the structure theorem that there exists a finite abelian group $G$ such that ${\rm NS}(X)\cong \mathbb{Z}^r\oplus G$, where $r$ is a non-negative integer (the Picard number of $X$). 
Note that the kernel of the natural map ${\rm NS}(X)\to H^2(X, \mathbb{R})$ coincides with $G\subset {\rm NS}(X)$, since $G$ is the torsion subgroup of ${\rm NS}(X)$. 
Thus one has the exact sequence 
\begin{equation}\label{eq:exact_seq_pic0_pic_g}
0\to {\rm Pic}^0(X)\to \mathcal{P}(X)\to G\to 0
\end{equation} 
of groups, from which the lemma follows (note that $G=0$ if $H^2(X, \mathbb{Z})$ is torsion-free). 
\end{proof}

Note that the above proof shows that we have 
\begin{equation} \label{eqpxdcpdcc}
	\mathcal{P}(X) = {\rm Pic}^0(X) \oplus \left( \bigoplus_{t \in G\setminus\{0\}} {\rm Pic}^t (X) \right)
\end{equation}
where we identify $G$ with a finite set in $\mathcal{P}(X)$ by choosing a representative in the preimage of the map $\mathcal{P}(X)\to G\to 0$ in (\ref{eq:exact_seq_pic0_pic_g}) and write
\begin{equation*}
	 {\rm Pic}^t (X) :=  t \oplus {\rm Pic}^0(X)
\end{equation*}
for each $t \in G \subset \mathcal{P} (X)$.

The following lemma is fundamental.
\begin{lemma}\label{lem:flat_H0=0}
For $F\in \mathcal{P}(X)$, 
\[
H^0(X, F) = \begin{cases}
\mathbb{C} & \text{if $F$ is holomorphically trivial,} \\
0 & \text{otherwise} 
\end{cases}
\]
holds. 
\end{lemma}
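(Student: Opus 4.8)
The plan is to prove that a nontrivial flat holomorphic line bundle $F$ has no nonzero global holomorphic sections, while the trivial bundle $\mathbb{I}_X$ has exactly the constants. The trivial case is immediate: $H^0(X, \mathbb{I}_X) = H^0(X, \mathcal{O}_X) = \mathbb{C}$ since $X$ is compact and connected. So the substance is the nontrivial case.

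First I would fix a flat metric $h$ on $F$, so that $\sqrt{-1}\Theta_h \equiv 0$, and take any $s \in H^0(X, F)$, i.e.\ a holomorphic section with $\delbar s = 0$. The key idea is to study the nonnegative function $|s|_h^2$ on $X$ and show it is constant by a maximum-principle / subharmonicity argument. Concretely, since $h$ is flat, the Chern connection $D = D' + \delbar$ has curvature zero, and a local computation gives
\[
\sqrt{-1}\,\partial\delbar \log |s|_h^2 = 0
\]
away from the zero locus of $s$, reflecting that $\log|s|_h^2$ is pluriharmonic there. More robustly, one computes on all of $X$ the current identity $\sqrt{-1}\,\partial\delbar |s|_h^2 = \sqrt{-1}\, D's \wedge \overline{D's}\, h \geq 0$ (using $\delbar s = 0$ and flatness to kill the curvature term), so $|s|_h^2$ is plurisubharmonic; on a compact \kah manifold a plurisubharmonic function is constant. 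Hence $|s|_h^2 \equiv c$ for some constant $c \geq 0$, and feeding this back into the identity forces $D's = 0$, so $s$ is parallel for the flat connection.

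Now I would conclude using flatness in the local-constant form recalled in \S\ref{section:flathlb}: choose the trivialising cover $\{U_j\}$ with frames $e_j$ satisfying $|e_j|_h \equiv 1$ and locally constant transition functions $g_{jk} \in \mathrm{U}(1)$. Writing $s = f_j e_j$ locally with $f_j$ holomorphic, the condition $D's = 0$ together with $|e_j|_h \equiv 1$ forces each $f_j$ to be a (nonzero, if $s \neq 0$) constant; the transition rule $f_j = g_{jk} f_k$ then shows all the $g_{jk}$ equal the ratios of these constants, exhibiting $F$ as holomorphically trivial via the global frame $\sigma = f_j^{-1} e_j$. This contradicts the assumption that $F$ is nontrivial, so $s = 0$ and $H^0(X, F) = 0$.

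The main obstacle is making the plurisubharmonicity argument clean across the zero set of $s$, where $\log|s|_h^2$ has singularities; I expect to sidestep this by working directly with $|s|_h^2$ (smooth everywhere) rather than its logarithm, and invoking the compact maximum principle. An equally viable alternative, which I would mention, is the harmonic-theoretic route: equip $F$ with the flat Hermitian metric, note that a holomorphic section is $\delbar$-harmonic, and use the Bochner--Kodaira identity for the flat metric — the curvature term vanishes, so $\Delta = \nabla^*\nabla$ on sections and any harmonic section is parallel, after which the same local-constant argument applies. Either way the crux is that flatness collapses the curvature term and parallelism of $s$ is incompatible with nontriviality of $F$.
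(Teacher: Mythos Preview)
Your proposal is correct and follows essentially the same approach as the paper: the paper's proof simply notes that for a flat metric $h$ and a global holomorphic section $f$, the function $\log|f|_h$ is plurisubharmonic, and then invokes the maximum principle. You have spelled out in more detail the step from ``$|s|_h$ is constant'' to ``$F$ is holomorphically trivial'' (via parallel sections and local frames), and you have also noted the alternative of working with $|s|_h^2$ to sidestep the singularities of the logarithm along the zero set; both are reasonable elaborations of the same underlying argument.
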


\begin{proof}
Take a flat metric $h$ of $F$. 
Then, for a global holomorphic section $f$ of $F$, the function $\log |f|_h$ is plurisubharmonic, from which the assertion holds by applying the maximum principle. 
\end{proof}

A similar argument also proves the following lemma, which allows us to discuss flat holomorphic line bundles without a specific reference to flat metrics; in particular, the choice of the flat metric $h$ in Theorems \ref{thm:main_1}, \ref{thm:main_2}, and Corollary \ref{cor:main} is of no significance. 

\begin{lemma} \label{lmuqflmfbf}
	If $h_1$ and $h_2$ are two flat Hermitian metrics on a holomorphic line bundle $F$ over a compact K\"ahler manifold, there exists $c \in \mathbb{R}$ such that $e^c h_1 = h_2$.
\end{lemma}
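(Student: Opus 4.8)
The plan is to prove Lemma~\ref{lmuqflmfbf} by comparing the two flat metrics via their ratio, which is a globally defined positive function on the compact manifold $X$, and then showing this function must be constant. Since $h_1$ and $h_2$ are both Hermitian metrics on the same holomorphic line bundle $F$, their ratio defines a smooth positive function: concretely, if $e$ is any local holomorphic frame of $F$ on an open set $U$, then $|e|_{h_1}^2$ and $|e|_{h_2}^2$ are smooth positive functions on $U$, and their ratio $\rho := |e|_{h_2}^2 / |e|_{h_1}^2$ is independent of the choice of frame $e$ (the transition factors cancel), hence glues to a well-defined smooth positive function $\rho \colon X \to \mathbb{R}_{>0}$. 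Writing $\rho = e^{\psi}$ for a smooth real function $\psi$, it suffices to show $\psi$ is constant.

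The key step is a curvature computation. I would recall that for a holomorphic line bundle the Chern curvature of a metric $h$ is locally given by $\sqrt{-1}\,\Theta_h = -\sqrt{-1}\,\partial\overline{\partial} \log |e|_h^2$ for a local holomorphic frame $e$, so the two curvatures are related by
\[
\sqrt{-1}\,\Theta_{h_2} - \sqrt{-1}\,\Theta_{h_1} = -\sqrt{-1}\,\partial\overline{\partial} \log \rho = -\sqrt{-1}\,\partial\overline{\partial}\,\psi.
\]
Because both metrics are flat, both curvature terms on the left vanish, so $\partial\overline{\partial}\,\psi \equiv 0$ on $X$. The problem is thus reduced to showing that a smooth real function $\psi$ on a compact \kah manifold satisfying $\partial\overline{\partial}\,\psi = 0$ must be constant; then setting $c := \psi$ (a real constant) yields $e^c h_1 = h_2$ as desired.

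For this final reduction I would invoke the standard fact, exactly as used in the proof of Lemma~\ref{lem:flat_H0=0}, that $\partial\overline{\partial}\,\psi = 0$ forces $\psi$ to be pluriharmonic, and a real pluriharmonic function on a compact \kah (indeed compact complex) manifold is constant by the maximum principle. Slightly more precisely: the condition $\partial\overline{\partial}\,\psi = 0$ means $\psi$ is both plurisubharmonic and plurisuperharmonic, so $\psi$ attains its maximum on the compact manifold $X$, and the maximum principle for plurisubharmonic functions forces $\psi$ to be locally constant, hence constant on the connected manifold $X$. (Equivalently, on a compact \kah manifold one may note that $\partial\overline{\partial}\,\psi = 0$ implies $\Delta_g \psi$ integrates against $\omega^{n-1}$ to zero in a way that pins down harmonicity, and a harmonic function on a compact manifold is constant; but the maximum principle argument is cleaner and parallels the proof already given for Lemma~\ref{lem:flat_H0=0}.)

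I do not expect any serious obstacle here, as the lemma is elementary and the whole argument mirrors the one-line proof of Lemma~\ref{lem:flat_H0=0}. The only point requiring a little care is the well-definedness of the global ratio function $\rho$ and the sign convention in the curvature formula; both are routine. The \kah hypothesis is not essential for this particular statement (compactness and connectedness suffice together with the maximum principle), which is consistent with the lemma being stated for a compact \kah manifold merely to match the ambient setting of the paper.
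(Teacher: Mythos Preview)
Your proposal is correct and follows essentially the same approach as the paper: the paper's one-line proof observes that $\log(h_1/h_2)$ is a pluriharmonic function on a compact complex manifold and hence a global real constant, which is exactly your argument written out in more detail.
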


\begin{proof}
	We observe that $\log(h_1 / h_2)$ is a nowhere vanishing pluriharmonic function on a compact complex manifold, and hence must be a global real constant.
\end{proof}



When $F$ is holomorphic we have a canonical holomorphic structure $\delbar_F$ (resp.~$\delbar_{\mathbb{I}_X}$) on $F$ (resp.~$\mathbb{I}_X$) given by the $\delbar$-operator of the underlying complex manifold. We observe the following fact.

\begin{lemma} \label{lmhlstfdbf}
Suppose that $F$ is a $C^{\infty}$-complex line bundle over a compact K\"ahler manifold $X$. $F$ is trivial as a $C^{\infty}$-complex line bundle if and only if there exists a $C^{\infty}$-bundle isomorphism $f \colon F \isom \mathbb{I}_X$.

Moreover, the following hold if $F$ is a flat holomorphic line bundle.
\begin{enumerate}
	\item $F$ is holomorphically trivial if and only if there exists a $C^{\infty}$-bundle isomorphism $f \colon F \isom \mathbb{I}_X$ such that $\delbar_{\mathbb{I}_X} = (f^{-1})^* \circ \delbar_F \circ f^*$.
	\item If $F$ is holomorphically non-trivial, $(f^{-1})^* \circ \delbar_F \circ f^*$ defines an integrable partial connection which does not equal $\delbar_{\mathbb{I}_X}$, and hence a non-canonical holomorphic structure, on $\mathbb{I}_X$.
\end{enumerate}

\end{lemma}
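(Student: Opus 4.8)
The first assertion is essentially a matter of unwinding definitions, and I would dispose of it at the outset: the $C^{\infty}$-complex line bundle underlying $\mathbb{I}_X$ is the trivial bundle $X \times \mathbb{C}$, so to say that $F$ is trivial as a smooth complex line bundle is exactly to say that $F$ is isomorphic, as a smooth complex line bundle, to this underlying bundle of $\mathbb{I}_X$, i.e.~that a $C^{\infty}$-bundle isomorphism $f \colon F \isom \mathbb{I}_X$ exists. With this recorded, I would concentrate on the two holomorphic statements.

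The plan for those is to phrase everything in terms of Dolbeault operators (integrable partial connections). The holomorphic structure on the flat bundle $F$ is recorded by $\delbar_F \colon A^{0,0}(X,F) \to A^{0,1}(X,F)$, which obeys the Leibniz rule and the integrability condition $\delbar_F \circ \delbar_F = 0$. Given a $C^{\infty}$-bundle isomorphism $f \colon F \isom \mathbb{I}_X$, the induced isomorphisms on sections and on $(0,1)$-forms (written $f^*$ and $(f^{-1})^*$) allow one to form the conjugate operator $\Phi := (f^{-1})^* \circ \delbar_F \circ f^* \colon A^{0,0}(X,\mathbb{I}_X) \to A^{0,1}(X,\mathbb{I}_X)$. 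I would first check the (purely formal) point that $\Phi$ is again a partial connection satisfying the Leibniz rule, and that it inherits integrability $\Phi \circ \Phi = 0$ from $\delbar_F$, since conjugation by a bundle isomorphism is functorial; concretely, $(\mathbb{I}_X, \Phi)$ is \emph{by construction} the holomorphic line bundle obtained by transporting the holomorphic structure of $(F, \delbar_F)$ along $f$, so it is a genuine holomorphic structure with no appeal to Newlander--Nirenberg needed. Since any two partial connections on the trivial bundle differ by a global $(0,1)$-form, one has $\Phi = \delbar_{\mathbb{I}_X} + \alpha$ with $\alpha \in A^{0,1}(X)$, and integrability forces $\delbar \alpha = 0$.

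With this in hand, statement (1) reduces to the standard fact that a smooth bundle isomorphism is holomorphic precisely when it intertwines the two $\delbar$-operators. For the forward direction, holomorphic triviality of $F$ supplies a holomorphic isomorphism $f$, which by definition carries holomorphic sections to holomorphic sections; hence $f$ intertwines $\delbar_F$ and $\delbar_{\mathbb{I}_X}$, so $\Phi = \delbar_{\mathbb{I}_X}$ (equivalently $\alpha = 0$). Conversely, if $\Phi = \delbar_{\mathbb{I}_X}$ for some smooth $f$, then $f$ intertwines the two Dolbeault operators and is therefore a holomorphic bundle isomorphism, so $F$ is holomorphically trivial. Statement (2) is then immediate: for holomorphically non-trivial $F$ the conjugate $\Phi$ is, by the previous paragraph, always an integrable partial connection, yet it cannot equal $\delbar_{\mathbb{I}_X}$, for otherwise $f$ would be a holomorphic isomorphism by the converse in (1), forcing $F$ to be holomorphically trivial; thus $\Phi$ defines a holomorphic structure on $\mathbb{I}_X$ distinct from the canonical one. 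The only genuinely non-formal ingredient, and the step I would state most carefully, is the equivalence \emph{a smooth bundle isomorphism is holomorphic if and only if it intertwines the $\delbar$-operators}; everything else is bookkeeping about how $f$ conjugates a first-order operator and about the inheritance of integrability under this conjugation.
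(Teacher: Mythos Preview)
Your proposal is correct and supplies precisely the details the paper omits: the paper states this lemma without proof, merely pointing to \cite[Theorem 2.1.53]{DK} for the background correspondence between integrable partial connections and holomorphic structures, which is exactly the ``only genuinely non-formal ingredient'' you identify. Your argument is the standard one and matches the spirit of that reference.
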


See \cite[Theorem 2.1.53]{DK} for more details on the partial connections and holomorphic structures. 

\subsection{Extension of the Euclidean distance on ${\rm Pic}^0(X)$ to $\mathcal{P}(X)$}\label{section:ext_of_invariant_dist}

As is mentioned in \S 1, we attach the Euclidean distance $\mathsf{d}_0$ to the torus ${\rm Pic}^0(X)$, and extend it to define a distance on $\mathcal{P}(X)$. 
Note that $\mathsf{d}_0$ is an invariant distance in the sense of Ueda \cite[\S 4.1]{Ueda}. 
In this section, we show the following lemma in order to ensure the existence of an extension of $\mathsf{d}_0$ to $\mathcal{P}(X)$ which is also an invariant distance. 

\begin{lemma}
	Suppose that $\mathsf{d}_0$ is an invariant distance on ${\rm Pic}^0(X)$ with respect to which ${\rm Pic}^0(X)$ is compact. Then there exists an invariant distance $\mathsf{d}$ on $\mathcal{P}(X)$ such that its restriction to ${\rm Pic}^0(X)$ agrees with $\mathsf{d}_0$ and $\mathsf{d}$ makes $\mathcal{P}(X)$ compact.
\end{lemma}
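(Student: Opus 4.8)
The plan is to reduce the construction of an invariant distance to that of a group norm. On an abelian topological group $\Gamma$, any function $N\colon \Gamma\to\mathbb{R}_{\geq 0}$ satisfying $N(g)=0\iff g=0$, $N(-g)=N(g)$, and subadditivity $N(g+h)\leq N(g)+N(h)$ gives rise to a translation-invariant distance $\mathsf{d}(a,b):=N(a-b)$, and conversely $g\mapsto\mathsf{d}_0(\mathbb{I}_X,g)$ recovers such a norm from an invariant distance. Thus, writing $N_0(g):=\mathsf{d}_0(\mathbb{I}_X,g)$ for the \emph{norm} attached to $\mathsf{d}_0$ on ${\rm Pic}^0(X)$, it suffices to extend $N_0$ to a norm $N$ on $\mathcal{P}(X)$ whose associated metric induces the given topology of $\mathcal{P}(X)$; the restriction of $\mathsf{d}$ to ${\rm Pic}^0(X)$ will then automatically coincide with $\mathsf{d}_0$, since for $a,b\in{\rm Pic}^0(X)$ one has $a-b\in{\rm Pic}^0(X)$ and $N(a-b)=N_0(a-b)=\mathsf{d}_0(a,b)$ by invariance.

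First I would invoke the decomposition $\mathcal{P}(X)={\rm Pic}^0(X)\oplus\bigl(\bigoplus_{t\in G\setminus\{0\}}{\rm Pic}^t(X)\bigr)$ from (\ref{eqpxdcpdcc}), which exhibits $\mathcal{P}(X)$ as a finite disjoint union of cosets of the subgroup ${\rm Pic}^0(X)$, each open and closed. Since ${\rm Pic}^0(X)$ is $\mathsf{d}_0$-compact, its diameter $D:=\sup_{g\in{\rm Pic}^0(X)}N_0(g)$ is finite. I then fix any constant $C>0$ with $C\geq D$ and define $N(g):=N_0(g)$ for $g\in{\rm Pic}^0(X)$ and $N(g):=C$ for $g\in\mathcal{P}(X)\setminus{\rm Pic}^0(X)$.

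The key step is to verify the three norm axioms. Positive-definiteness and symmetry are immediate from those of $N_0$ together with the fact that ${\rm Pic}^0(X)$ is a subgroup, so $-g$ lies in ${\rm Pic}^0(X)$ if and only if $g$ does. The one substantive case of subadditivity is when $g,h\in\mathcal{P}(X)\setminus{\rm Pic}^0(X)$ while $g+h\in{\rm Pic}^0(X)$: here $N(g+h)=N_0(g+h)\leq D\leq C\leq C+C=N(g)+N(h)$, so the calibration of $C$ against the diameter $D$ (finite precisely because ${\rm Pic}^0(X)$ is compact) is exactly what makes the inequality hold. The remaining cases, namely both arguments in ${\rm Pic}^0(X)$ (using the triangle inequality for $\mathsf{d}_0$) or exactly one argument outside (in which the sum is forced outside, so $N(g+h)=C$), are routine.

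Finally I would check that $\mathsf{d}(a,b)=N(a-b)$ induces the original topology, whence compactness follows. Points in a common coset satisfy $a-b\in{\rm Pic}^0(X)$, so $\mathsf{d}$ restricts to a translate of $\mathsf{d}_0$ on each component and reproduces the Euclidean topology there, whereas points in distinct cosets are at $\mathsf{d}$-distance exactly $C>0$, so the cosets are open and closed for $\mathsf{d}$, matching the discrete topology on $G$. Hence $(\mathcal{P}(X),\mathsf{d})$ carries the original topology of $\mathcal{P}(X)$, which is compact by Lemma \ref{lem:kahler_pic0_p_relation} (finitely many components, each homeomorphic to the torus ${\rm Pic}^0(X)$). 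I expect the only genuinely delicate point to be this calibration of $C$ in the subadditivity check; all else is bookkeeping. As a conceptual alternative one could note that ${\rm Pic}^0(X)\cong({\rm U}(1))^{2d}$ is divisible, hence an injective $\mathbb{Z}$-module, so the exact sequence (\ref{eq:exact_seq_pic0_pic_g}) splits topologically and $\mathcal{P}(X)\cong{\rm Pic}^0(X)\times G$; one then equips the product with $N_0$ plus the discrete norm on $G$. The direct construction above, however, avoids the splitting entirely.
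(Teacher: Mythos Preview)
Your proof is correct and takes a genuinely different, more economical route than the paper's. The paper fixes coset representatives $G \subset \mathcal{P}(X)$, defines an auxiliary function $\widetilde{\mathsf{d}}$ that measures distance within a coset by $\mathsf{d}_0$ and across cosets by $\mathsf{d}_t(x,t)+\mathsf{d}_s(s,y)+\delta$ (with $\delta=\mathrm{diam}_{\mathsf{d}_0}(\mathrm{Pic}^0(X))$), and then forces translation invariance by setting $\mathsf{d}(x,y):=\inf_{z}\widetilde{\mathsf{d}}(x-z,y-z)$. The triangle inequality then has to be checked case by case for the various configurations of $t,s,u\in G$. Your approach bypasses both the choice of representatives and the symmetrisation step by working directly with the group norm $N$: the single calibration $C\ge D$ handles all cross-coset subadditivity at once, and translation invariance is automatic from $\mathsf{d}(a,b)=N(a-b)$. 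The price is that your metric is coarser between components (constant distance $C$, whereas the paper's varies with the $\mathrm{Pic}^0$-parts), but since the lemma only asks for \emph{some} invariant compact metric extending $\mathsf{d}_0$, this loss is irrelevant. Your remark that divisibility of $({\rm U}(1))^{2d}$ splits the extension is also a valid shortcut, though as you note the direct norm construction already avoids it.
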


\begin{proof}
	We recall $\mathcal{P}(X) = \bigoplus_{t \in G} {\rm Pic}^t(X)$ from (\ref{eqpxdcpdcc}), where $G$ is a finite subset of $\mathcal{P}(X)$ corresponding to the first Chern classes that are torsion. A distance $\mathsf{d}_0$ on ${\rm Pic}^0(X)$ naturally induces one, say $\mathsf{d}_t$, on ${\rm Pic}^t(X)$ for each $t \in G\setminus\{0\}$ by
	\begin{equation*}
		\mathsf{d}_t ( t+ v_1 , t+v_2) := \mathsf{d}_0 (v_1 , v_2),
	\end{equation*}
	where $v_1 , v_2 \in {\rm Pic}^0(X)$, recalling that ${\rm Pic}^t(X) = t \oplus {\rm Pic}^0(X)$ is a direct sum.
	
	Setting $\delta := \mathrm{diam}_{\mathsf{d}_0} ({\rm Pic}^0(X)) >0$, we immediately find $\delta = \mathrm{diam}_{\mathsf{d}_t} ({\rm Pic}^t(X))$ for all $t \in G$. We then define a map $\widetilde{\mathsf{d}} \colon \mathcal{P} (X) \times \mathcal{P} (X) \to \mathbb{R}$ as follows: for each $x \in {\rm Pic}^t(X)$ and $y \in {\rm Pic}^s(X)$, $t,s \in G$, we define
\begin{equation*}
	\widetilde{\mathsf{d}} (x,y) :=
	\begin{cases}
		\mathsf{d}_t (x,y) \quad &(t=s) \\
		\mathsf{d}_t (x,t ) + \mathsf{d}_s (s,y) + \delta \quad &(t \neq s)
	\end{cases}
\end{equation*}
where $\mathsf{d}_0 (x,0)$ is meant to stand for $\mathsf{d}_0 (x , \mathbb{I}_X)$. We then define
\begin{equation*}
	\mathsf{d} (x,y) := \inf_{z \in \mathcal{P} (X)} \widetilde{\mathsf{d}} (x -z,y-z)
\end{equation*}
and prove that it defines an invariant distance on $\mathcal{P}(X)$.

We first prove the invariance. For $x = t+v_1 \in {\rm Pic}^t(X)$ and $y = s+v_2 \in {\rm Pic}^s(X)$ (recalling again the direct sum ${\rm Pic}^t(X) = t \oplus {\rm Pic}^0(X)$), we note
\begin{align*}
	\tilde{\mathsf{d}} (t+v_1 , s+v_2) &= \mathsf{d}_t (t+v_1, t) + \mathsf{d}_s (s+v_2,s) + \delta \\
	&= \mathsf{d}_0 (v_1, 0) + \mathsf{d}_0 (v_2,0) + \delta  \\
	&= \mathsf{d}_0 (-v_1, 0) + \mathsf{d}_0 (-v_2,0) + \delta  \\
	&= \tilde{\mathsf{d}} (-t-v_1 , -s-v_2),
\end{align*}
by the invariance of $\mathsf{d}_0$, and
\begin{equation*}
	\mathsf{d} (x,y) = \mathsf{d} (x-z,y-z)
\end{equation*}
for any $z \in \mathcal{P}(X)$ by definition. These properties immediately imply that $\mathsf{d}$ is invariant in the sense of Ueda \cite[\S 4.1]{Ueda}.

It remains to show that $\mathsf{d}$ is a distance on $\mathcal{P}(X)$. We first observe the following: suppose $x,y \in {\rm Pic}^t(X)$ for some $t \in G$. Then, writing $x= t + v_1$ and $y= t+v_2$, we find
\begin{align*}
	\mathsf{d} (x,y) &= \inf_{s \in G , w \in {\rm Pic}^0(X)} \mathsf{d}_{t-s} (t+v_1 - s-w,t+v_2- s-w) \\
	&= \inf_{w \in {\rm Pic}^0(X)} \mathsf{d}_{0} (v_1 -w,v_2-w) \\
	&= \mathsf{d}_{0} (v_1,v_2)
\end{align*}
by the invariance of $\mathsf{d}_0$. If $x = t+v_1 \in {\rm Pic}^t(X)$ and $y =s+v_2 \in {\rm Pic}^s(X)$ for some $t,s \in G$ with $t \neq s$, we find
\begin{align*}
	\mathsf{d} (x,y) &= \inf_{u \in G , w \in {\rm Pic}^0(X)} \widetilde{\mathsf{d}} (t+v_1 - u-w,s+v_2- u-w) \\
	&= \inf_{u \in G , w \in {\rm Pic}^0(X)}  \left( \mathsf{d}_{t-u} (t+v_1 - u-w,t-u ) + \mathsf{d}_{s-u} (s-u, s+v_2- u-w) \right) + \delta \\
	&=\inf_{w \in {\rm Pic}^0(X)} \left( \mathsf{d}_{0} (v_1 , w ) + \mathsf{d}_{0} (v_2,w) \right) + \delta 
\end{align*}
Note that all the infima above are in fact minima by the compactness of ${\rm Pic}^0(X)$ with respect to $\mathsf{d}_0$.

The argument above implies that the only nontrivial part is the triangle inequality, which we prove now. For $x = t+v_1 \in {\rm Pic}^t(X)$, $y=s+v_2 \in {\rm Pic}^s(X)$, $z=u+v_3 \in {\rm Pic}^u(X)$, we first consider the case when $s$, $t$, $u$ are all pairwise distinct. We pick $w' \in {\rm Pic}^0(X)$ such that $\inf_{w \in {\rm Pic}^0(X)} \left( \mathsf{d}_{0} (v_1 , w ) + \mathsf{d}_{0} (v_2,w) \right) = \mathsf{d}_{0} (v_1 , w') + \mathsf{d}_{0} (v_2,w')$, and compute
\begin{align*}
	\mathsf{d} (x,z) &= \inf_{w \in {\rm Pic}^0(X)} \left( \mathsf{d}_{0} (v_1 , w ) + \mathsf{d}_{0} (v_3,w) \right) + \delta  \\
	&\le \delta  + \mathsf{d}_{0} (v_1 , w') + \mathsf{d}_{0} (v_3,w')  \\
	&\le \delta  + \mathsf{d}_{0} (v_1 , w' )  + \delta  + \mathsf{d}_{0} (v_2,w') + \inf_{w'' \in {\rm Pic}^0(X)} \left( \mathsf{d}_{0} (v_2 , w'') + \mathsf{d}_{0} (v_3,w'') \right)  \\
	&= \inf_{w \in {\rm Pic}^0(X)} \left( \mathsf{d}_{0} (v_1 , w ) + \mathsf{d}_{0} (v_2,w) \right) + \inf_{w'' \in {\rm Pic}^0(X)} \left( \mathsf{d}_{0} (v_2 , w'' ) + \mathsf{d}_{0} (v_3,w'') \right) + 2\delta \\
	&= \mathsf{d} (x,y) + \mathsf{d} (y,z).
\end{align*}
If $t=u$ and $t \neq s$, we have $\mathsf{d} (x,z) \le \delta < \mathsf{d} (x,y) + \mathsf{d} (y,z)$. If $t \neq u$ and $t=s$, we have
\begin{align*}
	\mathsf{d} (x,z) &= \inf_{w \in {\rm Pic}^0(X)} \left( \mathsf{d}_{0} (v_1 , w ) + \mathsf{d}_{0} (v_3,w) \right) + \delta  \\
	&\le \inf_{w \in {\rm Pic}^0(X)} \left(  \mathsf{d}_{0} (v_1 , v_2 ) + \mathsf{d}_{0} (v_2 , w ) + \mathsf{d}_{0} (v_3,w) \right) + \delta  \\
	&=\mathsf{d} (x,y) + \mathsf{d} (y,z),
\end{align*}
and similarly for $t \neq u$ and $s=u$. The case when $t=s=u$ is obvious.

Thus $\mathsf{d}$ defines an invariant distance on $\mathcal{P}(X)$, and it is immediate that its restriction to ${\rm Pic}^0(X)$ is $\mathsf{d}_0$ and that $\mathcal{P}(X)$ is compact with respect to $\mathsf{d}$.
\end{proof}

\subsection{\v{C}ech--Dolbeault correspondence}\label{section:C-D_corresp}

In this subsection, $X$ is a compact complex manifold and $F$ is a holomorphic line bundle on $X$. 
Fix a Hermitian metric $g$ of $X$, a Hermitian (fibre) metric $h$ on $F$, and denote by $\omega$ the $(1,1)$-form associated to $g$. In what follows we often identify $g$ with $\omega$, especially when $g$ is K\"ahler. 

We further fix an open covering $\{U_j\}$ of X such that $\#\{U_j\}<\infty$ and each $U_j$ is Stein and contractible, such that $\{U_j\}$ trivialises any flat line bundle $F \in \mathcal{P} (X)$. The existence of such an open cover can be proved as follows, which should be surely well-known to the experts but we spell out the proof for completeness.

\begin{lemma} \label{lmufmcv}
	Let $p_1 : \tilde{X} \to X$ be the universal cover of $X$ and let $\pi_1 (X,*)$ be the fundamental group of $X$ with the basepoint $*$. Let $\{U'_j\}$ be a finite open cover of $X$ such that $p_1^{-1}(U'_j) = \bigsqcup_{g \in \pi_1 (X,*)} V_{j,g}$ and $p_1 |_{V_{j,g}} \colon V_{j,g}\to U'_j$ is a homeomorphism for all $j$. There exists a finite (contractible) Stein open cover $\{U_j\}$ which is a refinement of $\{U'_j\}$ and trivialises all $F \in \mathcal{P} (X)$.
\end{lemma}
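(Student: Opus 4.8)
The plan is to construct $\{U_j\}$ out of small coordinate balls that are simultaneously contractible and Stein. The guiding principle is that \emph{contractibility} of each member will force every flat holomorphic line bundle to trivialise on it, while \emph{Stein}-ness (needed elsewhere for the \v{C}ech--Dolbeault arguments) will be arranged by taking the charts to be biholomorphic to convex domains in $\mathbb{C}^n$. Finiteness is then immediate from the compactness of $X$, and the refinement condition will be built into the construction.

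Concretely, I would proceed as follows. For each point $x \in X$ choose a holomorphic coordinate chart $\phi_x \colon W_x \isom \phi_x(W_x) \subset \mathbb{C}^n$ centred at $x$, and then pick a radius $\varepsilon_x>0$ small enough that the Euclidean ball $B(0,\varepsilon_x) \subset \mathbb{C}^n$ is contained in $\phi_x(W_x)$ and that $U_x := \phi_x^{-1}(B(0,\varepsilon_x))$ is contained in some member $U'_{j(x)}$ of the given cover; such an $\varepsilon_x$ exists because $\{U'_j\}$ is open and covers $X$. Since $\{U_x\}_{x\in X}$ is an open cover of the compact space $X$, I extract a finite subcover, which I rename $\{U_j\}$. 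Each $U_j$ is biholomorphic to a Euclidean ball, hence is contractible and Stein, and by construction $\{U_j\}$ refines $\{U'_j\}$. Because each $U_j$ is contractible it is in particular simply connected, so $p_1^{-1}(U_j)$ is a disjoint union of sheets each mapped homeomorphically onto $U_j$, exactly in line with the hypothesis imposed on $\{U'_j\}$.

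It remains to check that each $U_j$ trivialises every $F \in \mathcal{P}(X)$ in the sense recalled in \S\ref{section:flathlb}, that is, that $F$ admits a holomorphic frame $e_j$ on $U_j$ with $|e_j/e_k|\equiv 1$ on the overlaps. I would deduce this from the flat metric $h$: writing $\nabla = \nabla^{1,0} + \delbar_F$ for its (unitary and flat) Chern connection, the triviality of the holonomy on the simply connected set $U_j$ yields a nowhere-vanishing parallel section $e_j$. Since $\nabla e_j = 0$ forces $\delbar_F e_j = 0$, the section $e_j$ is holomorphic; and since $\nabla$ is $h$-unitary, the norm $|e_j|_h$ is constant, so after rescaling we may assume $|e_j|_h \equiv 1$, whence the transition functions $e_j/e_k$ are unimodular as required.

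The step requiring the most care is arranging Stein-ness together with the inclusion $U_j \subset U'_{j(x)}$ simultaneously, since geodesic smallness alone does not guarantee holomorphic convexity. This is precisely why I take the $U_j$ to be preimages of convex coordinate balls: for such sets both Stein-ness and contractibility hold automatically, and the contractibility used for the trivialisation above is then a free by-product of the same choice. The only mild subtlety is that the charts and radii must be chosen so that each $U_j$ lies inside a single $U'_{j(x)}$, which is handled by shrinking $\varepsilon_x$ and invoking compactness.
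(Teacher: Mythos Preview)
Your argument is correct. The construction of $\{U_j\}$ as preimages of coordinate balls gives contractible Stein sets refining $\{U'_j\}$, and the parallel-transport argument on a simply connected set does produce a holomorphic unit-norm frame for each flat $F$, exactly as needed.

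The paper takes a slightly different route to the trivialisation step. Rather than invoking the flat Chern connection, it uses the description of any $F \in \mathcal{P}(X)$ as an associated bundle $\tilde{X} \times_{\rho} \mathbb{C}$ for a representation $\rho \colon \pi_1(X,*) \to \mathrm{U}(1)$, and then reads off $p_2^{-1}(U'_j) \cong U'_j \times \mathbb{C}$ directly from the evenly-covered hypothesis $p_1^{-1}(U'_j) = \bigsqcup_g V_{j,g}$; the refinement to polydisks is only needed to secure Stein-ness. The two arguments are really the same fact seen from different angles (monodromy of a flat connection versus the associated-bundle construction), but note that the paper's version actually \emph{uses} the stated hypothesis on $\{U'_j\}$, whereas your proof does not: you only need $\{U'_j\}$ to be an open cover so that a refinement makes sense, and simple connectedness of your $U_j$ does all the work. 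Your approach is thus marginally more economical, while the paper's makes the role of the universal cover and the hypothesis more visible.
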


\begin{proof}
	It suffices to show that $\{U'_j\}$ trivialises all $F \in \mathcal{P} (X)$, by replacing each $U'_j$ with polydisks inside. We recall that for any $F \in \mathcal{P}(X)$ there exists a representation $\rho : \pi_1 (X, *) \to {\rm U(1)}$ such that $F = (\tilde{X} \times \cx / \sim ) =: \tilde{X} \times_{\rho} \cx$, where the equivalence class $\sim$ is defined by the $\pi_1 (X, *)$-action
	\begin{equation*}
		\tilde{X} \times \cx \ni (x,l) \mapsto (g \cdot x , \rho(g)l) \in \tilde{X} \times \cx
	\end{equation*}
	defined by $g \in \pi_1 (X, *)$; see \cite[Proposition 1.4.21]{Kob}. The bundle projection map $p_2 \colon \tilde{X} \times_{\rho} \cx \to X$ is defined by $p_2 := p_1 \circ \mathrm{pr}_1$, where $\mathrm{pr}_1$ is the first projection (which descends to the map on the quotient after composing with $p_1$). We then find, as required,
	\begin{equation*}
		p_2^{-1} (U'_j) = \left[ \mathrm{pr}^{-1}_1 \left( \bigsqcup_{g \in \pi_1 (X,*)} V_{j,g} \right) \right] =  \left[  \bigsqcup_{g \in \pi_1 (X,*)} V_{j,g} \times \cx \right] \cong U'_j \times \cx
	\end{equation*}
	where $[ \cdot ]$ is the equivalence class under $\sim$.
\end{proof}

In this section, we fix an open cover as above once and for all. We use the following two terms in order to make the arguments clear:

\begin{itemize}
\item For a positive constant $M$, we say that the condition {\bf ($L^2$-estimate)$_M$} holds for $(X, F, \omega, h)$ if, for any smooth $\delbar$-closed $(0, 1)$-form $v$ with values in $F$ whose Dolbeault cohomology class $[v]\in H^{0, 1}(X, F)$ is trivial, there exists a smooth global section $u$ of $F$ such that $\delbar u=v$ and 
\[
\sqrt{\int_X|u|_h^2\,dV_\omega} \leq M \sqrt{\int_X|v|_{h, \omega}^2\,dV_\omega}
\]
hold. \qed
\item For a positive constant $K$, we say that the condition {\bf ($L^\infty$-estimate)$_K$} holds for $(X, F, h, \{U_j\})$ if, for any \v{C}ech $1$-coboundary $\{(U_{jk}, f_{jk})\}\in \check{Z}^1(\{U_j\}, \mathcal{O}_X(F))$ whose \v{C}ech cohomology class $[\{(U_{jk}, f_{jk})\}]\in \check{H}^1(\{U_j\}, \mathcal{O}_X(F))$ is trivial, there exists a \v{C}ech $0$-cochain $\{(U_j, f_{j})\}\in \check{C}^0(\{U_j\}, \mathcal{O}_X(F))$ such that $\delta \{(U_{j}, f_{j})\}=\{(U_{jk}, f_{jk})\}$ and 
\[
\max_j\sup_{U_j}|f_j|_h\leq K\cdot \max_{j, k}\sup_{U_{jk}}|f_{jk}|_h
\]
hold. \qed
\end{itemize}

By using these terminologies, Theorem \ref{thm:main_1} can be reworded as follows: when $\omega$ is a K\"ahler form, there exists a constant $K>0$ such that the condition {\bf ($L^2$-estimate)$_{K/\mathsf{d}(\mathbb{I}_X, F)}$} holds for any $(X, F, \omega, h)$ with $F\in \mathcal{P}(X)\setminus \{\mathbb{I}_X\}$ and flat $h$. 
On the other hand, Ueda's lemma can be reworded as follows: 
there exists a constant $K>0$ such that the condition {\bf ($L^\infty$-estimate)$_{K/\mathsf{d}(\mathbb{I}_X, F)}$} holds for any $(X, F, h, \{U_j\})$ with $F\in \mathcal{P}(X)\setminus \{\mathbb{I}_X\}$ and flat $h$ 
(note that the solution $\{(U_{j}, f_{j})\}$ of the $\delta$-equation is unique when $h$ if flat, since $H^0(X, F)=0$ holds in such case by Lemma \ref{lem:flat_H0=0}). 

As our motivation comes from Ueda's lemma, here let us compare these two conditions in order to study the relation between Theorem \ref{thm:main_1} and Ueda's lemma.  By applying the standard argument on the \v{C}ech--Dolbeault correspondence, we obtain the following: 
\begin{lemma}\label{lem:CD-corresp}
Assume that the condition {\bf ($L^2$-estimate)$_M$} holds for $(X, F, \omega, h)$ and a constant $M>0$. 
Then there exist positive constants $C_1=C_1(\{U_j\}, h, \omega)$ and $C_2=C_2(\{U_j\}, h, \omega)$ such that the condition {\bf ($L^\infty$-estimate)$_{C_1M+C_2}$} holds for $(X, F, h, \{U_j\})$. 
\end{lemma}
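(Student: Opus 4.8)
The plan is to pass from an $L^2$-solution of the $\delbar$-equation to an $L^\infty$-solution of the $\delta$-equation via the standard \v{C}ech--Dolbeault machinery, tracking how the estimates transform under each step of the correspondence. Given a \v{C}ech $1$-cocycle $\mathfrak{f}_1 = \{(U_{jk}, f_{jk})\}$ representing the trivial class in $\check{H}^1(\{U_j\}, \mathcal{O}_X(F))$, I would first build a smooth $(0,1)$-form $v$ on $X$ with values in $F$ whose Dolbeault class corresponds to the \v{C}ech class of $\mathfrak{f}_1$. This is done in the usual way: choose a smooth partition of unity $\{\rho_j\}$ subordinate to $\{U_j\}$ and set $g_j := \sum_k \rho_k f_{jk}$ on each $U_j$, so that $\{(U_j, g_j)\}$ is a smooth $0$-cochain whose \v{C}ech coboundary is $\mathfrak{f}_1$; then the local forms $\delbar g_j$ patch to a global smooth $\delbar$-closed $(0,1)$-form $v$ with values in $F$, since $g_j - g_k = f_{jk}$ is holomorphic on overlaps. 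Because the \v{C}ech class is trivial, $[v] \in H^{0,1}(X,F)$ is trivial as well (this is precisely the content of the \v{C}ech--Dolbeault isomorphism).

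The next step is to control the $L^2$-norm of $v$ in terms of the $\sup$-norm of the cochain $\mathfrak{f}_1$. Here I would estimate $|\delbar g_j|_{h,\omega} = |\sum_k (\delbar \rho_k) f_{jk}|_{h,\omega}$ pointwise: since there are finitely many $U_j$, the functions $\delbar \rho_k$ have uniformly bounded $\omega$-norm depending only on the fixed data $\{U_j\}, \omega$, and $|f_{jk}|_h \le \max_{j,k} \sup_{U_{jk}} |f_{jk}|_h$. Integrating over the compact $X$ (whose volume is finite) yields
\begin{equation*}
	\sqrt{\int_X |v|_{h,\omega}^2 \, dV_\omega} \le C_0 \cdot \max_{j,k} \sup_{U_{jk}} |f_{jk}|_h
\end{equation*}
with $C_0 = C_0(\{U_j\}, h, \omega)$ absorbing the partition-of-unity derivatives and the total volume. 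Invoking the hypothesis \textbf{($L^2$-estimate)$_M$}, I obtain a smooth global section $u$ of $F$ with $\delbar u = v$ and $\Vert u \Vert_{L^2} \le M \Vert v \Vert_{L^2} \le M C_0 \cdot \max_{j,k} \sup_{U_{jk}} |f_{jk}|_h$.

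To finish, I would produce the holomorphic $0$-cochain solving $\delta\{(U_j,f_j)\} = \mathfrak{f}_1$ by setting $f_j := g_j - u$ on each $U_j$; since $\delbar u = v = \delbar g_j$ locally, each $f_j$ is holomorphic, and $f_j - f_k = g_j - g_k = f_{jk}$, so $\delta\{(U_j,f_j)\} = \mathfrak{f}_1$ as required. The remaining task is to bound $\max_j \sup_{U_j} |f_j|_h$ by the right-hand side, which is where I expect the main obstacle to lie: passing from the $L^2$-bound on $u$ to a pointwise $\sup$-bound on $f_j$. Since each $f_j$ is \emph{holomorphic} (equivalently $g_j - u$ is, and $g_j$ is a bounded smooth explicit expression), I would exploit interior elliptic/sub-mean-value estimates for holomorphic sections on the Stein sets $U_j$, shrinking to a slightly smaller cover if necessary so that the $\sup$-norm of a holomorphic function on the smaller set is controlled by its $L^2$-norm on the larger one. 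Concretely, $\sup_{U_j} |f_j|_h \le C' (\Vert u \Vert_{L^2} + \Vert g_j \Vert_{L^2})$, where the mean-value inequality supplies $C' = C'(\{U_j\}, h, \omega)$, and $\Vert g_j \Vert_{L^2} \le C'' \max_{j,k}\sup_{U_{jk}}|f_{jk}|_h$ by the same partition-of-unity bound as before. Combining these gives
\begin{equation*}
	\max_j \sup_{U_j} |f_j|_h \le (C_1 M + C_2) \cdot \max_{j,k} \sup_{U_{jk}} |f_{jk}|_h
\end{equation*}
with $C_1, C_2$ depending only on $\{U_j\}, h, \omega$, which is exactly \textbf{($L^\infty$-estimate)$_{C_1 M + C_2}$}. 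The one point demanding care is ensuring the sub-mean-value / interior estimate is applied on a cover for which every point of the original $U_j$ sits well inside a coordinate polydisk where such an estimate is available; this is harmless because we may fix any convenient refinement at the outset and the constants remain independent of $F$.
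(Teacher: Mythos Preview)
Your approach is essentially identical to the paper's: build $g_j=\sum_k\rho_k f_{jk}$, patch $\delbar g_j$ into a global $v$, solve $\delbar u=v$ with the $L^2$-estimate, set $f_j=g_j-u$, and convert the $L^2$-bound on $f_j$ into a sup bound via the mean-value inequality for the holomorphic function $f_j/e_j$ (with $e_j$ a local holomorphic frame).

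There is one step you handle too loosely. The mean-value inequality is an \emph{interior} estimate: it gives you $\sup_{U_j^*}|f_j|_h\le C'\|f_j\|_{L^2(U_j)}$ only on a relatively compact $U_j^*\Subset U_j$ (still chosen so that $\{U_j^*\}$ covers $X$), not on the full $U_j$. Your remedy---``fix any convenient refinement at the outset''---does not finish the job, because the lemma asks for $\sup_{U_j}|f_j|_h$ on the \emph{given} cover. The paper closes this gap with a one-line cocycle trick: for any $x\in U_j$ choose $k$ with $x\in U_k^*$, and use
\[
|f_j(x)|_h\le |f_k(x)|_h+|f_{jk}(x)|_h\le A(C_1M+C_2'')+A,
\]
so that the interior bound on the $U_k^*$'s propagates to the full $U_j$'s at the cost of adding $1$ to $C_2''$. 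With this extra sentence your argument is complete and matches the paper exactly.
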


\begin{proof}
Fix a partition of unity $\{(U_j, \rho_j)\}$ which is subordinate to the open cover $\{U_j\}$, 
and a relatively compact open subsets $U_j^*\Subset U_j$ such that $\{U_j^*\}$ is also an open covering of $X$. 

Take a \v{C}ech $1$-coboundary $\{(U_{jk}, f_{jk})\}\in \check{Z}^1(\{U_j\}, \mathcal{O}_X(F))$ whose \v{C}ech cohomology class is trivial. 
Then it is well-known that, for a smooth section $g_j := \sum_{\ell\not=j}\rho_\ell f_{j\ell}$ of $F|_{U_j}$, $\{(U_j, \delbar g_j)\}$ patches to define a global smooth $(0, 1)$-form $v$ with values in $F$, whose Dolbeault class is the one which corresponds to the \v{C}ech cohomology class $[\{(U_{jk}, f_{jk})\}]$ via the \v{C}ech--Dolbeault correspondence (therefore the Dolbeault cohomology class $[v]$ is trivial). By using the solution $u$ of the equation $\delbar u=v$, one can construct the $\delta$-primitive $\{(U_j, f_j)\}$ by letting $f_j:=g_j-u$. 

In what follows, we estimate the $L^\infty$-norms of $f_j$'s by using $A:= \max_{j, k}\sup_{U_{jk}}|f_{jk}|_h$. As it follows from $|g_j|_h\leq \sum_{\ell\not=j}\rho_\ell |f_{j\ell}|_h\leq A \sum_{\ell\not=j}\rho_\ell \leq A$ that 
\[
\int_{U_j}|g_j|_h^2\,dV_\omega \leq A^2\cdot {\rm Vol}_\omega(U_j)\quad \left({\rm Vol}_\omega(U_j):= \int_{U_j}\,dV_\omega\right), 
\]
and from the condition {\bf ($L^2$-estimate)$_M$} and the inequality 
$|v|_{h, \omega}(x)\leq \sum_{\ell\not=j}\|\delbar \rho_\ell\|_\omega(x)\cdot |f_{j\ell}(x)|_h
\leq A\cdot \sup_{x'\in X}\sum_{\ell}\|\delbar \rho_\ell\|_\omega(x')$ for each $x\in X$ that 
\[
\int_{U_j}|u|_h^2\,dV_\omega 
\leq \int_X|u|_h^2\,dV_\omega 
\leq M^2\cdot A^2\cdot \left(\sup_{x\in X}\sum_{\ell}\|\delbar \rho_\ell\|_\omega(x)\right)^2\cdot {\rm Vol}_\omega(U_j), 
\]
one has that 
\begin{equation}\label{eq:1_in_prf_CD_corresp}
\sqrt{\int_{U_j}|f_j|_h^2\,dV_\omega} \leq A\cdot (C_1'M+C_2')
\end{equation}
holds for constants $C_1'$ and $C_2'$ which depend only on $\{(U_j, \rho_j)\}$ and $\omega$. 
The mean value inequality for the holomorphic function $f_j/e_j$, where $e_j$ is a local holomorphic frame of $F$ on $U_j$, implies that the inequality 
\begin{equation}\label{eq:2_in_prf_CD_corresp}
\sup_{U_j^*}|f_j|_h \leq C_3'\sqrt{\int_{U_j}|f_j|_h^2\,dV_\omega}
\end{equation}
holds for a constant $C_3'$ which only depends on $\{U_j\}$, $\{U_j^*\}$, $\inf_{U_j}|e_j|_h$, and $\sup_{U_j}|e_j|_h$ (strictly speaking, one needs to slightly modify this part when the open covering $\{U_j\}$ is not fine enough; see Remark \ref{rmk:not_suff_fine_case} below). 
By combining the inequalities (\ref{eq:1_in_prf_CD_corresp}) and (\ref{eq:2_in_prf_CD_corresp}), one has that 
$\sup_{U_j^*}|f_j|_h \leq A\cdot (C_1M+C_2'')$ holds for constants $C_1$ and $C_2''$ which depend only on $\{(U_j, \rho_j)\}$, $\{e_j\}$, $\{U_j^*\}$, $\omega$, and $h$. 

For each point $x\in U_j$, one can estimate $|f_j(x)|_h$ by using $k$ such that $x\in U_k^*$ as follows: 
$|f_j(x)|_h\leq |f_k(x)|_h+|f_{jk}(x)|_h\leq A\cdot (C_1M+C_2'') + A= A\cdot (C_1M+C_2''+1)$. 
Thus the assertion holds by letting $C_2:=C_2''+1$. 
\end{proof}

\begin{remark}\label{rmk:not_suff_fine_case}
When the open covering $\{U_j\}$ is not fine enough, 
it may possible that $\sup_{U_j}|e_j|_h=\infty$ holds for some $j$, where we are using the notation in the proof of Lemma \ref{lem:CD-corresp}. 
In such a case, we need to take an open set $V_j$ such that $U_j^*\Subset V_j\Subset U_j$. 
Then, as it is clear that $\sup_{V_j}|e_j|_h<\infty$, we can recover the inequality (\ref{eq:2_in_prf_CD_corresp}) by applying the mean value inequality for the holomorphic function $f_j/e_j$ on a disc centred at each point of $U_j^*$ with radius sufficiently smaller than the width of the set $V_j\setminus U_j^*$ (instead of $U_j\setminus U_j^*$). 
\end{remark}

It seems natural to pose the following: 
\begin{question}
Assume that the condition {\bf ($L^\infty$-estimate)$_K$} holds for $(X, F, h, \{U_j\})$ and a constant $K>0$. 
Does the condition {\bf ($L^2$-estimate)$_M$} hold for $(X, F, \omega, h)$ for some constant $M$? If so, what is the relation between $K$ and $M$?
\end{question}

Note that, in order to answer this question in accordance with the standard argument of the \v{C}ech--Dolbeault correspondence, first one needs to obtain the solution $u_j\colon U_j\to F|_{U_j}$ of the $\delbar$-equation $\del u_j = v|_{U_j}$ on each $U_j$ for a given $(0, 1)$-from $v$ with estimate of $\sup_{U_j}|u_j|_h$ by using the $L^2$-norm of $v$ (then one can apply {\bf ($L^\infty$-estimate)$_K$} for $\{(U_{jk}, u_j-u_k)\}$ to obtain a $\delta$-primitive $\{(U_j, f_j)\}$ with $L^\infty$-estimate). By using this solution, one can construct the $\delta$-primitive $u$ of $v$ by letting $u:=u_j-f_j$ on each $U_j$), which seems to be difficult. 

\subsection{Albanese varieties}\label{section:alb}

In what follows, we assume that $X$ is a compact K\"ahler manifold with a K\"ahler metric $\omega$. We fix a basis $\{ \zeta_j \}_{j=1}^d$ for the $\mathbb{C}$-vector space $H^0(X, \Omega_X^1)$ once and for all, and assume that it is orthonormal with respect to the $L^2$-inner product defined by $\omega$, where $d := \dim H^0(X, \Omega_X^1)$. We also fix the generators $[\gamma_1], [\gamma_2], \dots, [\gamma_{2d}]$ of the torsion-free part
\begin{equation*}
	H_1(X, \mathbb{Z})_{\rm free} := H_1(X, \mathbb{Z}) / \mathrm{torsion}
\end{equation*}
of the first homology group. 

The Albanese variety 
\begin{equation} \label{eqdfalbv}
\mathrm{Alb} (X) := H^0 (X, \Omega_X^1)^{\vee} / H_1 (X, \mathbb{Z})_{\rm free},	
\end{equation}
is a torus associated to $X$. Note that we also have the Albanese map ${\rm alb} \colon X \to  \mathrm{Alb} (X)$, defined with respect to a fixed basepoint $* \in X$, as
\begin{equation*}
	{\rm alb} (x) := \left[ H^0 (X, \Omega_X^1) \ni \zeta \mapsto \int_*^x \zeta \in \mathbb{C} \right]
\end{equation*}
where $[ \cdot ]$ stands for the equivalence class defined by integration along cycles in $H_1 (X, \mathbb{Z})_{\rm free}$. 
Note also that the homomorphism $H_1(X, \mathbb{Z})\ni [\gamma]\mapsto (\zeta\mapsto \int_\gamma \zeta)\in H^0(X, \Omega_X^1)^\vee$ maps any torsion element of $H_1(X, \mathbb{Z})$ to zero and induces an injection $H_1(X, \mathbb{Z})_{\rm free}\to H^0(X, \Omega_X^1)^\vee$, by which $H_1(X, \mathbb{Z})_{\rm free}$ is naturally regarded as a subgroup of $H^0 (X, \Omega_X^1)^{\vee}$ in the definition of $\mathrm{Alb} (X)$ above. 
Recall the following well-known result (see e.g.~\cite[Proposition 3.3.8]{Huybrechts}).

\begin{lemma}
	The Albanese map is holomorphic, and induces an isomorphism of vector spaces
	\begin{equation*}
		{\rm alb}^* \colon H^0 ( \mathrm{Alb} (X) , \Omega_{\mathrm{Alb} (X)}^1) \isom H^0 (X, \Omega_X^1).
	\end{equation*}
\end{lemma}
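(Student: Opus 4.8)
The plan is to establish the two assertions in turn: the holomorphicity of ${\rm alb}$, and then the fact that ${\rm alb}^*$ is an isomorphism. Throughout I would work with the lift $\widetilde{\rm alb} \colon X \to H^0(X, \Omega_X^1)^\vee$ of the Albanese map to the universal cover of $\mathrm{Alb}(X)$, defined on a simply connected neighbourhood of each point by fixing a path from the basepoint $*$. Using the orthonormal basis $\{\zeta_j\}_{j=1}^d$ fixed in this subsection together with its dual coordinates $(z_1, \dots, z_d)$ on $H^0(X, \Omega_X^1)^\vee$, the $j$-th component of $\widetilde{\rm alb}$ is the function $x \mapsto \int_*^x \zeta_j$.

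For holomorphicity, I would argue that each component is locally holomorphic: since $\zeta_j \in H^0(X, \Omega_X^1)$ is a holomorphic, hence $\delbar$-closed, $1$-form, it admits a local holomorphic primitive, so $x \mapsto \int_*^x \zeta_j$ is a holomorphic function of $x$ on each simply connected chart. Thus $\widetilde{\rm alb}$ is holomorphic. Well-definedness modulo the lattice $H_1(X, \mathbb{Z})_{\rm free}$ follows from the observation recalled before the lemma: replacing the path from $*$ to $x$ by another one alters $\int_*^x \zeta$ by the period $\int_\gamma \zeta$ along a closed loop $\gamma$, which is exactly the image of $[\gamma]\in H_1(X, \mathbb{Z})_{\rm free}$ under its embedding into $H^0(X, \Omega_X^1)^\vee$. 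Hence $\widetilde{\rm alb}$ descends to the holomorphic map ${\rm alb} \colon X \to \mathrm{Alb}(X)$.

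For the isomorphism, I would first identify the source: on a complex torus $V/\Lambda$ the global holomorphic $1$-forms are precisely the translation-invariant ones, so evaluation at the origin gives $H^0(\mathrm{Alb}(X), \Omega_{\mathrm{Alb}(X)}^1) \cong (H^0(X, \Omega_X^1)^\vee)^\vee = H^0(X, \Omega_X^1)$ by finite-dimensionality, a basis being $dz_1, \dots, dz_d$. The crux is then the pullback computation: since $z_j \circ \widetilde{\rm alb}(x) = \int_*^x \zeta_j$, we get ${\rm alb}^*(dz_j) = d\left( \int_*^x \zeta_j \right) = \zeta_j$. Therefore ${\rm alb}^*$ sends the basis $\{dz_j\}$ bijectively to the basis $\{\zeta_j\}$ and is an isomorphism. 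The only genuinely delicate points are the well-definedness and holomorphicity of the lift $\widetilde{\rm alb}$; once these are in place, the identification of invariant forms on the torus and the elementary pullback identity ${\rm alb}^*(dz_j) = \zeta_j$ finish the argument.
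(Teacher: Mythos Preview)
Your proof is correct. The paper does not give its own proof of this lemma; it simply records it as a well-known result and refers to \cite[Proposition 3.3.8]{Huybrechts}. Your direct argument --- local holomorphic primitives for the $\zeta_j$ to get holomorphicity, periods for well-definedness, and the pullback identity ${\rm alb}^*(dz_j)=\zeta_j$ via $z_j\circ\widetilde{\rm alb}(x)=\int_*^x\zeta_j$ --- is the standard one and is fine.

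It is worth noting one small contrast: the identity ${\rm alb}^*(dz_j)=\zeta_j$, which you obtain by the one-line differentiation of the integral, is re-derived later in the paper (Lemma~\ref{lmalbfxbsmp}) by a more abstract route using functoriality of the Albanese map and the fact that it is the identity on tori. Your computation is more elementary and self-contained; the paper's later argument has the advantage of making the role of the chosen basis and the isomorphism $\psi\colon \mathrm{Alb}(X)\isom\mathbb{C}^d/\Lambda$ more explicit, which is what the paper needs downstream.
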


 As pointed out before, we always work with a fixed basis $\{ \zeta_j \}_{j=1}^d$ for $H^0(X, \Omega_X^1)$, which defines an isomorphism $\widetilde{\psi} \colon H^0(X, \Omega_X^1)^{\vee} \isom \mathbb{C}^d$ by means of the dual basis $\{ \zeta^{\vee}_j \}_{j=1}^d$. Suppose that we write $(z_1 , \dots , z_d )$ for the global holomorphic coordinates for $\mathbb{C}^d$ defined as $z_j = \widetilde{\psi} (\zeta^{\vee}_j)$. The choice of generators $[\gamma_1], [\gamma_2], \dots, [\gamma_{2d}]$ of the free part $H_1(X, \mathbb{Z})_{\rm free}$ defines a lattice $\Lambda:= \langle \lambda_1 , \dots , \lambda_{2d} \rangle$ in $\mathbb{C}^d$ via the relationship
 \begin{equation*}
 	\int_{\gamma_{\nu}} = \sum_{j=1}^{d} (\lambda_{\nu})_{j} \zeta_j^{\vee}
 \end{equation*}
 for each $\nu = 1 , \dots , 2d$, where $(\lambda_{\nu})_{j}$ is the $j$-th component of $\lambda_{\nu} \in \mathbb{C}^d$. All the arguments that follow depend on this specific basis and generators, and it is helpful to write down the above lemma in a more specific manner using these fixed data, which gives us an isomorphism $\psi \colon \mathrm{Alb} (X) \cong \mathbb{C}^d / \Lambda$ of complex Lie groups induced from $\widetilde{\psi}$. 
  
 \begin{lemma} \label{lmalbfxbsmp}
 	Fixing a basis $\{ \zeta_j \}_{j=1}^d$ for $H^0(X, \Omega_X^1)$ and generators $ \{ [\gamma_j] \}_{j=1}^{2d}$ of $H_1(X, \mathbb{Z})_{\rm free}$ determines an isomorphism $\psi \colon \mathrm{Alb} (X) \isom \mathbb{C}^d / \Lambda$ of complex Lie groups, where we write $(z_1 , \dots , z_d)$ for the global holomorphic coordinates for $\mathbb{C}^d$ given by the basis dual to $\{ \zeta_j \}_{j=1}^d$, such that the map
 	\begin{equation*}
 		{\rm alb}^* \circ \psi^* \colon H^0( \mathbb{C}^d / \Lambda , \Omega_{\mathbb{C}^d / \Lambda}^1) \isom H^0 (X, \Omega_X^1)
 	\end{equation*}
 	sends $d z_j$ (resp.~$d \overline{z_j}$) to $\zeta_j$ (resp.~$\overline{\zeta_j}$) for all $j=1 , \dots , d$.
 \end{lemma}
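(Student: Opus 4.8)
The plan is to build $\psi$ directly from the linear isomorphism $\widetilde{\psi}$ already fixed, and then to identify the composite $\mathrm{alb}^* \circ \psi^*$ with the pullback along a single holomorphic map $X \to \mathbb{C}^d/\Lambda$ whose coordinates are Abel--Jacobi type integrals. First I would observe that, by the defining relation $\int_{\gamma_\nu} = \sum_{j} (\lambda_\nu)_j \zeta_j^\vee$ of $\Lambda$, the functional attached to $[\gamma_\nu] \in H_1(X,\mathbb{Z})_{\rm free}$ is exactly $\sum_j (\lambda_\nu)_j \zeta_j^\vee$, so $\widetilde{\psi}$ sends the generator $[\gamma_\nu]$ to $\lambda_\nu$. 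Hence the $\mathbb{C}$-linear isomorphism $\widetilde{\psi}\colon H^0(X,\Omega^1_X)^\vee \isom \mathbb{C}^d$ carries the lattice $H_1(X,\mathbb{Z})_{\rm free}$ isomorphically onto $\Lambda$, and therefore descends to a biholomorphic group isomorphism $\psi\colon \mathrm{Alb}(X) \isom \mathbb{C}^d/\Lambda$ on the quotients. This settles the first assertion.

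Next I would set $\Phi := \psi \circ \mathrm{alb}\colon X \to \mathbb{C}^d/\Lambda$, so that $\mathrm{alb}^* \circ \psi^* = \Phi^*$. Unwinding the definitions, for $x \in X$ the Albanese image $\mathrm{alb}(x)$ is the functional $L_x\colon \zeta \mapsto \int_*^x \zeta$, which in the dual basis reads $L_x = \sum_j \big(\int_*^x \zeta_j\big)\, \zeta_j^\vee$; applying $\widetilde{\psi}$ shows that the $j$-th coordinate of $\Phi(x)$ is $z_j \circ \Phi = \int_*^x \zeta_j$ (well defined modulo the periods in $\Lambda$). Since $dz_1,\dots,dz_d$ span $H^0(\mathbb{C}^d/\Lambda, \Omega^1_{\mathbb{C}^d/\Lambda})$ as the translation-invariant holomorphic $1$-forms descending from $\mathbb{C}^d$, it suffices to compute $\Phi^* dz_j$.

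Finally I would differentiate: locally on $X$ the multivalued function $x \mapsto \int_*^x \zeta_j$ has a single-valued holomorphic branch whose differential is the integrand $\zeta_j$, so $\Phi^* dz_j = d\big(\int_*^x \zeta_j\big) = \zeta_j$; the ambiguity by periods is precisely what is killed by passing to $\mathbb{C}^d/\Lambda$, which is why $\Phi$, and hence the pullback, is globally well defined. Conjugating, $\Phi^* d\overline{z_j} = \overline{\Phi^* dz_j} = \overline{\zeta_j}$, which gives the stated action on both $dz_j$ and $d\overline{z_j}$. The only delicate point is making the differentiation of the multivalued integral rigorous, and I expect this to be routine once the computation is localised as above; the main work is really the bookkeeping of identifying $\Phi$ and its coordinates rather than any genuine analytic difficulty.
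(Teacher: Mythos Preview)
Your proof is correct and takes a different, more direct route than the paper. You compose $\Phi = \psi \circ \mathrm{alb}$, write down its coordinate functions explicitly as the Abel--Jacobi integrals $x \mapsto \int_*^x \zeta_j$, and then differentiate to get $\Phi^* dz_j = \zeta_j$; this is essentially the classical fundamental-theorem-of-calculus argument. The paper instead argues abstractly via functoriality of the Albanese construction (citing \cite{Huybrechts}): since the Albanese map of a torus is the identity, the isomorphism $\psi$ is induced by the dual map $(\psi^*)^\vee$, and composing with $(\mathrm{alb}^*)^\vee$ forces $\widetilde{\psi} = (\mathrm{alb}^* \circ \psi^*)^\vee$; the claim then falls out by double duality. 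Your approach is more elementary and self-contained, with no external references needed beyond the definitions, while the paper's approach avoids the local computation with multivalued integrals at the cost of invoking categorical properties of $\mathrm{Alb}$. Both are perfectly valid; your version is arguably the more transparent of the two for this particular statement.
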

 
 \begin{proof}
 	By the functoriality of the Albanese map \cite[Proposition 3.3.8]{Huybrechts}, and recalling that the Albanese map is the identity for tori \cite[Example 3.3.9]{Huybrechts}, we find that the map $\psi \colon \mathrm{Alb} (X) \isom \mathbb{C}^d / \Lambda$ agrees with the one induced by the dual map
 \begin{equation*}
 	(\psi^*)^{\vee} \colon H^0 ( \mathrm{Alb} (X) , \Omega_{\mathrm{Alb} (X)}^1)^{\vee} \isom H^0 (\mathbb{C}^d / \Lambda , \Omega_{\mathbb{C}^d / \Lambda}^1)^{\vee}.
 \end{equation*}
 Combined with the natural isomorphism $H^0 (\mathbb{C}^d / \Lambda , \Omega_{\mathbb{C}^d / \Lambda}^1)^{\vee} = (\mathbb{C}^d)^{\vee \vee} = \mathbb{C}^d$ and the dual isomorphism $({\rm alb}^*)^{\vee} \colon H^0 (X, \Omega_X^1)^{\vee} \isom H^0 ( \mathrm{Alb} (X) , \Omega_{\mathrm{Alb} (X)}^1)^{\vee}$, we find that the map 
 \begin{equation*}
 	(\psi^*)^{\vee} \circ ({\rm alb}^*)^{\vee} \colon H^0(X, \Omega_X^1)^{\vee} \isom \mathbb{C}^d
 \end{equation*}
 must induce $\psi$, but this simply means $\widetilde{\psi} = ({\rm alb}^* \circ \psi^*)^{\vee}$. In particular, for all $j=1 , \dots , d$ we find $({\rm alb}^* \circ \psi^*)^{\vee} ( \zeta^{\vee}_j ) = z_j$, which in turn implies that the map
 \begin{equation*}
 	{\rm alb}^* \circ \psi^* \colon H^0( \mathbb{C}^d / \Lambda , \Omega_{\mathbb{C}^d / \Lambda}^1) = (\mathbb{C}^d)^{\vee} \isom H^0 (X, \Omega_X^1)^{\vee \vee} = H^0 (X, \Omega_X^1)
 \end{equation*}
 sends $d z_j$ to $\zeta_j$, by noting that the natural isomorphism $H^0( \mathbb{C}^d / \Lambda , \Omega_{\mathbb{C}^d / \Lambda}^1) = (\mathbb{C}^d)^{\vee}$ sends $\{ dz_j \}_{j=1}^d$ to the basis for $(\mathbb{C}^d)^{\vee}$ that is dual to $(z_1 , \dots , z_d )$. We also find
 \begin{equation*}
 	{\rm alb}^* \circ \psi^* ( d \overline{z_j} ) = \overline{\zeta_j}
 \end{equation*}
  for all $j=1 , \dots , d$, since the Albanese map and $\psi$ are both holomorphic.
 \end{proof}
 
 In what follows, we may identify $\mathrm{Alb} (X) = \mathbb{C}^d / \Lambda$ without explicitly writing down the isomorphism $\psi$, as we always work with the fixed basis $\{ \zeta_j \}_{j=1}^d$; in this notation the above lemma states
 \begin{equation} \label{eqalbfxbsmp}
 	{\rm alb}^* ( d z_j ) = \zeta_j , \quad {\rm alb}^* ( d \overline{z_j} ) = \overline{\zeta_j}
 \end{equation}
 for all $j=1 , \dots , d$.

We recall that $\mathrm{Pic}^0 (\mathrm{Alb}(X))$ is well-known to be isomorphic to $\mathrm{Pic}^0(X)$, as written e.g.~in \cite{GrHar}. For the reader's convenience we provide a detailed proof of this fact, and confirm that the isomorphism is given by the Albanese map. 

\begin{proposition}\label{prop:alb}
The morphism ${\rm alb}^*\colon {\rm Pic}^0(\mathrm{Alb} (X)) \to {\rm Pic}^0(X)$ in the category of abelian groups induced by the Albanese map is an isomorphism. 
\end{proposition}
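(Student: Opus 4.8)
The plan is to realise ${\rm alb}^*$ through the long exact cohomology sequences attached to the exponential sheaf sequence $0 \to \mathbb{Z} \to \mathcal{O} \to \mathcal{O}^* \to 0$ on $X$ and on $\mathrm{Alb}(X)$, which are compatible under pullback by the (holomorphic) Albanese map. Recalling that the first Chern class map is the connecting homomorphism, one has $\mathrm{Pic}^0(X) = H^1(X, \mathcal{O}_X)/\,{\rm im}\,H^1(X, \mathbb{Z})$ and likewise $\mathrm{Pic}^0(\mathrm{Alb}(X)) = H^1(\mathrm{Alb}(X), \mathcal{O})/\,{\rm im}\,H^1(\mathrm{Alb}(X), \mathbb{Z})$, and by functoriality of the exponential sequence the induced pullback ${\rm alb}^*$ on $\mathrm{Pic}^0$ is the map on these quotients coming from the pullbacks on $H^1(-, \mathbb{Z})$ and $H^1(-, \mathcal{O})$. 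It therefore suffices to show that both of these pullback maps are isomorphisms and that they are compatible with the lattice inclusions.

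First I would treat the map on $H^1(-, \mathcal{O})$. By Dolbeault and Hodge theory $H^1(-, \mathcal{O}) \cong H^{0,1}(-)$, and since ${\rm alb}$ is holomorphic its pullback commutes with complex conjugation, so the action of ${\rm alb}^*$ on $H^{0,1}$ is the complex conjugate of its action on $H^{1,0} = H^0(-, \Omega^1)$. The latter is an isomorphism by Lemma \ref{lmalbfxbsmp} (equivalently by the preceding isomorphism ${\rm alb}^* \colon H^0(\mathrm{Alb}(X), \Omega^1_{\mathrm{Alb}(X)}) \isom H^0(X, \Omega^1_X)$), and conjugating yields ${\rm alb}^* \colon H^1(\mathrm{Alb}(X), \mathcal{O}) \isom H^1(X, \mathcal{O}_X)$.

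Next I would treat the integral lattice. Using the universal coefficient identification $H^1(-, \mathbb{Z}) \cong \mathrm{Hom}(H_1(-, \mathbb{Z}), \mathbb{Z})$, the map ${\rm alb}^*$ is the $\mathbb{Z}$-dual of ${\rm alb}_* \colon H_1(X, \mathbb{Z}) \to H_1(\mathrm{Alb}(X), \mathbb{Z})$. By the very construction of the Albanese variety in \eqref{eqdfalbv}, the lattice $H_1(\mathrm{Alb}(X), \mathbb{Z})$ is $H_1(X, \mathbb{Z})_{\rm free}$ and ${\rm alb}_*$ is identified with the natural projection $H_1(X, \mathbb{Z}) \surj H_1(X, \mathbb{Z})_{\rm free}$; since $\mathrm{Hom}(-, \mathbb{Z})$ only detects the free quotient, its dual is an isomorphism $H^1(\mathrm{Alb}(X), \mathbb{Z}) \isom H^1(X, \mathbb{Z})$.

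Finally, the two isomorphisms are compatible with the (injective) lattice maps into $H^1(-, \mathcal{O})$ by commutativity of the exponential-sequence ladder, so ${\rm alb}^*$ carries the period lattice of $\mathrm{Pic}^0(\mathrm{Alb}(X))$ bijectively onto that of $\mathrm{Pic}^0(X)$ and descends to an isomorphism of the quotients, i.e. an isomorphism of abelian groups (indeed of complex tori). The step requiring the most care is the identification of ${\rm alb}_*$ on $H_1$ with the projection onto the free part: this is exactly what guarantees that no finite-index discrepancy arises between the two lattices, so that the torsion in $H^2(X,\mathbb{Z})$ — which distinguishes $\mathcal{P}(X)$ from ${\rm Pic}^0(X)$ but is invisible to $\mathrm{Pic}^0$ — cannot obstruct surjectivity. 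Everything else is routine diagram-chasing once the isomorphism on $H^{0,1}$ is in hand.
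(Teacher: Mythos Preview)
Your proof is correct and takes a genuinely different route from the paper's. You work through the exponential sheaf sequence, showing that ${\rm alb}^*$ induces isomorphisms on both $H^1(-,\mathcal{O})$ (via Hodge theory and conjugation from the known isomorphism on $H^{1,0}$) and on $H^1(-,\mathbb{Z})$ (via universal coefficients and the identification of ${\rm alb}_*$ on $H_1$ with the projection onto the free part), then pass to the quotient by naturality. The paper instead uses the monodromy isomorphism $m_Y\colon \mathcal{P}(Y)\isom {\rm Hom}_{\rm group}(\pi_1(Y,*),{\rm U}(1))$ on both $Y=X$ and $Y={\rm Alb}(X)$, combines it with the Hurewicz theorem and the isomorphism ${\rm alb}_*\colon H_1(X,\mathbb{Z})_{\rm free}\isom H_1({\rm Alb}(X),\mathbb{Z})$, and then uses a cardinality argument comparing $H_1(X,\mathbb{Z})_{\rm tor}$ with $\mathcal{P}(X)/{\rm Pic}^0(X)$ to show that the image lands exactly in ${\rm Pic}^0(X)$ rather than elsewhere in $\mathcal{P}(X)$. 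Your approach is the more classical one and is arguably cleaner for the bare statement; the paper's representation-theoretic approach, however, is tailored to what follows, since the explicit ${\rm U}(1)$-representations $\rho$ and the monodromy framework are used throughout \S\ref{section:elrtori}--\S\ref{section:dboflbpdb} to build the perturbed $\delbar$-operators.
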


\begin{proof}
For any compact K\"ahler manifold $Y$, we have an isomorphism of abelian groups
\begin{equation} \label{eqmdmmy}
	m_Y \colon \mathcal{P}(Y) \isom {\rm Hom}_{\rm group}(\pi_1(Y, *), {\rm U(1)})
\end{equation}
defined by the monodromy with respect to the Chern connection of a flat metric \cite[Proposition 2.2]{Koike}, where $\pi_1(Y, *)$ is the fundamental group of $Y$ with a fixed basepoint $*$. Note moreover that ${\rm alb}_*\colon H_1(X, \mathbb{Z})_{\rm free} \to H_1(\mathrm{Alb} (X), \mathbb{Z})$ is also an isomorphism, since for each loop $\gamma_\nu\colon [0, 1]\to X$ which represents the class $[\gamma_\nu] \in H_1 (X, \mathbb{Z})_{\rm free}$,
\[
 {\rm alb}\circ\gamma_\nu(t) = \left[ \zeta \mapsto \int_{\gamma_\nu|_t} \zeta \right]\in \mathrm{Alb} (X)\quad \left(\gamma_\nu|_t\colon [0, t]\ni s\mapsto \gamma(s)\in X\right)
\]
holds for each $t\in [0, 1]$, from which we immediately conclude that ${\rm alb}_*$ bijectively maps the generators of $H_1 (X, \mathbb{Z})_{\rm free}$ to the ones of $H_1(\mathrm{Alb} (X), \mathbb{Z})$, by the definition (\ref{eqdfalbv}). When we fix a basepoint $* \in X$ and ${\rm alb} (*) \in \mathrm{Alb} (X)$, the Hurewicz theorem implies that $H_1(\mathrm{Alb} (X), \mathbb{Z})$ agrees with the abelianisation of $\pi_1( \mathrm{Alb} (X), {\rm alb} (*))$, which gives us a natural isomorphism
\begin{equation*}
	{\rm Hom}_{\rm group}(\pi_1(\mathrm{Alb} (X), {\rm alb} (*)), {\rm U(1)}) \cong {\rm Hom}_{\rm group}(H_1(\mathrm{Alb} (X), \mathbb{Z}), {\rm U(1)}).
\end{equation*}
We apply the same argument to $\pi_1(X, *)$ to get another natural isomorphism
\begin{equation*}
	{\rm Hom}_{\rm group}(\pi_1(X, *), {\rm U(1)}) \cong {\rm Hom}_{\rm group}(H_1 (X, \mathbb{Z}), {\rm U(1)}). 
\end{equation*}

We thus get a sequence of isomorphisms
\begin{displaymath}
		\xymatrixcolsep{4pc}\xymatrixrowsep{4pc}\xymatrix{{\rm Hom}_{\rm group}(H_1(\mathrm{Alb} (X), \mathbb{Z}), {\rm U(1)}) \ar@{->}[r]^-{({\rm alb}_*)^{\vee}}_-{\cong} &  {\rm Hom}_{\rm group}(H_1 (X, \mathbb{Z})_{\rm free}, {\rm U(1)}) \ar@{_{(}-_>}[d]^-{m_X^{-1}}  \\
		 \mathcal{P}(\mathrm{Alb} (X)) \ar@{-->}[r] \ar@{->}[u]^-{m_{\mathrm{Alb} (X)}}_-{\cong} &  \mathcal{P} (X) 
} 
\end{displaymath}
where the dashed arrow above is defined by the composition of other isomorphisms, since \linebreak${\rm Hom}_{\rm group}(H_1 (X, \mathbb{Z})_{\rm free}, {\rm U(1)})$ can be naturally regarded as a subset of ${\rm Hom}_{\rm group}(H_1 (X, \mathbb{Z}), {\rm U(1)})$. 
As $\mathcal{P}(\mathrm{Alb} (X))={\rm Pic}^0(\mathrm{Alb} (X))$ holds by Lemma \ref{lem:kahler_pic0_p_relation} and the dashed arrow clearly agrees with ${\rm alb}^*$ by unravelling the definition of $m_X$ and $m_{\mathrm{Alb} (X)}$ (see e.g.~\cite[\S 2.1]{Koike}), 
it is sufficient to show that the image $I$ of the dashed arrow coincides with ${\rm Pic}^0(X)$, or equivalently, that
\begin{equation}\label{eq:isom_m_X_Pic0}
m_X^{-1}\colon {\rm Hom}_{\rm group}(H_1 (X, \mathbb{Z})_{\rm free}, {\rm U(1)}) \isom {\rm Pic}^0(X)
\end{equation}
is an isomorphism. 

As $\mathcal{P}(\mathrm{Alb} (X))={\rm Pic}^0(\mathrm{Alb} (X))$, it is clear that $I\subset {\rm Pic}^0(X)$ 
since the pull-back of a topologically trivial line bundle is topologically trivial. 
Therefore, in what follows, we assume for contradiction that $I\subsetneq {\rm Pic}^0(X)$. 
A standard argument using the exponential exact sequence implies that, for any torsion element $\alpha$ of $H^2 (X, \mathbb{Z})$, there exists an element $F \in {\rm Pic}(X)$ such that $\alpha=c_1^{\mathbb{Z}} (F)$, and also that such a line bundle $F$ is unique up to tensoring with an element of ${\rm Pic}^0(X)$. 
Note that such $F$ is an element of $\mathcal{P}(X)$ since $c_1(F)=0$. 
Therefore, combined with the isomorphism $H^2(X, \mathbb{Z})_{\rm tor}\cong H_1 (X, \mathbb{Z})_{\rm tor}$, which follows from the universal coefficient theorem, one obtains an injection $H_1 (X, \mathbb{Z})_{\rm tor} \to \mathcal{P}(X) / {\rm Pic}^0(X)$. 
From this injection and the injection $c_1^{\mathbb{Z}} \colon  \mathcal{P}(X) / {\rm Pic}^0(X) \to H_1 (X, \mathbb{Z})_{\rm tor} $, it turns out that the cardinalities of finite sets $H_1 (X, \mathbb{Z})_{\rm tor} $ and $ \mathcal{P}(X) / {\rm Pic}^0(X)$ coincide: 
\begin{equation}\label{eq_cardinality_h1_quot_of_p}
\#H_1 (X, \mathbb{Z})_{\rm tor} =\#\left( \mathcal{P}(X) / {\rm Pic}^0(X)\right). 
\end{equation}
Recall that we are assuming that $m_X^{-1}( {\rm Hom}_{\rm group}(H_1 (X, \mathbb{Z})_{\rm free}, {\rm U(1)})) \subsetneq {\rm Pic}^0(X)$. Thus it holds that 
\begin{equation*}
	m_X^{-1}( {\rm Hom}_{\rm group}(H_1 (X, \mathbb{Z}), {\rm U(1)}))/ m_X^{-1}( {\rm Hom}_{\rm group}(H_1 (X, \mathbb{Z})_{\rm free}, {\rm U(1)})) \supsetneq \mathcal{P}(X) / {\rm Pic}^0(X), 
\end{equation*}
which contradicts (\ref{eq_cardinality_h1_quot_of_p}) since the cardinality of the left hand side coincides with that of $H_1 (X, \mathbb{Z})_{\rm tor}$. 
\end{proof}

\subsection{Elementary results for tori}\label{section:elrtori}
We start with various elementary, but hopefully not entirely trivial, results in linear algebra. For the moment they are completely unmotivated, and are meant to serve as a general nonsense that is useful later.

\begin{lemma} \label{lmelivalmd}
Suppose that $\Lambda=\langle \lambda_1, \lambda_2, \dots, \lambda_{2d}\rangle$ is a lattice generated by a basis $\{ \lambda_k \}_{k=1}^{2d} \subset \mathbb{C}^d$ for the $\mathbb{R}$-vector space $\mathbb{C}^d$. Writing	$\lambda_1, \lambda_2, \dots, \lambda_{2d}$ as a a column vector consisting of $d$ complex numbers, the following real $2d \times 2d$ matrix 
\begin{equation*}
	A_{\Lambda} := \left(
\begin{array}{ccccc}
 & \mathrm{Re} \left( {^t\!\lambda_1} \right) & &  \mathrm{Im} \left( {^t\!\lambda_1} \right) & \\
 & \vdots & & \vdots & \\
 & \mathrm{Re} \left( {^t\!\lambda_{2d}} \right) & &  \mathrm{Im} \left( {^t\!\lambda_{2d}} \right) & \\
\end{array}
\right)
\end{equation*}
is invertible.
\end{lemma}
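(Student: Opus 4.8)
The plan is to exhibit $A_{\Lambda}$ as the matrix representing a basis under a fixed $\mathbb{R}$-linear isomorphism $\mathbb{C}^d \cong \mathbb{R}^{2d}$, so that its invertibility becomes the statement that the image of a basis is again a basis. First I would introduce the map
\begin{equation*}
	\Phi \colon \mathbb{C}^d \to \mathbb{R}^{2d}, \quad \Phi(z) := \left( \mathrm{Re}(z_1) , \dots , \mathrm{Re}(z_d) , \mathrm{Im}(z_1) , \dots , \mathrm{Im}(z_d) \right),
\end{equation*}
and observe that $\Phi$ is $\mathbb{R}$-linear and bijective (its inverse sends $(a , b) \in \mathbb{R}^d \times \mathbb{R}^d$ to $a + \sqrt{-1}\, b \in \mathbb{C}^d$), hence an isomorphism of $2d$-dimensional $\mathbb{R}$-vector spaces.

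The key observation is then purely bookkeeping: by the way $A_{\Lambda}$ is written, its $k$-th row is exactly $\Phi(\lambda_k)$ for $k = 1 , \dots , 2d$, where we read $\lambda_k$ as a column vector in $\mathbb{C}^d$. Since $\{ \lambda_k \}_{k=1}^{2d}$ is by hypothesis an $\mathbb{R}$-basis of $\mathbb{C}^d$ and $\Phi$ is an $\mathbb{R}$-linear isomorphism, the collection $\{ \Phi(\lambda_k) \}_{k=1}^{2d}$ is an $\mathbb{R}$-basis of $\mathbb{R}^{2d}$. Therefore the $2d$ rows of $A_{\Lambda}$ are linearly independent over $\mathbb{R}$, so $A_{\Lambda}$ has full rank and is invertible.

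If one prefers to avoid naming $\Phi$ explicitly, the same content can be phrased as a one-line contradiction: a nontrivial real linear relation among the rows of $A_{\Lambda}$ would produce real scalars $c_1 , \dots , c_{2d}$, not all zero, with $\sum_k c_k \, \mathrm{Re}(\lambda_k) = 0$ and $\sum_k c_k \, \mathrm{Im}(\lambda_k) = 0$, hence $\sum_k c_k \lambda_k = 0$ in $\mathbb{C}^d$, contradicting the $\mathbb{R}$-linear independence of $\{ \lambda_k \}$. I do not anticipate a genuine obstacle here; the only point requiring a little care is matching the block ordering of the real and imaginary parts in the definition of $A_{\Lambda}$ with the chosen ordering of coordinates in $\Phi$, which affects the presentation but not the conclusion.
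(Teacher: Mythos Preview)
Your argument is correct and is essentially the same as the paper's: the paper observes in one line that a nontrivial row dependence in $A_{\Lambda}$ would give a nontrivial $\mathbb{R}$-linear relation among the $\lambda_k$, which is exactly your ``one-line contradiction'' version. Your $\Phi$ formulation is just a slightly more explicit packaging of the same idea.
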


\begin{proof}
	Any zero row after elementary row operations on $A_{\Lambda}$ means non-trivial dependence relationship between $\lambda_1 , \dots , \lambda_{2d}$ over $\mathbb{R}$, which is a contradiction.
\end{proof}

\begin{remark}
	Note that the inverse matrix $A_{\Lambda}$ can be written in terms of the dual $\mathbb{R}$-basis $\{ \lambda^{\vee}_l \}_{l=1}^{2d}$ for $(\mathbb{C}^d)^{\vee}$, satisfying $\mathrm{Re} \left( {^t\!\lambda_j} \right) \cdot \mathrm{Re} \left( \lambda^{\vee}_l \right) + \mathrm{Im} \left( {^t\!\lambda_j} \right) \cdot \mathrm{Im} \left( \lambda^{\vee}_l \right) = \delta_{jl}$ for $j,l=1, \dots, 2d$, as
	\begin{equation} \label{eqivalmddl}
		A^{-1}_{\Lambda} =
		\left(
\begin{array}{ccccc}
 & \mathrm{Re} \left( \lambda^{\vee}_1 \right) & &  \mathrm{Re} \left( \lambda^{\vee}_{2d} \right) & \\
 & \mathrm{Im} \left( \lambda^{\vee}_1 \right) & &  \mathrm{Im} \left( \lambda^{\vee}_{2d} \right) & \\
\end{array}
\right).
	\end{equation}
\end{remark}

\begin{definition}
	Let $B \colon \mathbb{R}^{2d} = \mathbb{R}^d \times \mathbb{R}^d \to \mathbb{C}^d$ be a map defined by
\begin{equation*}
B\left(
\begin{array}{c}
a_1 \\ \vdots \\ a_d \\ 
b_1 \\ \vdots \\ b_d 
\end{array}
\right) = 
\frac{1}{2} \left(
\begin{array}{c}
a_1 + \ai b_1 \\ \vdots \\ a_d  + \ai b_d
\end{array}
\right),
\end{equation*}
which is clearly a linear isomorphism over $\mathbb{R}$. We define an $\mathbb{R}$-linear map
\begin{equation*}
	\bm{c} \colon \ai \mathbb{R}^{2d} \to \mathbb{C}^d
\end{equation*}
by 
\begin{equation*}
	\bm{c} ( \bm{s}):= \sqrt{-1} B \circ A^{-1}_{\Lambda} ( - \ai \bm{s})
\end{equation*}
for $\bm{s} \in \ai \mathbb{R}^{2d}$.
\end{definition}

Note that we can write $B$ as a $d \times 2d$ matrix $B = \frac{1}{2} \begin{pmatrix} I_d & \ai I_d \end{pmatrix}$, where $I_d$ is the $d \times d$ identity matrix and the usual multiplications of matrices is understood. 

The above definition looks artificial but it is useful for the computation later. We also have the following formula which may be conceptually more pleasing.

\begin{lemma} \label{lmexfcbdbs}
	Suppose that we identify $\mathbb{C}^d$ with its dual via the $\mathbb{R}$-bases $\{ \lambda_j \}_{j=1}^{2d}$ for $\mathbb{C}^d$ and $\{ \lambda^{\vee}_l \}_{l=1}^{2d}$ for $(\mathbb{C}^d)^{\vee}$. We then have 
	\begin{equation*}
		\bm{c} (\bm{s}) = \frac{1}{2} \sum_{l=1}^{2d} s_l \lambda^{\vee}_l \in (\mathbb{C}^d)^{\vee} \cong \mathbb{C}^d
	\end{equation*}
	where $\bm{s} = {^t(s_1 , \dots , s_{2d})} \in \ai \mathbb{R}^{2d}$.
\end{lemma}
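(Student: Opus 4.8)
The plan is to verify the identity by a direct computation, substituting the explicit matrix expression \eqref{eqivalmddl} for $A_\Lambda^{-1}$ and the matrix form $B=\frac{1}{2}\begin{pmatrix} I_d & \ai I_d\end{pmatrix}$ into the definition of $\bm{c}$. The whole proof amounts to careful bookkeeping of the $\ai$-factors together with the identification of $(\mathbb{C}^d)^{\vee}$ with $\mathbb{C}^d$, under which $\lambda_l^{\vee}=\mathrm{Re}(\lambda_l^{\vee})+\ai\,\mathrm{Im}(\lambda_l^{\vee})$.

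First I would write $\bm{s}={}^t(s_1,\dots,s_{2d})$, so that $-\ai\bm{s}$ is the real column vector with entries $-\ai s_l$ (recall $s_l\in\ai\mathbb{R}$, hence $-\ai s_l\in\mathbb{R}$). Applying $A_\Lambda^{-1}$ in the form \eqref{eqivalmddl}, whose $l$-th column is ${}^t(\mathrm{Re}(\lambda_l^{\vee}),\mathrm{Im}(\lambda_l^{\vee}))$, I obtain the real $2d$-vector whose upper $d$ entries form $\sum_{l=1}^{2d}(-\ai s_l)\,\mathrm{Re}(\lambda_l^{\vee})$ and whose lower $d$ entries form $\sum_{l=1}^{2d}(-\ai s_l)\,\mathrm{Im}(\lambda_l^{\vee})$.

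Next I would apply $B$, which by its matrix form reassembles the upper and lower halves into a single vector of $\mathbb{C}^d$ by taking one half of the sum of the upper half and $\ai$ times the lower half. This yields
\[
B\circ A_\Lambda^{-1}(-\ai\bm{s})=\frac{1}{2}\sum_{l=1}^{2d}(-\ai s_l)\bigl(\mathrm{Re}(\lambda_l^{\vee})+\ai\,\mathrm{Im}(\lambda_l^{\vee})\bigr)=\frac{1}{2}\sum_{l=1}^{2d}(-\ai s_l)\,\lambda_l^{\vee},
\]
where the last equality is precisely the identification $(\mathbb{C}^d)^{\vee}\cong\mathbb{C}^d$. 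Multiplying by $\sqrt{-1}$ as in the definition of $\bm{c}$ and using $\ai\cdot(-\ai)=1$ gives $\bm{c}(\bm{s})=\frac{1}{2}\sum_{l=1}^{2d}s_l\lambda_l^{\vee}$, as claimed.

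Since every step is an explicit manipulation, there is no genuine obstacle; the only point requiring care is to confirm that the identification used to write $A_\Lambda^{-1}$ in \eqref{eqivalmddl} (where the columns record the real and imaginary parts of $\lambda_l^{\vee}$) is the same identification under which the conclusion is stated, so that the reassembly performed by $B$ matches $\lambda_l^{\vee}$ on the nose.
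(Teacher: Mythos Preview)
Your proof is correct and takes essentially the same approach as the paper, which simply states that the claim follows immediately from the explicit formula \eqref{eqivalmddl} for $A_\Lambda^{-1}$ in terms of the dual basis. You have spelled out in detail exactly the computation the paper leaves implicit.
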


\begin{proof}
	The claim follows immediately from the explicit formula (\ref{eqivalmddl}) using the dual $\mathbb{R}$-basis.
\end{proof}

\begin{lemma} \label{lmltcdltc}
	The map $\bm{c} \colon \ai \mathbb{R}^{2d} \to \mathbb{C}^d$ defined above is a linear isomorphism over $\mathbb{R}$. In particular,
	\begin{equation*}
		\Lambda_{\bm{c}} := \bm{c} (2 \pi \ai \mathbb{Z}^{2d}) \cong \mathbb{Z}^{2d}
	\end{equation*}
	defines a lattice in $\mathbb{C}^d$. Moreover, in the notation of Lemma \ref{lmexfcbdbs}, we have $\Lambda_{\bm{c}} \cong \ai \pi \Lambda^{\vee}$ where $\Lambda^{\vee}=\langle \lambda_1^{\vee}, \lambda_2^{\vee}, \dots, \lambda_{2d}^{\vee}\rangle$ is the dual lattice of $\Lambda$.
\end{lemma}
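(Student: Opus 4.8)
The plan is to reduce everything to the explicit formula furnished by Lemma \ref{lmexfcbdbs}, which expresses $\bm{c}$ directly in terms of the dual basis, after which all three assertions become essentially bookkeeping. First I would verify that $\bm{c}$ is an $\mathbb{R}$-linear isomorphism. This can be seen in two equivalent ways: either by observing that, by its very definition, $\bm{c}$ is a composition of the $\mathbb{R}$-linear isomorphisms $\bm{s} \mapsto -\ai \bm{s}$ (from $\ai\mathbb{R}^{2d}$ onto $\mathbb{R}^{2d}$), $A_\Lambda^{-1}$ (invertible by Lemma \ref{lmelivalmd}), $B$ (an isomorphism by definition), and multiplication by $\sqrt{-1}$; or, more transparently, by writing $\bm{s} = \ai\, {}^t(t_1, \dots, t_{2d})$ with $t_l \in \mathbb{R}$ and invoking Lemma \ref{lmexfcbdbs} to obtain $\bm{c}(\bm{s}) = \frac{\ai}{2}\sum_{l=1}^{2d} t_l \lambda_l^\vee$. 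Since $\{\lambda_j\}_{j=1}^{2d}$ is an $\mathbb{R}$-basis of $\mathbb{C}^d$, its dual $\{\lambda_l^\vee\}_{l=1}^{2d}$ is an $\mathbb{R}$-basis of $(\mathbb{C}^d)^\vee \cong \mathbb{C}^d$, so $(t_1, \dots, t_{2d}) \mapsto \sum_l t_l \lambda_l^\vee$ is an $\mathbb{R}$-linear isomorphism onto $\mathbb{C}^d$; composing with the isomorphisms $\bm{s}\mapsto (t_1,\dots,t_{2d})$ and multiplication by $\ai/2$ gives the claim.

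Second, I would compute $\Lambda_{\bm{c}}$ directly from the same formula. An arbitrary element of $2\pi\ai\mathbb{Z}^{2d}$ has the form $\bm{s} = 2\pi\ai\, {}^t(n_1, \dots, n_{2d})$ with $n_l \in \mathbb{Z}$, so that $s_l = 2\pi\ai n_l$. Substituting into Lemma \ref{lmexfcbdbs} yields
\begin{equation*}
\bm{c}(\bm{s}) = \frac{1}{2}\sum_{l=1}^{2d} s_l \lambda_l^\vee = \ai\pi \sum_{l=1}^{2d} n_l \lambda_l^\vee ,
\end{equation*}
whence $\Lambda_{\bm{c}} = \{ \ai\pi \sum_l n_l \lambda_l^\vee : (n_l) \in \mathbb{Z}^{2d} \} = \ai\pi\Lambda^\vee$, which establishes the final assertion (indeed as an equality once $(\mathbb{C}^d)^\vee$ is identified with $\mathbb{C}^d$).

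Finally, to conclude that $\Lambda_{\bm{c}}$ is a lattice isomorphic to $\mathbb{Z}^{2d}$, I would note that since $\{\lambda_l^\vee\}$ is an $\mathbb{R}$-basis of $\mathbb{C}^d$, so is $\{\ai\pi\lambda_l^\vee\}$ (multiplication by the nonzero scalar $\ai\pi$ being an $\mathbb{R}$-linear automorphism); hence $\ai\pi\Lambda^\vee$ is a genuine lattice, and the $\mathbb{R}$-linear independence of the $\lambda_l^\vee$ forces the surjection $\mathbb{Z}^{2d}\ni (n_l) \mapsto \ai\pi\sum_l n_l\lambda_l^\vee$ to be injective, giving the group isomorphism $\Lambda_{\bm{c}} \cong \mathbb{Z}^{2d}$. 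There is essentially no hard step here, as the computational content has already been packaged into Lemma \ref{lmexfcbdbs} together with the invertibility of $A_\Lambda$; the only point requiring a little care is to track the imaginary unit correctly through the identification $\ai\mathbb{R}^{2d} \cong \mathbb{R}^{2d}$, so that the scaling factor $\ai\pi$ (rather than $\pi$) emerges in the description of $\Lambda_{\bm{c}}$.
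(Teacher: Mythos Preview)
Your proof is correct and follows essentially the same approach as the paper, which simply notes that the first claim is obvious from $B$ being an $\mathbb{R}$-linear isomorphism and Lemma \ref{lmelivalmd}, and that the identification $\Lambda_{\bm{c}} = \ai\pi\Lambda^\vee$ follows from Lemma \ref{lmexfcbdbs}. Your version spells out the details more explicitly than the paper's one-line proof, but the underlying argument is identical.
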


\begin{proof}
	Obvious from $B$ being a linear isomorphism over $\mathbb{R}$ and Lemma \ref{lmelivalmd}. Note that $\bm{c} (2 \pi \ai \mathbb{Z}^{2d})$ equals $\Lambda_{\bm{c}}$ by Lemma \ref{lmexfcbdbs}.
\end{proof}

The above lemmas indicate an important role played by the dual lattice $\Lambda^{\vee}$. It is indeed possible to write down various formulae below by using $\Lambda^{\vee}$ as opposed to using the slightly artificial definitions of $\bm{c}$ and $\Lambda_{\bm{c}}$, but we will not take this point of view since using the dual lattice involves constantly needing to identify $\mathbb{C}^d$ with its dual and we would like to treat $\bm{c} (\bm{s})$ as a $d$-tuple of complex numbers (rather than a dual vector). 

The following lemma is the key computation.

\begin{lemma} \label{lmkcalcs}
	For any $\bm{s} = {^t (s_1 , \dots , s_d)} \in \ai \mathbb{R}^d$, writing componentwise as $\bm{c} (\bm{s}) = (c_1 (\bm{s}), \dots , c_d (\bm{s}))$\linebreak $\in \mathbb{C}^d$, we have
\begin{equation*}
\left(
\begin{array}{ccccc}
 & {^t \lambda_1} & & {^t \overline{\lambda_1}} & \\
 & \vdots & & \vdots & \\
 & {^t \lambda_{2d}} & & {^t \overline{\lambda_{2d}}} & \\
\end{array}
\right)
\left(
\begin{array}{c}
-\overline{c_{1}(\bm{s})} \\ \vdots \\ -\overline{c_{d}(\bm{s})}\\ 
c_{1}(\bm{s})\\ \vdots \\ c_{d} (\bm{s})
\end{array}
\right)
= \bm{s}.
\end{equation*}
\end{lemma}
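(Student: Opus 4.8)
The plan is to compute the $k$-th component of the left-hand side directly and match it against $s_k$, regarding $\bm{s} = {}^t(s_1,\dots,s_{2d}) \in \ai\rl^{2d}$ so that it lies in the domain of $\bm{c}$. The $k$-th row of the displayed $2d\times 2d$ matrix is $\begin{pmatrix} {}^t\lambda_k & {}^t\overline{\lambda_k}\end{pmatrix}$, while the column vector has upper block $-\overline{\bm{c}(\bm{s})}$ and lower block $\bm{c}(\bm{s})$. Hence the $k$-th entry of the product equals ${}^t\overline{\lambda_k}\,\bm{c}(\bm{s}) - {}^t\lambda_k\,\overline{\bm{c}(\bm{s})}$. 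First I would observe that this has the shape $z - \overline{z}$ with $z := {}^t\overline{\lambda_k}\,\bm{c}(\bm{s})$, so it equals $2\ai\,\mathrm{Im}\big({}^t\overline{\lambda_k}\,\bm{c}(\bm{s})\big)$; in particular it is purely imaginary, which is consistent with the target lying in $\ai\rl^{2d}$.

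Next I would split into real and imaginary parts. Writing $\mathrm{Re}(\bm{c}(\bm{s}))$ and $\mathrm{Im}(\bm{c}(\bm{s}))$ for the real vectors formed by the componentwise real and imaginary parts, one has $\mathrm{Im}\big({}^t\overline{\lambda_k}\,\bm{c}(\bm{s})\big) = \mathrm{Re}({}^t\lambda_k)\cdot\mathrm{Im}(\bm{c}(\bm{s})) - \mathrm{Im}({}^t\lambda_k)\cdot\mathrm{Re}(\bm{c}(\bm{s}))$, where $\cdot$ denotes the standard real inner product on $\rl^d$. To evaluate these I would unwind the definition $\bm{c}(\bm{s}) = \ai\,B\circ A_\Lambda^{-1}(-\ai\bm{s})$. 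Setting $\bm{r} := A_\Lambda^{-1}(-\ai\bm{s})$, which is a genuine real vector in $\rl^{2d}$ because $-\ai\bm{s}\in\rl^{2d}$, and writing $\bm{r} = {}^t\begin{pmatrix} a & b\end{pmatrix}$ with $a,b\in\rl^d$, the formula $B = \tfrac{1}{2}\begin{pmatrix} I_d & \ai I_d\end{pmatrix}$ gives $\bm{c}(\bm{s}) = \tfrac{1}{2}(-b + \ai a)$, so that $\mathrm{Re}(\bm{c}(\bm{s})) = -\tfrac{1}{2}b$ and $\mathrm{Im}(\bm{c}(\bm{s})) = \tfrac{1}{2}a$.

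Substituting these, the $k$-th entry becomes $2\ai\cdot\tfrac{1}{2}\big(\mathrm{Re}({}^t\lambda_k)\cdot a + \mathrm{Im}({}^t\lambda_k)\cdot b\big) = \ai\big(\mathrm{Re}({}^t\lambda_k)\cdot a + \mathrm{Im}({}^t\lambda_k)\cdot b\big)$. The key point is that the bracketed quantity is exactly the $k$-th component of $A_\Lambda\bm{r}$, by the very definition of the rows of $A_\Lambda$. Since $A_\Lambda\bm{r} = A_\Lambda A_\Lambda^{-1}(-\ai\bm{s}) = -\ai\bm{s}$, the $k$-th entry equals $\ai\cdot(-\ai s_k) = s_k$, which is the claim.

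The computation is essentially bookkeeping, so I do not expect a genuine obstacle; the only points requiring care are keeping track of the scalar $\tfrac{1}{2}$ coming from $B$ together with the two factors of $\ai$ (one in the definition of $\bm{c}$ and one in passing from $z-\overline{z}$ to $2\ai\,\mathrm{Im}$), and correctly matching the block structure of $A_\Lambda$ (whose $k$-th row is $(\mathrm{Re}({}^t\lambda_k),\mathrm{Im}({}^t\lambda_k))$) against the split $\bm{r} = {}^t(a\ b)$. As a sanity check, or as an alternative route, one could instead substitute the closed form $\bm{c}(\bm{s}) = \tfrac{1}{2}\sum_l s_l\lambda_l^\vee$ from Lemma \ref{lmexfcbdbs} and invoke the duality relation $\mathrm{Re}({}^t\lambda_k)\cdot\mathrm{Re}(\lambda_l^\vee) + \mathrm{Im}({}^t\lambda_k)\cdot\mathrm{Im}(\lambda_l^\vee) = \delta_{kl}$, which collapses the sum over $l$ to the single term $l=k$ and again yields $s_k$.
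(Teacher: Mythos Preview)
Your proof is correct and follows essentially the same approach as the paper: both reduce the identity to $A_\Lambda A_\Lambda^{-1} = \mathrm{id}$ after unpacking the definition of $\bm{c}$. The only difference is cosmetic --- the paper packages the computation as a matrix factorisation (writing the big $2d\times 2d$ matrix as $A_\Lambda E$ for an explicit block matrix $E$ and verifying a matrix product), whereas you carry out the same calculation componentwise via $z - \overline{z} = 2\ai\,\mathrm{Im}(z)$.
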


\begin{proof}
	We first define a $2d \times 2d$ complex matrix
\begin{equation*}
	E:= \begin{pmatrix} I_d &  I_d \\ \ai I_d & - \ai I_d \end{pmatrix}
\end{equation*}
in terms of block matrices, to observe
\begin{equation*}
\left(
\begin{array}{ccccc}
 & {^t \lambda_1} & & {^t \overline{\lambda_1}} & \\
 & \vdots & & \vdots & \\
 & {^t \lambda_{2d}} & & {^t \overline{\lambda_{2d}}} & \\
\end{array}
\right)
= A_{\Lambda} E .
\end{equation*}
Recalling that we can write $B$ as $\frac{1}{2} \begin{pmatrix} I_d & \ai I_d \end{pmatrix}$, we also find 
\begin{equation*}
\left(
\begin{array}{c}
- \overline{c_{1}(\bm{s})} \\ \vdots \\ - \overline{c_{d}(\bm{s})}\\ 
c_{1}(\bm{s})\\ \vdots \\ c_{d} (\bm{s})
\end{array}
\right)
= \frac{\ai }{2} \begin{pmatrix} I_d & - \ai  I_d \\ I_d & \ai I_d \end{pmatrix} A^{-1}_{\Lambda} ( - \ai \bm{s}),
\end{equation*}
to get
\begin{equation*}
	\frac{\ai}{2} A_{\Lambda} E \begin{pmatrix} I_d & - \ai  I_d \\ I_d & \ai I_d \end{pmatrix} A^{-1}_{\Lambda} ( - \ai \bm{s}) = \bm{s},
\end{equation*}
as claimed.
\end{proof}

We now define a torus 
\begin{equation*}
A:=\mathbb{C}^d/\Lambda , 	
\end{equation*}
where $\Lambda=\langle \lambda_1, \lambda_2, \dots, \lambda_{2d}\rangle$ is a lattice generated by an $\mathbb{R}$-basis $\lambda_1, \lambda_2, \dots, \lambda_{2d}$ of $\mathbb{C}^d$.

\begin{definition}
	We write $\mathcal{R}_{\pi_1} (A)$ for the set of all ${\rm U}(1)$-representations of $\pi_1(A, *)$.
\end{definition}

\begin{remark}
Note that $\mathcal{R}_{\pi_1} (A)$ is naturally an abelian group.
We do not need to pass to the quotient of $\mathcal{R}_{\pi_1} (\mathrm{Alb} (X))$ by the equivalence of representations since ${\rm U}(1)$ is abelian (and hence any equivalence class is a singleton).
\end{remark}

\begin{lemma} \label{lmctrisoqt}
	The map 
	\begin{equation*}
		\bm{c} \colon \mathcal{R}_{\pi_1} (A) \to \mathbb{C}^d / \Lambda_{\bm{c}}
	\end{equation*}
	given (by abuse of notation) by
	\begin{equation*}
		\bm{c} (\rho) := \bm {c} \left( \begin{array}{c} \log \rho (\lambda_1) \pmod{2 \pi \ai \mathbb{Z}} \\ \vdots \\ \log \rho (\lambda_{2d}) \pmod{2 \pi \ai \mathbb{Z}} \end{array} \right) \pmod{\Lambda_{\bm{c}}}
	\end{equation*}
	is well-defined and is an isomorphism of abelian groups.
\end{lemma}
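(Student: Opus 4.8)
The plan is to exhibit the stated map as a composition of three elementary group isomorphisms, the last of which is precisely the descent of the $\mathbb{R}$-linear isomorphism $\bm{c}$ from Lemma \ref{lmltcdltc}. Since $A = \mathbb{C}^d / \Lambda$ has universal cover $\mathbb{C}^d$ with deck group $\Lambda$, the fundamental group $\pi_1(A, *)$ is free abelian of rank $2d$, generated by the loops $[\lambda_1], \dots, [\lambda_{2d}]$ corresponding to the chosen lattice generators. First I would record that, because $\pi_1(A, *)$ is free abelian on these generators, a representation $\rho \in \mathcal{R}_{\pi_1}(A)$ is uniquely and freely determined by the tuple $(\rho(\lambda_1), \dots, \rho(\lambda_{2d})) \in ({\rm U}(1))^{2d}$, and pointwise multiplication of representations corresponds to coordinatewise multiplication; this gives a group isomorphism $\mathcal{R}_{\pi_1}(A) \cong ({\rm U}(1))^{2d}$.

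Next I would use the complex logarithm. For each $t \in {\rm U}(1)$ the quantity $\log t$ lies in $\ai \mathbb{R}$ and is well-defined modulo $2\pi \ai \mathbb{Z}$, and $t \mapsto \log t$ is a group isomorphism ${\rm U}(1) \isom \ai \mathbb{R} / 2\pi \ai \mathbb{Z}$ (multiplicative to additive). Applying this in each of the $2d$ coordinates yields an isomorphism $({\rm U}(1))^{2d} \isom \ai \mathbb{R}^{2d} / 2\pi \ai \mathbb{Z}^{2d}$, which sends $\rho$ to the class of ${}^t(\log \rho(\lambda_1), \dots, \log \rho(\lambda_{2d}))$. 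Note that the target here is exactly the quotient on which the linear map $\bm{c}$ of Lemma \ref{lmltcdltc} naturally acts.

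It then remains to descend $\bm{c}$. By Lemma \ref{lmltcdltc}, $\bm{c} \colon \ai \mathbb{R}^{2d} \to \mathbb{C}^d$ is an isomorphism of $\mathbb{R}$-vector spaces with $\bm{c}(2\pi \ai \mathbb{Z}^{2d}) = \Lambda_{\bm{c}}$, so it passes to an isomorphism of quotient groups $\ai \mathbb{R}^{2d} / 2\pi \ai \mathbb{Z}^{2d} \isom \mathbb{C}^d / \Lambda_{\bm{c}}$. This same fact simultaneously settles well-definedness: the reduction of $\log \rho(\lambda_k)$ modulo $2\pi \ai \mathbb{Z}$ is unambiguous, and its image under $\bm{c}$ modulo $\Lambda_{\bm{c}}$ does not depend on the choice of lifts precisely because $\bm{c}$ carries $2\pi \ai \mathbb{Z}^{2d}$ into $\Lambda_{\bm{c}}$. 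The map in the statement is by construction the composite of these three isomorphisms, and hence is itself a well-defined isomorphism of abelian groups; the homomorphism property is inherited at each stage (freeness for the first step, the logarithm identity for the second, $\mathbb{R}$-linearity of $\bm{c}$ for the third).

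I do not expect a genuine obstacle here: the content is bookkeeping of group structures and an appeal to the already-established Lemma \ref{lmltcdltc}. The only point demanding care is confirming that the generators $[\lambda_1], \dots, [\lambda_{2d}]$ of $\pi_1(A,*)$ are indeed a free basis, so that arbitrary assignments of $({\rm U}(1))$-values extend to (unique) representations; this is exactly what makes the first isomorphism a bijection and is where the torus structure is used.
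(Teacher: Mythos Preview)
Your proof is correct and follows essentially the same approach as the paper. The paper's proof is terser but uses the same two ingredients: well-definedness follows from $\bm{c}(2\pi\ai\mathbb{Z}^{2d}) = \Lambda_{\bm{c}}$, and bijectivity follows from the fact that a ${\rm U}(1)$-representation of $\pi_1(A,*)$ is determined by its values on the free generators $\lambda_1,\dots,\lambda_{2d}$ (the paper phrases this via Hurewicz and $H_1(A,\mathbb{Z}) = \Lambda$, whereas you directly use that $\pi_1(A,*) \cong \Lambda$ is free abelian, which amounts to the same thing for a torus).
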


\begin{proof}
	Note first that for each $k = 1 , \dots , 2d$, $\log \rho (\lambda_k)$ is not well-defined because of the branch locus, but $\log \rho (\lambda_k) \pmod{2 \pi \ai \mathbb{Z}}$ is well-defined and represented by a purely imaginary number. Hence the map $\bm{c}$ is a well-defined homomorphism of abelian groups by the definition of $\Lambda_{\bm{c}}$.
	
	Recall that any ${\rm U}(1)$-representation $\rho \in \mathcal{R}_{\pi_1} (A)$ factors through the abelianisation of $\pi_1 (A, *)$; $\rho$ determines and is determined by a homomorphism of abelian groups $H_1 (A , \mathbb{Z}) = \Lambda \to {\rm U}(1)$; this is a consequence of the Hurewicz theorem, which was already used in the proof of Proposition \ref{prop:alb}. Thus, given a set of generators $\lambda_1, \lambda_2, \dots, \lambda_{2d}$ for the lattice $\Lambda$, we find from the observation above that $\rho \in \mathcal{R}_{\pi_1} (A)$ is uniquely determined by the pairing with $\lambda_1, \lambda_2, \dots, \lambda_{2d}$, showing that $\bm{c}$ is bijective.
\end{proof}

%
\begin{remark} \label{rmfdeqbrlg}
The proof above immediately shows that choosing the fundamental domain of $\mathbb{C}^d / \Lambda_{\bm{c}}$ is equivalent to choosing the branch of $\log \rho$.
\end{remark}
%

As pointed out in the proof of Proposition \ref{prop:alb}, the monodromy map 
\begin{equation*}
	m_A \colon \mathcal{P} (A) \isom \mathcal{R}_{\pi_1} (A)
\end{equation*}
is an isomorphism of abelian groups. We write down this isomorphism more explicitly in the following lemma for the torus $A$; in fact, the proof of \cite[Proposition 2.2]{Koike}, which is used in the proof of Proposition \ref{prop:alb}, relies essentially on the same argument for an arbitrary compact K\"ahler manifold $Y$ to show that the map $m_Y$ in (\ref{eqmdmmy}) is an isomorphism.

\begin{lemma} \label{lmtrmndex}
	For each $F \in \mathcal{P} (A)$ there exists a unique $\rho := m_A (F) \in \mathcal{R}_{\pi_1} (A)$ such that $F$ is isomorphic to the line bundle
	\begin{equation*}
		F_\rho := (\mathbb{C}^d \times \mathbb{C})/\sim_\rho
	\end{equation*}
	over $A = \mathbb{C}^d / \Lambda$, where we define the equivalence relation $\sim_\rho$ by
\begin{equation*}
	(z, \xi) \sim_\rho (z+\lambda,\ \rho(\lambda)\cdot \xi)\quad (\forall\ \lambda\in \Lambda).
\end{equation*}
$F_\rho$ is endowed with a natural flat metric $h_\rho$ defined by
\begin{equation*}
\left|[(z, \xi)]\right|_{h_\rho} := |\xi|.
\end{equation*}

Conversely, for any $\rho \in \mathcal{R}_{\pi_1} (A)$ the line bundle $F_{\rho}$ constructed as above is isomorphic to $m_A^{-1} (\rho) \in \mathcal{P} (A)$.
\end{lemma}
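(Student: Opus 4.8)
The plan is to leverage the fact, already recorded just before the statement, that the monodromy map $m_A \colon \mathcal{P}(A) \isom \mathcal{R}_{\pi_1}(A)$ is an isomorphism of abelian groups. Granting this, the existence of $\rho$ for a given $F$, its uniqueness, and the whole forward direction become formal consequences of the converse direction. So I would concentrate all the work on the converse: showing that for each $\rho \in \mathcal{R}_{\pi_1}(A)$ the quotient $F_\rho$ is a well-defined flat holomorphic line bundle carrying the flat metric $h_\rho$, and that its monodromy is exactly $\rho$, i.e.\ $m_A(F_\rho) = \rho$. Then, given any $F \in \mathcal{P}(A)$, I set $\rho := m_A(F)$ and obtain $m_A(F_\rho) = \rho = m_A(F)$, so injectivity of $m_A$ yields $F \cong F_\rho$; uniqueness of $\rho$ is again injectivity of $m_A$.

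For the construction and well-definedness, I would first recall that the universal cover of $A = \mathbb{C}^d/\Lambda$ is $p_1 \colon \mathbb{C}^d \to A$, whose deck transformation group is canonically identified with $\pi_1(A,*) \cong \Lambda$ acting on $\mathbb{C}^d$ by the translations $z \mapsto z+\lambda$ (the identification $H_1(A,\mathbb{Z}) = \Lambda$ used in Lemma \ref{lmctrisoqt}). The $\Lambda$-action on $\mathbb{C}^d \times \mathbb{C}$ defining $\sim_\rho$ is free, properly discontinuous, and holomorphic, each $\rho(\lambda)$ acting $\mathbb{C}$-linearly on the fibre; hence $F_\rho = (\mathbb{C}^d \times \mathbb{C})/\sim_\rho$ is a complex manifold and the induced projection to $A$ is a holomorphic line bundle. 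This is nothing but the associated bundle $\widetilde{A} \times_\rho \mathbb{C}$ appearing in the proof of Lemma \ref{lmufmcv} and in \cite[Proposition 1.4.21]{Kob}, specialised to the torus. The prescription $|[(z,\xi)]|_{h_\rho} := |\xi|$ is well-defined precisely because $|\rho(\lambda)| = 1$, so that $|\rho(\lambda)\xi| = |\xi|$; it manifestly descends to a smooth Hermitian metric on $F_\rho$.

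For flatness and monodromy, I would use the local trivialisations of $F_\rho$ coming from local sections of $p_1$: their transition functions are the constants $\rho(\lambda)$ of modulus $1$, so by the criterion recalled in \S\ref{section:flathlb} the Chern curvature of $h_\rho$ vanishes and $F_\rho \in \mathcal{P}(A)$ with flat metric $h_\rho$. The Chern connection of $h_\rho$ pulls back under $p_1$ to the trivial connection $d$ on $\mathbb{C}^d \times \mathbb{C}$, so flat local frames lift to constant frames on $\mathbb{C}^d$. Parallel transport along a loop representing $\lambda \in \pi_1(A,*) = \Lambda$ then lifts to a path from a base point $\widetilde{*}$ to $\widetilde{*}+\lambda$ along which the fibre coordinate stays constant, and the defining identification $(\widetilde{*}+\lambda,\ \xi) \sim_\rho (\widetilde{*},\ \rho(\lambda)^{-1}\xi)$ shows that the monodromy around this loop is multiplication by $\rho(\lambda)$ (up to the orientation convention fixed in the definition of $m_A$). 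This is exactly the computation underlying the definition of $m_Y$ in general (see \cite[\S 2.1, Proposition 2.2]{Koike}), which is transparent here because the universal cover is $\mathbb{C}^d$ with a translation action; we conclude $m_A(F_\rho) = \rho$.

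The only genuinely delicate point is matching conventions in the monodromy computation so that it produces $\rho$ rather than $\rho^{-1}$: this requires unwinding the precise orientation and base-point conventions entering the definition of $m_A$ in \cite[\S 2.1]{Koike} and checking they agree with the sign in $\sim_\rho$. This is pure bookkeeping with no essential difficulty once the associated-bundle picture is in place, and every other assertion of the lemma is then immediate from $m_A$ being a bijection.
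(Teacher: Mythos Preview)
Your proposal is correct and follows exactly the approach the paper takes: the paper's proof is a one-liner (``Easy by checking the monodromy of $F_{\rho}$; see \cite[\S 2.1]{Koike} for more details''), and you have simply spelled out that monodromy computation in detail, together with the formal reduction of the forward direction to the converse via the bijectivity of $m_A$.
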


\begin{proof}
Easy by checking the monodromy of $F_{\rho}$; see \cite[\S 2.1]{Koike} for more details.
\end{proof}

The previous computation yields the following.
\begin{lemma} \label{lmscsgmr}
Suppose that we write $F_\rho := (\mathbb{C}^d\times \mathbb{C})/\sim_\rho$ as above, and fix the generators $\lambda_1, \lambda_2, \dots, \lambda_{2d}$ for the lattice $\Lambda$. Suppose also that we choose a fundamental domain $D$ of $\mathbb{C}^d / \Lambda_{\bm{c}}$ so that the map $\bm{c} \colon \mathcal{R}_{\pi_1} (A) \to \mathbb{C}^d / \Lambda_{\bm{c}}$ in Lemma \ref{lmctrisoqt} can be written componentwise as
\begin{equation*}
			\bm{c} ( \rho ) = (c_1 (\rho) , \dots , c_d (\rho) ) \in D \subset \mathbb{C}^d.
\end{equation*}
Then the map $\sigma_{\rho} \colon A \to F_{\rho}$ defined by
	\begin{equation*}
		\sigma_{\rho} ([z]_1) := \left[\left(z,\ \exp \left(-\sum_{j=1}^d \overline{c_j(\rho)} \cdot z_j + \sum_{j=1}^d c_j(\rho)\cdot \overline{z_j} \right)\right)\right]_2 ,
	\end{equation*}
	where $[ z ]_1$ denotes the equivalence class of $z = (z_1 , \dots , z_d) \in \mathbb{C}^d$ for the quotient $\mathbb{C}^d / \Lambda =A$ and $[ \cdot ]_2$ for the quotient $(\mathbb{C}^d\times \mathbb{C})/\sim_\rho$, is a smooth non-vanishing section of $F_{\rho}$ of unit length with respect to $h_{\rho}$.
\end{lemma}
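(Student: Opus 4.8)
The plan is to verify the three asserted properties of $\sigma_\rho$---that it is a well-defined section of $F_\rho$, that it is smooth and nowhere vanishing, and that it has unit length with respect to $h_\rho$---with essentially all of the content concentrated in well-definedness. Since the formula for $\sigma_\rho$ is written in terms of a representative $z \in \cx^d$ of the class $[z]_1 \in A = \cx^d/\Lambda$, the central task is to check that the right-hand side is unchanged, as an element of $F_\rho$, when $z$ is replaced by $z + \lambda$ for an arbitrary $\lambda \in \Lambda$.

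Concretely, writing $(\lambda)_1 , \dots , (\lambda)_d$ for the complex components of a lattice vector $\lambda \in \Lambda \subset \cx^d$, I would substitute $z+\lambda$ into the defining formula and factor the exponential into its $z$-part and its $\lambda$-part. Using the equivalence $(z+\lambda, \xi') \sim_\rho (z, \rho(\lambda)^{-1}\xi')$, the whole question of well-definedness collapses to the single identity
\[
\rho(\lambda) = \exp\left( \sum_{j=1}^d \left( c_j(\rho)\, \overline{(\lambda)_j} - \overline{c_j(\rho)}\, (\lambda)_j \right) \right)
\]
for every $\lambda \in \Lambda$. Because $\rho$ is a homomorphism into ${\rm U}(1)$ and the exponent on the right is $\rl$-linear in $\lambda$ regarded as a real vector, it is enough to verify this on the generators $\lambda_1, \dots, \lambda_{2d}$.

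This is exactly the point at which Lemma \ref{lmkcalcs} does the heavy lifting. Reading off the $\nu$-th row of the matrix identity there, applied to the purely imaginary vector $\bm{s}$ whose $\nu$-th entry is $\log\rho(\lambda_\nu) \pmod{2\pi\ai\itg}$ as in the definition of $\bm{c}(\rho)$ in Lemma \ref{lmctrisoqt}, produces precisely
\[
\sum_{j=1}^d \left( c_j(\rho)\, \overline{(\lambda_\nu)_j} - \overline{c_j(\rho)}\, (\lambda_\nu)_j \right) = \log\rho(\lambda_\nu) \pmod{2\pi\ai\itg},
\]
and exponentiating yields the desired identity on generators. I expect the one genuinely delicate point to be the bookkeeping of the branch of $\log\rho$: the congruence modulo $2\pi\ai\itg$ must be handled consistently, which is guaranteed here because the choice of fundamental domain $D$ of $\cx^d/\Lambda_{\bm{c}}$ fixes that branch (cf.~Remark \ref{rmfdeqbrlg}), so the $\pmod{\Lambda_{\bm{c}}}$ ambiguity in $\bm{c}(\rho)$ disappears after exponentiation.

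The remaining two properties are immediate. Smoothness and non-vanishing hold because $\sigma_\rho$ is built from the exponential of a smooth function on $\cx^d$, which never vanishes. For the unit-length claim I would simply note that the exponent $E := -\sum_j \overline{c_j(\rho)}\, z_j + \sum_j c_j(\rho)\, \overline{z_j}$ satisfies $\overline{E} = -E$ and is therefore purely imaginary, so that $|\exp(E)| = \exp(\rea(E)) = 1$; since the flat metric is defined by $|[(z,\xi)]|_{h_\rho} = |\xi|$, this gives $|\sigma_\rho|_{h_\rho} \equiv 1$ at once.
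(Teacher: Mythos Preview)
Your proof is correct and follows essentially the same approach as the paper: both reduce well-definedness to the key identity on the generators $\lambda_1,\dots,\lambda_{2d}$ via Lemma \ref{lmkcalcs}, and both observe that the exponent is purely imaginary to conclude unit length (hence non-vanishing). Your treatment is slightly more explicit about the branch of $\log\rho$ and the role of the fundamental domain, but the argument is the same.
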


The section $\sigma_{\rho}$ of course depends on the particular generators $\lambda_1, \lambda_2, \dots, \lambda_{2d}$ for $\Lambda$, but we do not make it explicit in the notation since the generators for the lattice is always fixed in the applications of this result presented later.

\begin{proof}
	Lemma \ref{lmkcalcs} implies that, for each $k=1 , \dots , 2d$ and each $z \in \mathbb{C}^d$, we have
	\begin{align*}
		&\sigma_{\rho} ([z + \lambda_k ]_1) \\
		&= \left[\left(z + \lambda_k ,\ \exp \left(-\left( \sum_{j=1}^d \overline{c_j(\rho)} \cdot z_j +  {^t \lambda_k} \cdot \overline{\bm{c} (\rho)} \right) + \left( \sum_{j=1}^d c_j(\rho)\cdot \overline{z_j} +  \overline{^t \lambda_k} \cdot \bm{c} (\rho) \right) \right)\right)\right]_2  \\
		&= \left[\left(z + \lambda_k ,\ \rho(\lambda_k ) \exp \left(-\sum_{j=1}^d \overline{c_j(\rho)} \cdot z_j   + \sum_{j=1}^d c_j(\rho)\cdot \overline{z_j}  \right)\right)\right]_2 \\
		&= \left[\left(z ,\  \exp \left(-\sum_{j=1}^d \overline{c_j(\rho)} \cdot z_j  + \sum_{j=1}^d c_j(\rho)\cdot \overline{z_j}  \right)\right)\right]_2 \\
		&=\sigma_{\rho} ([z ]_1)
	\end{align*}
	which proves that $\sigma_{\rho}$ is indeed a well-defined global smooth section of $F_{\rho}$. It has unit length with respect to $h_{\rho}$ since the argument of the exponential map is clearly purely imaginary, and hence non-vanishing.
\end{proof}

The final key computation is the following.

\begin{lemma}\label{lem:lambda_c}
	Recall the isomorphism $\bm{c} \colon \mathcal{R}_{\pi_1} (A) \isom \mathbb{C}^d / \Lambda_{\bm{c}}$ in Lemma \ref{lmctrisoqt}, and suppose that we take a fundamental domain $D$ of $\mathbb{C}^d / \Lambda_{\bm{c}}$ whose interior contains the origin of $\mathbb{C}^d$. Suppose also that we write $F \in \mathcal{P}(A)$ as $F_{\rho}$ for a unique $\rho \in \mathcal{R}_{\pi_1} (A)$ by Lemma \ref{lmtrmndex}. Then, for any smooth $(p,q)$-form $\eta$ on $A$ and the section $\sigma_{\rho}$ of $F_{\rho}$ constructed in Lemma \ref{lmscsgmr}, we have
		\begin{equation*}
			 \delbar_{F_{\rho}} (\sigma_{\rho} \eta) = \sigma_{\rho} \cdot \left( \delbar \eta + \sum_{j=1}^d c_j(\rho)\cdot d\overline{z_j} \wedge \eta \right), 
		\end{equation*}
		where $ \delbar_{F_{\rho}}$ stands for the $\delbar$-operator on $A$ acting naturally on sections of $F_{\rho}$ and we wrote componentwise as
		\begin{equation*}
			\bm{c} ( \rho ) = (c_1 (\rho) , \dots , c_d (\rho) ) \in D \subset \mathbb{C}^d.
		\end{equation*}
\end{lemma}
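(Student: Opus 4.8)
The plan is to compute $\delbar_{F_\rho}(\sigma_\rho \eta)$ directly using the explicit formula for $\sigma_\rho$ from Lemma \ref{lmscsgmr}. Since $\delbar_{F_\rho}$ acts via the Leibniz rule and $\sigma_\rho$ is a smooth (non-holomorphic) frame, the key point is that $\delbar_{F_\rho}\sigma_\rho$ need not vanish, and the stated formula simply records what this $\delbar$ of the frame is. Concretely, I would write $\sigma_\rho = [(z, e^{\phi(z)})]_2$ where
\[
\phi(z) = -\sum_{j=1}^d \overline{c_j(\rho)}\, z_j + \sum_{j=1}^d c_j(\rho)\,\overline{z_j},
\]
and reduce everything to a local computation on $\mathbb{C}^d$ via the frame given by the constant section $[(z,1)]_2$, which is holomorphic.

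**First** I would observe that, in the trivialisation of $F_\rho$ over a simply connected chart provided by $e_0 := [(z,1)]_2$, one has $\sigma_\rho = e^{\phi} e_0$, and $e_0$ is a holomorphic frame so $\delbar_{F_\rho} e_0 = 0$. Therefore for a $(p,q)$-form $\eta$,
\[
\delbar_{F_\rho}(\sigma_\rho\,\eta) = \delbar_{F_\rho}\bigl(e^{\phi}\eta\,e_0\bigr) = \bigl(\delbar(e^{\phi}\eta)\bigr) e_0 = e^{\phi}\bigl(\delbar\eta + (\delbar\phi)\wedge\eta\bigr) e_0 = \sigma_\rho\bigl(\delbar\eta + (\delbar\phi)\wedge\eta\bigr).
\]
The whole content then collapses to computing $\delbar\phi$. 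Since $\delbar$ extracts the antiholomorphic differential, and the first sum $-\sum_j \overline{c_j(\rho)} z_j$ is holomorphic (hence killed by $\delbar$), I get
\[
\delbar\phi = \sum_{j=1}^d c_j(\rho)\, d\overline{z_j},
\]
which is exactly the coefficient appearing in the statement. Substituting this back yields the claimed identity.

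**The main subtlety**, rather than an obstacle, is checking that this computation is globally well-defined on $A = \mathbb{C}^d/\Lambda$ rather than merely local: the frame $e_0$ does not descend to $A$, but $\sigma_\rho$ does (this is precisely the content of Lemma \ref{lmscsgmr}), and the operator $\delbar_{F_\rho}$ is intrinsically defined on $A$. The cleanest way to handle this is to note that both sides of the asserted equation are $F_\rho$-valued global forms on $A$, and that the identity has been verified in each local chart of the form above; since $\delbar_{F_\rho}$ is local and the charts cover $A$, the global identity follows. I would also remark that the hypothesis on the fundamental domain $D$ (that its interior contains the origin) and the precise value of $\bm{c}(\rho)$ play no role in the differential computation itself — they only fix which representative $\rho$ and which $\bm{c}(\rho) \in \mathbb{C}^d$ is used — so the proof is genuinely just the Leibniz rule plus the elementary observation that $\delbar$ annihilates the holomorphic part of $\phi$.
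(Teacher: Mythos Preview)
Your proof is correct and follows essentially the same approach as the paper: apply the Leibniz rule $\delbar_{F_\rho}(\sigma_\rho\eta) = (\delbar_{F_\rho}\sigma_\rho)\wedge\eta + \sigma_\rho\,\delbar\eta$ and compute $\delbar_{F_\rho}\sigma_\rho = \sigma_\rho\sum_j c_j(\rho)\,d\overline{z_j}$ from the explicit formula in Lemma~\ref{lmscsgmr}. Your version is slightly more detailed in that you make the holomorphic local frame $e_0=[(z,1)]_2$ explicit and address the local-to-global passage, but the argument is the same.
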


\begin{proof}
We observe
\begin{equation*}
	\delbar_{F_{\rho}} (\sigma_{\rho} \eta) = ( \delbar_{F_{\rho}} \sigma_{\rho}) \wedge \eta + \sigma_{\rho} \delbar \eta
\end{equation*}
and compute, by recalling Lemma \ref{lmscsgmr},
\begin{equation*}
	\delbar_{F_{\rho}} \sigma_{\rho} = \sigma_{\rho} \left( \sum_{j=1}^d c_j(\rho)\cdot d\overline{z_j}\right),
\end{equation*}
which makes sense as long as we fix the fundamental domain $D$ of $\mathbb{C}^d / \Lambda_{\bm{c}}$. 
\end{proof}

\subsection{$\overline{\partial}$-operators on flat line bundles and the perturbed $\overline{\partial}$-operators} \label{section:dboflbpdb}

We now work on a compact K\"ahler manifold $X$ and a flat holomorphic line bundle $F$ over $X$. In Lemma \ref{lmhlstfdbf}, we observed that for a flat holomorphic line bundle $F$ that is holomorphically non-trivial, we have a $C^{\infty}$-bundle isomorphism $f \colon F \to \mathbb{I}_X$ such that $\delbar_{\mathbb{I}_X} \neq (f^{-1})^* \circ \delbar_F \circ f^*$. An important point here is that we can trade the $\delbar$-operator on $F$ for an operator $(f^{-1})^* \circ \delbar_F \circ f^*$ on $\mathbb{I}_X$ which can be written as a perturbation of the usual $\delbar$-operator on $\mathbb{I}_X$; working with operators on varying domains is difficult, but varying operators on a fixed domain is easier.

The ultimate aim of this section is to get a concrete description of $(f^{-1})^* \circ \delbar_F \circ f^*$ as a perturbation of the $\delbar$-operator (Proposition \ref{ppsmisolc} and Definition \ref{dfprdboprho}). The key step is in writing down the $C^{\infty}$-isomorphism $f$ explicitly by means of a smooth global section $\sigma\colon X\to F$ of unit length with respect to the flat metric, so that $f$ can be written more explicitly as $f \colon F \ni u\mapsto u/\sigma\in \mathbb{I}_X$. Such an explicit description of the perturbed $\delbar$-operators allows us to get concrete estimates for $(f^{-1})^* \circ \delbar_F \circ f^*$, which is what we discuss in \S \ref{scgaopdop}.

The Albanese variety and the materials we presented above play a crucial role in finding such an explicit description. We start by applying the results in the previous section for $A = \cx^d / \Lambda$ to a specific torus $\mathrm{Alb} (X)$.

\begin{proposition} \label{lmpcgpiso}
	Suppose that we fix a basis for $H^0 (X , \Omega_X^1)$ and generators of $H_1 (X, \mathbb{Z})_{\rm free}$ so that we identify $\mathrm{Alb} (X) = \mathbb{C}^d / \Lambda$. Then we have a commutative diagram of natural isomorphisms of abelian groups as follows. 
\begin{displaymath}
		\xymatrixcolsep{3pc}\xymatrixrowsep{4pc}\xymatrix{\mathbb{C}^d / \Lambda_{\bm{c}} \ar@{<-}[r]^-{\bm{c}}_-{\cong} & \mathcal{R}_{\pi_1} (\mathbb{C}^d / \Lambda ) \ar@{=}[r] & \mathcal{R}_{\pi_1} (\mathrm{Alb} (X)) \ar@{->}[r]^-{({\rm alb}_*)^{\vee}}_-{\cong} &  {\rm Hom}_{\rm group}(H_1 (X, \mathbb{Z})_{\rm free}, {\rm U(1)})  \ar@{<-}[d]^-{m_X}_-{\cong} \\ & & 
		 {\rm Pic}^0(\mathrm{Alb} (X)) \ar@{>}[r]^-{\cong}_-{{\rm alb}^*} \ar@{->}[u]^-{m_{\mathrm{Alb} (X)}}_-{\cong} &  {\rm Pic}^0 (X)} 
\end{displaymath}
In particular, ${\rm Pic}^0(X)$ is isomorphic as an abelian group to $\mathbb{C}^d / \Lambda_{\bm{c}}$, and by abuse of notation we write
\begin{equation*}
	\bm{c} \colon {\rm Pic}^0(X) \isom \mathbb{C}^d / \Lambda_{\bm{c}}
\end{equation*}
for the isomorphism.
\end{proposition}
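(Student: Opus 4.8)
The plan is to assemble the diagram entirely from isomorphisms that have already been constructed, so that the proposition amounts to bookkeeping together with one commutativity statement that was in fact already verified inside the proof of Proposition~\ref{prop:alb}. Having fixed the basis $\{\zeta_j\}$ and the generators $\{[\gamma_j]\}$, Lemma~\ref{lmalbfxbsmp} identifies $\mathrm{Alb}(X) = \mathbb{C}^d/\Lambda$, so the equality $\mathcal{R}_{\pi_1}(\mathbb{C}^d/\Lambda) = \mathcal{R}_{\pi_1}(\mathrm{Alb}(X))$ in the top row is a literal identity of abelian groups, and $\Lambda_{\bm c}$ is the lattice attached to this $\Lambda$ in Lemma~\ref{lmltcdltc}. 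The leftmost arrow $\bm{c}$ is then exactly the isomorphism of abelian groups supplied by Lemma~\ref{lmctrisoqt}, so nothing new is needed on that edge.

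First I would check that each remaining arrow is an isomorphism of abelian groups. For $({\rm alb}_*)^{\vee}$, I would use the Hurewicz identification $\mathcal{R}_{\pi_1}(\mathrm{Alb}(X)) = {\rm Hom}_{\rm group}(H_1(\mathrm{Alb}(X), \mathbb{Z}), {\rm U(1)})$ (valid since $\mathrm{Alb}(X)$ is a torus, so $\pi_1 = H_1$ is abelian) together with the fact, established inside the proof of Proposition~\ref{prop:alb}, that ${\rm alb}_* \colon H_1(X, \mathbb{Z})_{\rm free} \to H_1(\mathrm{Alb}(X), \mathbb{Z})$ is an isomorphism; precomposition with ${\rm alb}_*$ is then an isomorphism onto ${\rm Hom}_{\rm group}(H_1(X, \mathbb{Z})_{\rm free}, {\rm U(1)})$. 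The monodromy map $m_{\mathrm{Alb}(X)}$ is an isomorphism by (\ref{eqmdmmy}) applied to $Y = \mathrm{Alb}(X)$, where I note ${\rm Pic}^0(\mathrm{Alb}(X)) = \mathcal{P}(\mathrm{Alb}(X))$ by Lemma~\ref{lem:kahler_pic0_p_relation} (a torus has torsion-free $H^2$). Finally, the vertical $m_X$ restricted to ${\rm Pic}^0(X)$ is an isomorphism onto ${\rm Hom}_{\rm group}(H_1(X, \mathbb{Z})_{\rm free}, {\rm U(1)})$; this is precisely the content of (\ref{eq:isom_m_X_Pic0}), proved within Proposition~\ref{prop:alb}.

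It then remains to record commutativity of the right-hand square, $m_X \circ {\rm alb}^* = ({\rm alb}_*)^{\vee} \circ m_{\mathrm{Alb}(X)}$. But this is exactly the commutative diagram displayed in the proof of Proposition~\ref{prop:alb}, under the identifications above and the identification of the dashed arrow therein with ${\rm alb}^*$; so no additional work is required. Composing the isomorphisms around the perimeter, i.e.\ $\bm{c} \circ m_{\mathrm{Alb}(X)} \circ ({\rm alb}^*)^{-1}$ (equivalently $\bm{c} \circ (({\rm alb}_*)^{\vee})^{-1} \circ m_X$), yields the asserted isomorphism $\bm{c} \colon {\rm Pic}^0(X) \isom \mathbb{C}^d/\Lambda_{\bm c}$; the reuse of the symbol $\bm{c}$ is harmless since by construction this composite is the map of Lemma~\ref{lmctrisoqt} precomposed with the identification ${\rm Pic}^0(X) \cong \mathcal{R}_{\pi_1}(\mathrm{Alb}(X))$ furnished by the rest of the diagram.

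The point I would emphasise is that there is essentially no genuine obstacle here: everything nontrivial — the bijectivity of ${\rm alb}_*$, the isomorphism (\ref{eq:isom_m_X_Pic0}), and the commutativity of the square — has already been proved in Proposition~\ref{prop:alb}, and the group-homomorphism property of each arrow is immediate. The only care needed is to confirm that the two Hurewicz identifications and the identification $\mathrm{Alb}(X) = \mathbb{C}^d/\Lambda$ are compatible with the arrows as drawn, which is routine. Thus the proof reduces to citing Lemma~\ref{lmctrisoqt} for the left edge and Proposition~\ref{prop:alb} (with its diagram) for the rest, glued along $\mathcal{R}_{\pi_1}(\mathrm{Alb}(X))$.
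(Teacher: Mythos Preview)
Your proposal is correct and follows essentially the same approach as the paper's own proof, which simply cites Lemma~\ref{lmctrisoqt}, the proof of Proposition~\ref{prop:alb}, and the monodromy isomorphism (\ref{eqmdmmy}), together with Lemma~\ref{lem:kahler_pic0_p_relation} to identify $\mathcal{P}(\mathrm{Alb}(X)) = \mathrm{Pic}^0(\mathrm{Alb}(X))$. Your write-up is just a more explicit unpacking of exactly these ingredients.
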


It is of course obvious that ${\rm Pic}^0(X)$ is isomorphic to a torus, but the specific lattice $\Lambda_{\bm{c}}$ and the isomorphism $\bm{c}$ defined in Lemma \ref{lmctrisoqt} play a very important role later. 

\begin{proof}[Proof of Proposition \ref{lmpcgpiso}]
	Obvious from \cite[Proposition 2.2]{Koike}, Lemma \ref{lmctrisoqt}, and the proof of Proposition \ref{prop:alb}. Note that it follows from Lemma \ref{lem:kahler_pic0_p_relation} that $\mathcal{P}({\rm Alb}(X))={\rm Pic}^0({\rm Alb}(X))$, since the singular homology groups of a torus has no torsion. 
\end{proof}

\begin{remark}\label{rmk:amb_for_d} 
There is an ambiguity for the choice of the Euclidean distance $\mathsf{d}_0$ on ${\rm Pic}^0(X)$ from which we constructed the distance $\mathsf{d}$ in Theorem \ref{thm:main_1}, \ref{thm:main_2}, and Corollary \ref{cor:main}, since it is defined by choosing the Euclidean coordinates of the universal covering of ${\rm Pic}^0(X)$. 
This ambiguity causes no problem for the statements as any two Euclidean distances on ${\rm Pic}^0(X)$ are equivalent to each other, since any two norms on a finite dimensional vector space are equivalent. 
In the proof, 
we identify ${\rm Pic}^0(X)$ with $\mathbb{C}^d/\Lambda_{\bm c}$ via the isomorphism as in Proposition \ref{lmpcgpiso} and use the Euclidean distance $\mathsf{d}_0$ which comes from the standard coordinates of the universal covering $\mathbb{C}^d$ of $\mathbb{C}^d/\Lambda_{\bm c}$. 
\end{remark}

The above proposition immediately implies the following.

\begin{lemma} \label{lmsmisolc}
	Suppose $F \in {\rm Pic}^0(X)$. Then there exists unique $\rho \in \mathcal{R}_{\pi_1} (\mathrm{Alb} (X))$ such that $F = {\rm alb}^* F_{\rho} $, and moreover $h:= {\rm alb}^* h_{\rho}$ defines a flat metric on $F$ (which is unique up to scaling by Lemma \ref{lmuqflmfbf}). 
	
	Furthermore, the global non-vanishing section $\sigma_{F, \rho} := {\rm alb}^*\sigma_{\rho}$ of $F$ defines a $C^{\infty}$-bundle isomorphism $f_{\rho} \colon F \isom \mathbb{I}_X$ by
	\begin{equation*}
		f_{\rho}\colon F \ni u \mapsto \frac{u}{\sigma_{F, \rho}} \in \mathbb{I}_X,
	\end{equation*}
	which is an isometry with respect to the flat metric $h$ on $F$ and the standard metric on $\mathbb{I}_X$.
\end{lemma}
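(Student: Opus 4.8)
The plan is to read off all three assertions directly from the isomorphisms assembled in Proposition \ref{lmpcgpiso} together with the explicit constructions of Lemmas \ref{lmtrmndex} and \ref{lmscsgmr}, checking only that the relevant objects behave well under the holomorphic pullback ${\rm alb}^*$.

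First I would establish the existence and uniqueness of $\rho$. Proposition \ref{lmpcgpiso} furnishes an isomorphism ${\rm alb}^* \colon {\rm Pic}^0(\mathrm{Alb}(X)) \isom {\rm Pic}^0(X)$ of abelian groups, so for a given $F \in {\rm Pic}^0(X)$ there is a unique $G \in {\rm Pic}^0(\mathrm{Alb}(X)) = \mathcal{P}(\mathrm{Alb}(X))$ with ${\rm alb}^* G = F$; here I invoke $\mathcal{P}(\mathrm{Alb}(X)) = {\rm Pic}^0(\mathrm{Alb}(X))$ from Lemma \ref{lem:kahler_pic0_p_relation}. By Lemma \ref{lmtrmndex}, $G$ is isomorphic to $F_\rho$ for a unique $\rho \in \mathcal{R}_{\pi_1}(\mathrm{Alb}(X))$, whence $F = {\rm alb}^* F_\rho$ with $\rho$ uniquely determined.

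Next I would produce the flat metric. The metric $h_\rho$ on $F_\rho$ is flat by Lemma \ref{lmtrmndex}, and since the Albanese map is holomorphic, the pullback $h := {\rm alb}^* h_\rho$ is a Hermitian metric on ${\rm alb}^* F_\rho = F$ whose Chern curvature is ${\rm alb}^*$ of the (vanishing) curvature of $h_\rho$, hence is identically zero. Thus $h$ is flat, and its uniqueness up to a positive scalar is exactly Lemma \ref{lmuqflmfbf}.

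Finally, for the section and the isometry: by Lemma \ref{lmscsgmr}, $\sigma_\rho$ is a global nowhere-vanishing section of $F_\rho$ of unit $h_\rho$-length, so its pullback $\sigma_{F, \rho} := {\rm alb}^* \sigma_\rho$ is a global section of $F$ satisfying $({\rm alb}^* \sigma_\rho)(x) = \sigma_\rho({\rm alb}(x)) \neq 0$ for all $x \in X$, hence nowhere vanishing, with $|\sigma_{F, \rho}|_h = {\rm alb}^*(|\sigma_\rho|_{h_\rho}) \equiv 1$ by functoriality of the pointwise norm under pullback. A nowhere-vanishing global section trivialises $F$, so $f_\rho \colon u \mapsto u/\sigma_{F, \rho}$ is a well-defined $C^\infty$-bundle isomorphism onto $\mathbb{I}_X$; writing a section of $F$ locally as $\lambda \sigma_{F, \rho}$ gives $|\lambda \sigma_{F, \rho}|_h = |\lambda|$, which equals the standard norm of $f_\rho(\lambda \sigma_{F, \rho}) = \lambda$, so $f_\rho$ is an isometry. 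The only points needing care are the naturality of the Chern curvature and of the pointwise norm under the holomorphic map ${\rm alb}$, which is precisely what lets the flatness of $h_\rho$ and the unit length of $\sigma_\rho$ descend to $F$; given the preceding results none of this constitutes a genuine obstacle, and the lemma is indeed an immediate consequence of Proposition \ref{lmpcgpiso}.
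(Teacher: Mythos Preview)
Your argument is correct and follows exactly the paper's approach: the paper's proof says the lemma is ``just a summary of the arguments presented above,'' pointing to Proposition~\ref{prop:alb} (equivalently Proposition~\ref{lmpcgpiso}) and \cite[Proposition 2.2]{Koike}, and noting that $\sigma_{F,\rho}$ has unit length so that it is sent to $1 \in \mathbb{C}$ under $f_\rho$. You have simply unpacked this summary in more detail, invoking Lemmas~\ref{lmtrmndex} and~\ref{lmscsgmr} explicitly for the representation $\rho$, the flat metric $h_\rho$, and the unit-length section $\sigma_\rho$, and then checking naturality under ${\rm alb}^*$.
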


\begin{proof}
	The statement is just a summary of the arguments presented above; see in particular the proof of Proposition \ref{prop:alb} and \cite[Proposition 2.2]{Koike}, and note that ${\rm alb}^*\sigma_{\rho}$ clearly defines a flat metric on ${\rm alb}^* F_{\rho}$. For the claim that $f$ is an isometry, observe that the global section $\sigma_{X, \rho}$ of $F$ of unit length is mapped to $1 \in \mathbb{C}=H^0(X, \mathbb{I}_X)$.
\end{proof}

When $F$ is holomorphically non-trivial, we saw in Lemma \ref{lmhlstfdbf} that the $C^{\infty}$-isomorphism $f_{\rho} \colon F \isom \mathbb{I}_X$ defines an integrable partial connection $(f_{\rho}^{-1})^* \circ \delbar_F \circ f_{\rho}^*$ on $\mathbb{I}_X$. We can write it down more explicitly as follows.

\begin{proposition} \label{ppsmisolc}
	Suppose $F \in {\rm Pic}^0(X)$, which we may write as $F = {\rm alb}^* F_{\rho} $ for a unique $\rho \in \mathcal{R}_{\pi_1} (\mathrm{Alb} (X))$. Let $\zeta_1, \zeta_2, \dots, \zeta_d$ be a fixed basis for $H^0(X, \Omega_X^1)$, and we also fix generators of $H_1 (X, \mathbb{Z})_{\rm free}$ so that we identify $\mathrm{Alb} (X) = \mathbb{C}^d / \Lambda$, giving rise to a commutative diagram of isomorphisms 
\begin{displaymath}
		\xymatrixcolsep{8pc}\xymatrixrowsep{4pc}\xymatrix{ \mathcal{R}_{\pi_1} (\mathrm{Alb} (X)) \ar@{->}[r]^-{\rho \; \mapsto \; {\rm alb}^* F_{\rho} = F}_-{\cong} \ar@{->}[d]_-{\cong} &  {\rm Pic}^0 (X) \ar@{-->}[ld]^-{\bm{c}}_-{\cong} \\ \mathbb{C}^d / \Lambda_{\bm{c}}  & } 
\end{displaymath}
as in Proposition \ref{lmpcgpiso}. Suppose also that we choose a fundamental domain $D$ of $\mathbb{C}^d / \Lambda_{\bm{c}}$ whose interior contains the origin of $\mathbb{C}^d$ and write componentwise as 
		\begin{equation*}
			\bm{c} ( \rho ) = (c_1 (\rho) , \dots , c_d (\rho) ) \in D \subset \mathbb{C}^d.
		\end{equation*}
Then, for any $\eta \in A^{p,q} (X)$, the partial connection $(f_{\rho}^{-1})^* \circ \delbar_F \circ f_{\rho}^*$ on $\mathbb{I}_X$ acts on $\eta$ as
	\begin{equation} \label{eqprdfpdbop}
		(f_{\rho}^{-1})^* \circ \delbar_F \circ f_{\rho}^* \eta :=\delbar \eta + \sum_{j=1}^d c_j(\rho)\cdot \overline{\zeta_j} \wedge \eta ,
	\end{equation}
	where $c_1 (\rho ) = \cdots = c_d (\rho ) = 0$ if and only if $F$ is holomorphically trivial.
\end{proposition}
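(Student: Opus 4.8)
The plan is to reduce the claim to the computation already carried out on the torus $A = \mathbb{C}^d/\Lambda = \mathrm{Alb}(X)$ in Lemma \ref{lem:lambda_c}, transporting it to $X$ along the (holomorphic) Albanese map. First I would unwind the two pullback operations concretely. Since $f_\rho \colon F \to \mathbb{I}_X$ is the $C^\infty$-isomorphism $u \mapsto u/\sigma_{F,\rho}$ of Lemma \ref{lmsmisolc}, its inverse $f_\rho^{-1}$ is multiplication by $\sigma_{F,\rho}$; hence for $\eta \in A^{p,q}(X)$ one has $f_\rho^* \eta = \sigma_{F,\rho}\,\eta$ as an $F$-valued form, while $(f_\rho^{-1})^*$ sends any $F$-valued form of the shape $\sigma_{F,\rho}\,\alpha$ back to the ordinary form $\alpha$.

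Next I would apply the Leibniz rule for the partial connection $\delbar_F$,
\[
\delbar_F(\sigma_{F,\rho}\,\eta) = (\delbar_F \sigma_{F,\rho}) \wedge \eta + \sigma_{F,\rho}\,\delbar \eta,
\]
valid for arbitrary $\eta$; this is precisely the step that lets me avoid assuming $\eta$ is itself a pullback from $A$ (not every form on $X$ is). The only nontrivial ingredient is then $\delbar_F \sigma_{F,\rho}$. Because $\mathrm{alb}$ is holomorphic, $\delbar$ commutes with $\mathrm{alb}^*$, so $\delbar_F \sigma_{F,\rho} = \delbar_{\mathrm{alb}^* F_\rho}(\mathrm{alb}^*\sigma_\rho) = \mathrm{alb}^*(\delbar_{F_\rho}\sigma_\rho)$. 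Substituting the formula $\delbar_{F_\rho}\sigma_\rho = \sigma_\rho\sum_{j} c_j(\rho)\, d\overline{z_j}$ from the proof of Lemma \ref{lem:lambda_c}, and using $\mathrm{alb}^*(d\overline{z_j}) = \overline{\zeta_j}$ from (\ref{eqalbfxbsmp}), gives $\delbar_F \sigma_{F,\rho} = \sigma_{F,\rho}\sum_{j} c_j(\rho)\,\overline{\zeta_j}$. Plugging this into the Leibniz expansion and then applying $(f_\rho^{-1})^*$ produces exactly the asserted formula (\ref{eqprdfpdbop}).

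For the final equivalence, I would show that $c_1(\rho) = \cdots = c_d(\rho) = 0$ holds precisely when $F$ is holomorphically trivial. Since the fundamental domain $D$ was chosen with the origin in its interior, the representative $\bm{c}(\rho) = (c_1(\rho),\dots,c_d(\rho)) \in D$ vanishes if and only if the class $\bm{c}(\rho) \in \mathbb{C}^d/\Lambda_{\bm{c}}$ is the identity element; by the group isomorphism $\bm{c}\colon {\rm Pic}^0(X) \isom \mathbb{C}^d/\Lambda_{\bm{c}}$ of Proposition \ref{lmpcgpiso}, this occurs exactly when $F = \mathbb{I}_X$, i.e.\ when $F$ is holomorphically trivial.

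I do not expect a serious obstacle here: the analytic substance is entirely contained in Lemma \ref{lem:lambda_c}, and the proposition amounts to transporting that identity along $\mathrm{alb}^*$ while correctly tracking the directions of $f_\rho^*$ and $(f_\rho^{-1})^*$ and invoking the commutation of $\delbar$ with holomorphic pullback. The one point demanding mild care is to compute $\delbar_F \sigma_{F,\rho}$ \emph{through the section} $\sigma_{F,\rho} = \mathrm{alb}^*\sigma_\rho$, which is a genuine pullback, rather than attempting to pull back the general form $\eta$.
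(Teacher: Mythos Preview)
Your proposal is correct and follows essentially the same approach as the paper's proof: both unwind $f_\rho^*$ and $(f_\rho^{-1})^*$ as multiplication and division by $\sigma_{F,\rho}$, apply the Leibniz rule for $\delbar_F$, compute $\delbar_F\sigma_{F,\rho}$ by pulling back the formula $\delbar_{F_\rho}\sigma_\rho = \sigma_\rho\sum_j c_j(\rho)\,d\overline{z_j}$ from Lemma~\ref{lem:lambda_c} along the holomorphic Albanese map using (\ref{eqalbfxbsmp}), and conclude the triviality statement from the choice of fundamental domain and the isomorphism of Proposition~\ref{lmpcgpiso}. Your remark that one must push $\mathrm{alb}^*$ through the section $\sigma_{F,\rho}$ rather than the arbitrary form $\eta$ is exactly the point the paper handles implicitly.
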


\begin{proof}
	For any $\eta \in A^{p,q}(X)$, $f^*_{\rho} \eta$ is an $F$-valued $(p,q)$-form $f^*_{\rho} \eta = \sigma_{F, \rho} \eta \in A^{p,q}(X,F)$. We then find
	\begin{align*}
		(f_{\rho}^{-1})^* \circ \delbar_F \circ f_{\rho}^* \eta &= (f_{\rho}^{-1})^* \circ \delbar_F ( \sigma_{F, \rho} \eta ) \\
		&=(f_{\rho}^{-1})^* ( f_{\rho}^* ( \delbar \eta ) ) + (f_{\rho}^{-1})^* (\delbar_F  \sigma_{F, \rho} ) \wedge \eta  \\
		&= \delbar \eta + {\rm alb}^* \left( \frac{1}{\sigma_{\rho}}\delbar_{F_{\rho}} \sigma_{\rho} \right) \wedge \eta,
	\end{align*}
	where we note $ {\rm alb}^* \delbar_{F_{\rho}} = \delbar_F$ since the Albanese map is holomorphic, and recall $\sigma_{F, \rho} = {\rm alb}^*\sigma_{\rho}$. Lemma \ref{lmalbfxbsmp} (see also (\ref{eqalbfxbsmp})) and the computation in the proof of Lemma \ref{lem:lambda_c} imply
	\begin{equation*}
		{\rm alb}^* \left( \frac{1}{\sigma_{\rho}}\delbar_{F_{\rho}} \sigma_{\rho} \right)  = {\rm alb}^* \left(  \sum_{j=1}^d c_j(\rho)\cdot d \overline{z_j} \right) = \sum_{j=1}^d c_j(\rho)\cdot \overline{\zeta_j},
	\end{equation*}
	which is the claimed result. We finally note that $F$ is holomorphically isomorphic to $\mathbb{I}_X$ if and only if $\rho$ is the trivial representation in $\mathcal{R}_{\pi_1} (\mathrm{Alb} (X))$, which occurs if and only if $\bm{c} (\rho) = 0$ in $\mathbb{C}^d$ mod $\Lambda_{\bm{c}}$. The choice of the fundamental domain $D$ implies that this happens if and only if $c_1 (\rho ) = \cdots = c_d (\rho ) = 0$.
\end{proof}


\begin{definition} \label{dfprdboprho}
	The partial connection (\ref{eqprdfpdbop}) in Proposition \ref{ppsmisolc}, defined with respect to the data therein, is called the \textbf{perturbed $\delbar$-operator} on $\mathbb{I}_X$ associated to $\rho \in \mathcal{R}_{\pi_1} (\mathrm{Alb} (X))$. In what follows we write 
	\begin{equation*}
		\delbar_{\rho} \colon A^{p,q} (X) \to A^{p,q+1}(X)
	\end{equation*}
	for $(f_{\rho}^{-1})^* \circ \delbar_F \circ f_{\rho}^*$, i.e.
	\begin{equation*}
		\delbar_{\rho} := \delbar  + \sum_{j=1}^d c_j(\rho)\cdot \overline{\zeta_j} \wedge .
	\end{equation*}
\end{definition}

\begin{remark}
	It is often convenient to write $\rho$ for the flat line bundle $F = {\rm alb}^* F_{\rho} \in {\rm Pic}^0 (X)$ associated to $\rho \in \mathcal{R}_{\pi_1} (\mathrm{Alb} (X))$. We may henceforth abuse the notation and say $\rho \in {\rm Pic}^0 (X)$ to mean the corresponding flat line bundle.
\end{remark}

Note that in the above $c_1 (\rho) , \dots , c_d (\rho )$ are just complex numbers for each $\rho \in {\rm Pic}^0 (X)$, with respect to the chosen fundamental domain, whose Euclidean norm is close to zero if and only if $\mathsf{d} ( \rho , \mathbb{I}_X)$ is small. This means that $\delbar_{\rho}$ is a benign perturbation of the $\delbar$-operator whose analytic properties are easy to establish, as we shall see in the next section, as long as $\rho$ is close to the trivial bundle in ${\rm Pic}^0(X)$ with respect to $\mathsf{d}_0$.


\subsection{Geometric analysis of the perturbed $\delbar$-operators} \label{scgaopdop}

Let $(X, \omega)$ be a compact K\"ahler manifold. 
Fixing a basis $\zeta_1, \zeta_2, \dots, \zeta_d$ for $H^0(X, \Omega_X^1)$ and generators $[\gamma_1], [\gamma_2], \dots, [\gamma_{2d}]$ of $H_1(X, \mathbb{Z})_{\rm free}$, here let us study the perturbed $\delbar$-operator $\delbar_\rho$ which we constructed in the previous section. 
We start with the following elementary lemma.

\begin{lemma}
	The adjoint of $\delbar_\rho \colon A^{p,q}(X) \to A^{p,q+1}(X)$ is given by $\delbar^*_{\rho} \colon A^{p,q}(X) \to A^{p,q-1}(X)$, which is defined as
\begin{equation*}
	\delbar^*_{\rho} := \delbar^* + \sum_{j=1}^d c_j (\rho) \ast \zeta_j \wedge \ast 
\end{equation*}
where $\ast$ is the Hodge star of the K\"ahler metric $\omega$, and $\delbar^* = - ( \ast \partial \ast)$ is the $\omega$-adjoint of $\delbar$. 
\end{lemma}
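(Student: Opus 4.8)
The plan is to compute the formal adjoint of $\delbar_\rho = \delbar + \sum_{j=1}^d c_j(\rho) \, \overline{\zeta_j} \wedge$ by adjointing each of its two summands separately. Since $\delbar_\rho$ is a first-order operator and the perturbation term is a zeroth-order (algebraic) operator, linearity of the adjoint reduces the problem to two tasks: recalling the standard formula $\delbar^* = -(\ast \partial \ast)$ for the adjoint of $\delbar$, and computing the adjoint of the wedge operator $\overline{\zeta_j} \wedge (\,\cdot\,)$ with respect to the $L^2$-inner product on $A^{p,q}(X)$.

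The key step is the second task. The operator $L_j := \overline{\zeta_j} \wedge$ maps $A^{p,q}(X) \to A^{p,q+1}(X)$, so its adjoint $L_j^*$ maps $A^{p,q+1}(X) \to A^{p,q}(X)$, and I must identify it with the contraction/interior-product-type operator $\ast \, \zeta_j \wedge \ast$ claimed in the statement. First I would recall the well-known pointwise identity for the adjoint of wedging against a one-form on a Hermitian manifold: for a $(1,0)$-form $\alpha$, the pointwise adjoint of $\overline{\alpha} \wedge$ with respect to $\langle\,\cdot\,,\,\cdot\,\rangle_\omega$ is $\ast\, \alpha \wedge \ast$ (up to a sign that depends on conventions, which I would pin down by tracking the degree-dependent factors in the definition $\ast\ast = (-1)^{\deg}$ on forms of the relevant type). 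Since the pointwise inner product integrates to the $L^2$-inner product via $dV_\omega$, the pointwise adjoint identity immediately upgrades to the $L^2$-adjoint, because $L_j$ is an algebraic (zeroth-order) operator and no integration by parts is involved for this term. The factor $c_j(\rho)$ is a constant scalar, and I would note that taking the adjoint of a complex-scalar multiple $c_j(\rho) L_j$ naively produces $\overline{c_j(\rho)} L_j^*$; here one must verify that the convention in force (the operator $\delbar_\rho$ is $\cx$-linear and its $L^2$-adjoint is defined so that the perturbation coefficient reappears as $c_j(\rho)$, not its conjugate) is consistent with the stated formula — this sign/conjugation bookkeeping is where I expect the only real subtlety to lie.

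Having the two adjoints, I would assemble $\delbar_\rho^* = \delbar^* + \sum_{j=1}^d c_j(\rho)\, \ast \zeta_j \wedge \ast$ by linearity and then verify the defining adjunction relation
\begin{equation*}
(\delbar_\rho \alpha, \beta)_{L^2} = (\alpha, \delbar_\rho^* \beta)_{L^2}
\end{equation*}
for all $\alpha \in A^{p,q}(X)$ and $\beta \in A^{p,q+1}(X)$. The $\delbar$ versus $\delbar^*$ pairing is the standard Kähler identity requiring one integration by parts (Stokes, using compactness of $X$ so there is no boundary term), while the perturbation pairing $(\overline{\zeta_j}\wedge\alpha, \beta)_{L^2} = (\alpha, \ast\zeta_j\wedge\ast\,\beta)_{L^2}$ is purely pointwise and follows from the wedge-adjoint identity above.

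The main obstacle is entirely conventional rather than conceptual: correctly tracking the signs coming from $\ast\ast$ on $(p,q)$-forms of the various degrees and confirming whether the perturbation coefficient in $\delbar_\rho^*$ should be $c_j(\rho)$ or $\overline{c_j(\rho)}$. I would resolve this by a direct pointwise computation of $\langle \overline{\zeta_j}\wedge\alpha, \beta\rangle_\omega$ in a local unitary coframe adapted to $\omega$, where wedging by $\overline{\zeta_j}$ and its adjoint contraction become explicit in coordinates, thereby confirming the formula as stated with no conjugation on $c_j(\rho)$.
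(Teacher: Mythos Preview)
Your proposal is correct and follows essentially the same approach as the paper: the paper's proof consists of a single pointwise Hodge-star manipulation showing $\langle \overline{\zeta_j} \wedge \alpha, \beta \rangle_\omega = \langle \alpha, \ast(\zeta_j \wedge \ast \beta) \rangle_\omega$ (done coordinate-free via the identity $\langle u,v\rangle_\omega\, dV_\omega = u \wedge \overline{\ast v}$ rather than in a local coframe), and then declares the claim immediate. Your caution about whether the coefficient should appear as $c_j(\rho)$ or $\overline{c_j(\rho)}$ is well-placed and in fact more careful than the paper, which glosses over this point; it does not affect any later use of the lemma since only $|c_j(\rho)|$ ever appears.
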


\begin{proof}
	For any $\alpha \in A^{p,q-1} (X)$ and $\beta \in A^{p,q} (X)$ we have
\begin{align*}
	\langle \overline{\zeta_j} \wedge \alpha , \beta \rangle_{\omega} &= \overline{\zeta_j} \wedge \alpha \wedge \overline{\ast \beta} \\
	&=(-1)^{p+q-1} \alpha \wedge \ast \ast^{-1} (\overline{\zeta_j \wedge \ast \beta}) \\
	&= (-1)^{p+q-1+n-p+n-q+1} \alpha \wedge \ast \overline{\ast (\zeta_j \wedge \ast \beta)} \\
	&= \langle \alpha , \ast (\zeta_j \wedge \ast \beta) \rangle_{\omega}, 
\end{align*}
from which the claim follows immediately.
\end{proof}

Throughout in what follows, we shall write $(-1)^{\bullet}$ for the sign factor that does not affect the argument. 

\begin{lemma} \label{lmlccp}
	Let $\eta_1, \eta_2$ be smooth $(0,1)$-forms and $\alpha_1 , \alpha_2$ be smooth $(p,q)$-forms. Then 
	\begin{equation*}
		|\langle \eta_1 \wedge \alpha_1 , \eta_2 \wedge \alpha_2 \rangle_{\omega} (x)| \le n \binom{n}{q} \cdot \Vert \eta_1 \Vert_{\omega} (x) \Vert \eta_2 \Vert_{\omega}(x) \Vert \alpha_1 \Vert_{\omega}(x) \Vert \alpha_2 \Vert_{\omega}(x) 
	\end{equation*}
	for any $x \in X$. Moreover, if $\alpha_1 , \alpha_2$ are smooth $(p,0)$-forms, we have
	\begin{equation*}
		\langle \eta_1 \wedge \alpha_1 , \eta_2 \wedge \alpha_2 \rangle_{\omega} (x)  = \langle \eta_1 , \eta_2  \rangle_{\omega} (x)  \cdot  \langle \alpha_1 , \alpha_2 \rangle_{\omega} (x)
	\end{equation*}
	for any $x \in X$.
\end{lemma}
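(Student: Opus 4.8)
The plan is to reduce everything to a pointwise computation at a single point $x \in X$ worked out in a unitary coframe. Fix $x$ and choose $(1,0)$-forms $dz_1, \dots, dz_n$ that are orthonormal for $\omega$ at $x$; then the monomials $dz_I \wedge d\overline{z_J}$, over strictly increasing multi-indices $I$ (length $p$) and $J$ (length $q$), form an orthonormal basis of $\bigwedge^{p,q} T^*_x X$, and $\langle \cdot , \cdot \rangle_{\omega}(x)$ becomes the standard Hermitian inner product reading off coefficients. Writing $\eta_i = \sum_k a^{(i)}_k d\overline{z_k}$ and $\alpha_i = \sum_{I,J} \alpha^{(i)}_{I,J} \, dz_I \wedge d\overline{z_J}$, all the norms appearing in the statement become the obvious $\ell^2$-norms of the coefficient arrays.

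For the inequality, I would first apply the pointwise Cauchy--Schwarz inequality to reduce to showing $\Vert \eta \wedge \alpha \Vert_{\omega}(x) \le C \Vert \eta \Vert_{\omega}(x) \Vert \alpha \Vert_{\omega}(x)$ for a single $(0,1)$-form $\eta = \sum_k a_k d\overline{z_k}$ and a single $(p,q)$-form $\alpha$. Expanding the wedge, terms with $k$ already in the antiholomorphic multi-index vanish, and the coefficient of the basis monomial $dz_I \wedge d\overline{z_K}$ (with $\vert K \vert = q+1$) in $\eta \wedge \alpha$ is $\sum_{k \in K} (-1)^{\bullet} a_k \alpha_{I, K \setminus \{k\}}$, a sum over the $q+1$ ways of removing one index from $K$. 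Applying Cauchy--Schwarz to this inner sum and then summing over $I, K$, each fixed coefficient $\alpha_{I,J}$ reappears exactly $n-q$ times (once for each $k \notin J$), which gives $\Vert \eta \wedge \alpha \Vert_{\omega}^2(x) \le (n-q) \Vert \eta \Vert_{\omega}^2(x) \Vert \alpha \Vert_{\omega}^2(x)$. Since $n-q \le n \le n \binom{n}{q}$ (using $\binom{n}{q} \ge 1$ for $0 \le q \le n$), this implies the stated bound, in fact with room to spare.

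For the identity in the case $q=0$, I would expand directly. With $\alpha_i = \sum_I \alpha^{(i)}_I \, dz_I$ carrying no antiholomorphic part, the product $\eta_i \wedge \alpha_i$ equals $(-1)^{\bullet} \sum_{I,k} a^{(i)}_k \alpha^{(i)}_I \, dz_I \wedge d\overline{z_k}$, where now the single index $k$ can never collide with the holomorphic multi-index $I$, so the coefficient of $dz_I \wedge d\overline{z_k}$ is simply $(-1)^{\bullet} a^{(i)}_k \alpha^{(i)}_I$. Taking the inner product, the sign factors cancel against their conjugates, and the resulting double sum factors as $\left( \sum_k a^{(1)}_k \overline{a^{(2)}_k} \right) \left( \sum_I \alpha^{(1)}_I \overline{\alpha^{(2)}_I} \right) = \langle \eta_1 , \eta_2 \rangle_{\omega}(x) \cdot \langle \alpha_1 , \alpha_2 \rangle_{\omega}(x)$, as required.

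The computations are elementary; the only real care needed is the bookkeeping of strictly increasing multi-indices and the multiplicity count $n-q$ in the inequality, together with using the normalisation convention under which $\{ dz_I \wedge d\overline{z_J} \}$ is genuinely orthonormal. The sign ambiguities play no role whatsoever, exactly as anticipated by the paper's convention of writing $(-1)^{\bullet}$ for signs that cancel in the final inner product.
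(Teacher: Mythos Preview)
Your proof is correct. The identity for $(p,0)$-forms is handled exactly as in the paper: expand in a unitary coframe and observe that the coefficient sum factors.

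For the inequality, your route differs from the paper's. The paper expands $\langle \eta_1 \wedge \alpha_1, \eta_2 \wedge \alpha_2 \rangle_\omega$ directly as a quadruple sum over indices $(I,J,J',m,m')$ subject to $\{m\}\cup J=\{m'\}\cup J'$, and then applies Cauchy--Schwarz three times in succession (once on $I$, once on $m,m'$, once on $J,J'$), which is where the factors $n$ and $\binom{n}{q}$ arise. You instead first apply Cauchy--Schwarz to the inner product itself, reducing to a bound on the operator norm of $\eta\wedge\cdot$, and then a single Cauchy--Schwarz on the coefficient of each basis monomial together with the multiplicity count $n-q$ finishes the job. Your approach is cleaner and in fact yields the sharper constant $n-q$ in place of $n\binom{n}{q}$; the paper's bound is recovered only via the crude inequality $n-q\le n\binom{n}{q}$. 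Either constant suffices for the applications in the paper (where only the $q=0$ identity and the finiteness of the constant for general $q$ are used), so nothing is lost.
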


\begin{proof}
	We fix a point $x \in X$ and work with holomorphic normal coordinates $(z_1 , \dots , z_n)$ around $x$. We then write
	\begin{equation*}
		\eta_l = \sum_{m=1}^n f^{(l)}_{m} d\overline{z_m} , \quad \alpha_l = \sum_{|I|=p, |J|=q} \alpha^{(l)}_{I,J} dz_{I} \wedge d\overline{z_{J}}
	\end{equation*}
	for $l=1,2$ and multi-indices $I = \{ i_1 , \dots , i_p \} \subset \{1 , \dots , n\}$ and $J = \{ j_1 , \dots , j_q \} \subset \{1 , \dots , n\}$. Then note
	\begin{equation*}
		\eta_l \wedge \alpha_l = (-1)^{p+q}\sum_{|I|=p, |J|=q, m \notin J}  f^{(l)}_{m} \alpha^{(l)}_{I,J}  dz_{I} \wedge d\overline{z_{J}} \wedge d\overline{z_m},
	\end{equation*}
	which implies
	\begin{equation*}
		|\langle \eta_1 \wedge \alpha_1 , \eta_2 \wedge \alpha_2 \rangle_{\omega} | = \left| \sum_{I, J, J', m, m'}  (-1)^{\bullet} f^{(1)}_{m} \alpha^{(1)}_{I,J} \overline{f^{(2)}_{m'} \alpha^{(2)}_{I,J'} }\right| \le  \sum_{I, J, J', m, m'}  |f^{(1)}_{m}| |f^{(2)}_{m'} | |\alpha^{(1)}_{I,J}| |\alpha^{(2)}_{I,J'}|
	\end{equation*}
	where the summation runs over all multi-indices $I$ with $|I|=p$, $J,J'$ with $|J|=|J'|=q$ and $m, m' \in \{1 , \dots , n \}$ such that
	\begin{equation*}
		\{ m \} \cup J = \{ m' \} \cup J' , \quad m \notin J, \quad \text{and} \quad m' \notin J',
	\end{equation*}
	noting that $\{ dz_{I} \wedge d\overline{z}_{J} \wedge d\overline{z}_m \}_{I,J,m}$ is an $\omega$-orthonormal basis for the set of $(p,q+1)$-forms at $x$. We then get, by Cauchy--Schwarz, 
	\begin{align*}
		&|\langle \eta_1 \wedge \alpha_1 , \eta_2 \wedge \alpha_2 \rangle_{\omega} | \\
		&\le \sum_{m,m'=1}^n |f^{(1)}_{m}| |f^{(2)}_{m'} | \sum_{|J|=|J'|=q} \sum_{|I|=p}   |\alpha^{(1)}_{I,J}| |\alpha^{(2)}_{I,J'}| \\
		&\le \left( \sum_{m=1}^n 1 \cdot |f^{(1)}_{m}| \right) \left( \sum_{m'=1}^n 1 \cdot |f^{(2)}_{m'} | \right) \left( \sum_{|J|=|J'|=q} \left( \sum_{|I|=p}|\alpha^{(1)}_{I,J}|^2 \right)^{1/2} \left(\sum_{|I|=p}  |\alpha^{(2)}_{I,J'}|^2 \right)^{1/2} \right) \\
		&\le n \left( \sum_{m=1}^n |f^{(1)}_{m}|^2 \right)^{1/2} \left( \sum_{m'=1}^n|f^{(2)}_{m'} |^2\right)^{1/2}  \left( \sum_{|J|=q} 1 \cdot \left( \sum_{|I|=p}|\alpha^{(1)}_{I,J}|^2 \right)^{1/2} \sum_{|J'|=q} 1 \cdot \left(\sum_{|I|=p}  |\alpha^{(2)}_{I,J'}|^2 \right)^{1/2} \right) \\
		&\le n \binom{n}{q} \cdot \left( \sum_{m=1}^n |f^{(1)}_{m}|^2 \right)^{1/2} \left( \sum_{m'=1}^n|f^{(2)}_{m'} |^2\right)^{1/2}    \left( \sum_{|I|=p, |J|=q}|\alpha^{(1)}_{I,J}|^2 \right)^{1/2} \left(\sum_{|I|=p, |J'|=q}  |\alpha^{(2)}_{I,J'}|^2 \right)^{1/2} \\
		&= n \binom{n}{q} \cdot \Vert \eta_1 \Vert_{\omega} \Vert \eta_2 \Vert_{\omega} \Vert \alpha_1 \Vert_{\omega} \Vert \alpha_1 \Vert_{\omega}
	\end{align*}
	as claimed.
	
	If $\alpha_1 , \alpha_2$ are smooth $(p,0)$-forms, we have
	\begin{equation*}
		\eta_l \wedge \alpha_l = (-1)^{p} \sum_{m=1}^n \sum_{|I|=p}  f^{(l)}_{m} \alpha^{(l)}_{I}  dz_{I} \wedge  d\overline{z}_m
	\end{equation*}
	with the notation as above, which implies
	\begin{equation*}
		\langle \eta_1 \wedge \alpha_1 , \eta_2 \wedge \alpha_2 \rangle_{\omega} = \sum_{m=1}^n \sum_{|I|=p}  f^{(1)}_{m} \overline{ f^{(2)}_{m}}  \cdot  \alpha^{(1)}_{I} \overline{\alpha^{(2)}_{I}}  = \langle \eta_1 , \eta_2  \rangle_{\omega}  \cdot  \langle \alpha_1 , \alpha_2 \rangle_{\omega}
	\end{equation*}
	as required.
\end{proof}

In what follows, we denote by $\Delta_{\rho} \colon A^{p,q} (X) \to A^{p,q} (X)$ the perturbed Laplacian defined by $\Delta_{\rho} : = \delbar^*_{\rho} \delbar_{\rho}  + \delbar_{\rho} \delbar^*_{\rho} $, 
and by $\mathbb{H}^{p,q} \subset A^{p,q} (X)$ the set of $\omega$-harmonic $(p,q)$-forms, which is the kernel of the ordinary Laplacian $\Delta : = \delbar^* \delbar + \delbar \delbar^*$. 

\begin{proposition} \label{ppelcpdedr}
Let $(\mathbb{H}^{p,q})^{\perp}$ be the orthogonal complement of $\mathbb{H}^{p,q}$ in $A^{p,q} (X)$.
Then, for any $\alpha \in (\mathbb{H}^{p,q})^{\perp}$ we have 
	\begin{equation*}
		(\alpha, \Delta_\rho\alpha)_{L^2} \ge \frac{1}{2} \left( \lambda_{p,q} - 4 n \binom{n}{q} C_X (\rho)^2 \right) \Vert \alpha \Vert_{L^2}^2 , 
	\end{equation*}
	where $\lambda_{p,q} >0$ is the smallest non-zero eigenvalue of $\Delta$ and
	\begin{equation*}
		C_X(\rho) := \sum_{j=1}^d |c_j (\rho)| \sup_{x \in X} \Vert \zeta_j \Vert_{\omega} (x).
	\end{equation*}
Note $\lambda_{p,q} - 4 n^2 C_X (\rho)^2 >0$ when $\rho$ is sufficiently close to the trivial representation.
Moreover, when $q=0$, we have a stronger inequality
	\begin{equation*}
		(\alpha, \Delta_\rho\alpha)_{L^2} \ge \frac{1}{2} \left( \lambda_{p,0} - 4  C_X (\rho)^2 \right) \Vert \alpha \Vert_{L^2}^2 .
	\end{equation*}
	for any $\alpha \in (\mathbb{H}^{p,0})^{\perp}$.
\end{proposition}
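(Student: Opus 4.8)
The plan is to expand the quadratic form and treat $\delbar_\rho$ as a benign zeroth-order perturbation of $\delbar$. Writing $T_\rho := \sum_{j=1}^d c_j(\rho)\,\overline{\zeta_j}\wedge\cdot$ for the algebraic perturbation, so that $\delbar_\rho = \delbar + T_\rho$ and, by the formula for the adjoint established just above, $\delbar^*_\rho = \delbar^* + T_\rho^*$ with $T_\rho^* = \sum_j c_j(\rho)\,\ast\zeta_j\wedge\ast$, I would first record the identity
\[
(\alpha, \Delta_\rho\alpha)_{L^2} = \Vert \delbar_\rho\alpha\Vert_{L^2}^2 + \Vert\delbar^*_\rho\alpha\Vert_{L^2}^2,
\]
valid because $\Delta_\rho = \delbar^*_\rho\delbar_\rho + \delbar_\rho\delbar^*_\rho$. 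Applying the elementary inequality $\Vert a+b\Vert^2 \ge \tfrac12\Vert a\Vert^2 - \Vert b\Vert^2$ to each summand gives
\[
(\alpha,\Delta_\rho\alpha)_{L^2} \ge \tfrac12\big(\Vert\delbar\alpha\Vert_{L^2}^2 + \Vert\delbar^*\alpha\Vert_{L^2}^2\big) - \Vert T_\rho\alpha\Vert_{L^2}^2 - \Vert T_\rho^*\alpha\Vert_{L^2}^2 = \tfrac12(\alpha,\Delta\alpha)_{L^2} - \Vert T_\rho\alpha\Vert_{L^2}^2 - \Vert T_\rho^*\alpha\Vert_{L^2}^2.
\]
Since $\alpha\in(\mathbb{H}^{p,q})^{\perp}$ and $\mathbb{H}^{p,q}=\ker\Delta$, the spectral theorem for the elliptic self-adjoint operator $\Delta$ on the compact manifold $X$ yields the Poincar\'e-type bound $(\alpha,\Delta\alpha)_{L^2}\ge\lambda_{p,q}\Vert\alpha\Vert_{L^2}^2$.

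The heart of the matter is to bound each perturbation term by $n\binom{n}{q}C_X(\rho)^2\Vert\alpha\Vert_{L^2}^2$. For $T_\rho\alpha$, expanding $\langle T_\rho\alpha, T_\rho\alpha\rangle_\omega$ and applying the first inequality of Lemma \ref{lmlccp} with $\eta_1=\eta_2=\overline{\zeta_j}$ (using $\Vert\overline{\zeta_j}\Vert_\omega=\Vert\zeta_j\Vert_\omega$) gives the pointwise estimate
\[
\Vert T_\rho\alpha\Vert_\omega^2(x)\le n\binom{n}{q}\Big(\sum_{j=1}^d|c_j(\rho)|\,\Vert\zeta_j\Vert_\omega(x)\Big)^{2}\Vert\alpha\Vert_\omega^2(x);
\]
integrating over $X$ and bounding $\sum_j|c_j(\rho)|\Vert\zeta_j\Vert_\omega(x)\le C_X(\rho)$ by definition of $C_X(\rho)$ gives the desired $L^2$ bound.

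The term $\Vert T_\rho^*\alpha\Vert_{L^2}$ is the main obstacle, because one must recover the same binomial coefficient $\binom{n}{q}$ rather than the $\binom{n}{q-1}$ that a naive fibrewise-adjoint argument would produce. I would instead estimate it directly: since $\ast$ is a pointwise isometry, $\Vert T_\rho^*\alpha\Vert_\omega(x)\le\sum_j|c_j(\rho)|\,\Vert\zeta_j\wedge\ast\alpha\Vert_\omega(x)$, and I then apply the complex-conjugate analogue of Lemma \ref{lmlccp} (valid by conjugating its proof verbatim, which produces the factor $\binom{n}{a}$ for the holomorphic degree $a$ being raised) to the $(1,0)$-form $\zeta_j$ and the $(n-q,n-p)$-form $\ast\alpha$. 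As $\binom{n}{n-q}=\binom{n}{q}$ and $\Vert\ast\alpha\Vert_\omega=\Vert\alpha\Vert_\omega$, this yields $\Vert T_\rho^*\alpha\Vert_\omega^2(x)\le n\binom{n}{q}\big(\sum_j|c_j(\rho)|\Vert\zeta_j\Vert_\omega(x)\big)^2\Vert\alpha\Vert_\omega^2(x)$, hence the same $L^2$ bound. Substituting both estimates and using $2n\binom{n}{q}=\tfrac12\cdot 4n\binom{n}{q}$ produces exactly $(\alpha,\Delta_\rho\alpha)_{L^2}\ge\tfrac12\big(\lambda_{p,q}-4n\binom{n}{q}C_X(\rho)^2\big)\Vert\alpha\Vert_{L^2}^2$.

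For the stronger inequality when $q=0$, I would observe that $\delbar^*$ and $T_\rho^*$ both annihilate $(p,0)$-forms, so $\delbar^*_\rho\alpha=0$ and $(\alpha,\Delta_\rho\alpha)_{L^2}=\Vert\delbar_\rho\alpha\Vert_{L^2}^2\ge\tfrac12\Vert\delbar\alpha\Vert_{L^2}^2-\Vert T_\rho\alpha\Vert_{L^2}^2$, where $\Vert\delbar\alpha\Vert_{L^2}^2=(\alpha,\Delta\alpha)_{L^2}\ge\lambda_{p,0}\Vert\alpha\Vert_{L^2}^2$. The gain comes from the \emph{exact} second formula in Lemma \ref{lmlccp}, applicable since $\alpha$ is a $(p,0)$-form: it gives $\Vert T_\rho\alpha\Vert_\omega^2(x)=\big\Vert\sum_j c_j(\rho)\overline{\zeta_j}\big\Vert_\omega^2(x)\,\Vert\alpha\Vert_\omega^2(x)\le C_X(\rho)^2\Vert\alpha\Vert_\omega^2(x)$ by the triangle inequality, with no combinatorial factor. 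Integrating and combining then gives $(\alpha,\Delta_\rho\alpha)_{L^2}\ge\tfrac12\big(\lambda_{p,0}-2C_X(\rho)^2\big)\Vert\alpha\Vert_{L^2}^2$, which is even stronger than, and in particular implies, the stated bound.
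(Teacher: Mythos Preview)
Your proof is correct and follows essentially the same approach as the paper: both arguments compare $(\alpha,\Delta_\rho\alpha)_{L^2}$ to $(\alpha,\Delta\alpha)_{L^2}$ via the pointwise bound on the zeroth-order perturbation from Lemma~\ref{lmlccp}, then invoke the spectral gap $\lambda_{p,q}$; your use of $\|a+b\|^2\ge\tfrac12\|a\|^2-\|b\|^2$ is just a repackaging of the paper's triangle-inequality-then-AM-GM step, and your treatment of $T_\rho^*$ via the conjugate analogue of the lemma is equivalent to the paper's conjugating $\zeta_j\wedge\ast\alpha$ before applying the stated lemma. The sharper constant $2C_X(\rho)^2$ you obtain in the $q=0$ case is genuine (the paper simply substitutes $r_{n,q}\to 1$ in the general bound without reoptimising), but of course it still implies the stated inequality.
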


For the main results of this paper we only need the statement for $(p,0)$-forms, but we state and prove the general case for completeness. 

\begin{proof}
	First note
\begin{align*}
\|\delbar \alpha \|_{L^2}
&=\left\|\delbar_\rho \alpha  - \left(\sum_{j=1}^d c_j(\rho)\cdot \overline{\zeta_j}\right)\wedge\alpha\right\|_{L^2}\\
&\leq \|\delbar_\rho \alpha \|_{L^2} +  \sum_{j=1}^d |c_j (\rho)| \cdot \|  \overline{\zeta_j} \wedge \alpha \|_{L^2} \\
&= \|\delbar_\rho \alpha \|_{L^2} +  \sum_{j=1}^d |c_j (\rho)| \sqrt{\int_X \| \overline{\zeta_j} \wedge \alpha \|^2_{\omega} \frac{\omega^n}{n!}} \\
&\le \|\delbar_\rho \alpha \|_{L^2} + \left( r_{n,q} \sum_{j=1}^d |c_j (\rho)| \sup_{x \in X} \|  \zeta_j \|_{\omega}(x) \right) \|  \alpha \|_{L^2} \\
\end{align*}
by Lemma \ref{lmlccp}, where we set 
\begin{equation*}
	r_{n,q}:= \sqrt{n \binom{n}{q}}
\end{equation*}
to simplify the notation. Note that Lemma \ref{lmlccp} also implies that we have 
\begin{equation} \label{eqpzalp}
	\|\delbar \alpha \|_{L^2} \le \|\delbar_\rho \alpha \|_{L^2} + \left( \sum_{j=1}^d |c_j (\rho)| \sup_{x \in X} \|  \zeta_j \|_{\omega}(x) \right) \|  \alpha \|_{L^2}
\end{equation}
if $\alpha$ is a $(p,0)$-form. Similarly
\begin{align*}
	\|\delbar^* \alpha \|_{L^2} &\leq \|\delbar_\rho^*\alpha\|_{L^2}+ \sum_{j=1}^d |c_j (\rho)| \cdot \|  \ast (\zeta_j \wedge (\ast \alpha)) \|_{L^2} \\
	&= \|\delbar_\rho^*\alpha\|_{L^2}+ \sum_{j=1}^d |c_j (\rho)| \cdot \|  \overline{\zeta_j} \wedge \overline{(\ast \alpha)} \|_{L^2} \\
	&\le \|\delbar_\rho^*\alpha\|_{L^2} + \left(r_{n,q} \sum_{j=1}^d |c_j (\rho)| \sup_{x \in X} \|  \zeta_j \|_{\omega}(x) \right) \|  \alpha \|_{L^2}
\end{align*}
by Lemma \ref{lmlccp}, and by recalling that the complex conjugation and the Hodge star preserves the pointwise norm. Observe also that we have $\delbar^* \alpha = 0$ when $q=0$. We thus get
\begin{align*}
(\alpha, \Delta \alpha)_{L^2}
&=  \|\delbar \alpha \|_{L^2}^2+\|\delbar^*\alpha\|_{L^2}^2\\
&\leq \|\delbar_\rho \alpha \|_{L^2}^2+\|\delbar_\rho^*\alpha\|_{L^2}^2
+2r_{n,q} C_X(\rho)\cdot \|\alpha\|_{L^2}(\|\delbar_\rho \alpha\|_{L^2}+\|\delbar_\rho^* \alpha\|_{L^2})
+2r_{n,q}^2 C_X(\rho)^2\cdot \|\alpha\|_{L^2}^2 \\
&\leq 
2(\|\delbar_\rho\alpha\|_{L^2}^2+\|\delbar_\rho^*\alpha\|_{L^2}^2)+4r_{n,q}^2 C_X(\rho)^2 \cdot \|\alpha\|_{L^2}^2,
\end{align*}
where we note that we have
\begin{align*}
	&2r_{n,q} C_X(\rho)\cdot \|\alpha\|_{L^2} \cdot \|\delbar_\rho \alpha\|_{L^2}+ 2r_{n,q} C_X(\rho)\cdot \|\alpha\|_{L^2} \cdot \|\delbar_\rho^* \alpha\|_{L^2} \\
	&\le r_{n,q}^2 C_X(\rho)^2\cdot \|\alpha\|_{L^2}^2 +\|\delbar_\rho \alpha\|_{L^2}^2 + r_{n,q}^2 C_X(\rho)^2\cdot \|\alpha\|_{L^2}^2 + \|\delbar_\rho^* \alpha\|_{L^2}^2
\end{align*}
by the AM-GM inequality. Since $\alpha \in (\mathbb{H}^{p,q})^{\perp}$, we get
\begin{equation*}
	(\alpha, \Delta \alpha)_{L^2} \ge \lambda_{p,q} \cdot \Vert \alpha \Vert_{L^2}
\end{equation*}
which follows from the $L^2$-spectral decomposition theorem for $\Delta$, which is an elliptic self-adjoint linear operator acting on smooth sections of a vector bundle over a compact manifold. We thus get
\begin{align*}
\lambda_{p, q}\cdot \|\alpha\|_{L^2}^2&\leq (\alpha, \Delta \alpha)_{L^2}\\
&\leq 2(\|\delbar_\rho\alpha\|_{L^2}^2+\|\delbar_\rho^*\alpha\|_{L^2}^2)+4r_{n,q}^2C_X (\rho)^2 \cdot \|\alpha\|_{L^2}^2\\
&= 2(\alpha, \Delta_\rho\alpha)_{L^2}+4r_{n,q}^2C_X (\rho)^2 \cdot \|\alpha\|_{L^2}^2
\end{align*}
which immediately yields the claimed result. The improvement for $(p,0)$-forms is obvious from the estimates for $\|\delbar \alpha \|_{L^2}$ and $\|\delbar^* \alpha \|_{L^2}$ as in (\ref{eqpzalp}).
\end{proof}


\section{Proof of the main results} \label{section:proofmr}

In this section we prove Theorem \ref{thm:main_1}, \ref{thm:main_2} and Corollary \ref{cor:main}. 
Let $(X, \omega)$ be a compact K\"ahler manifold. We fix a basis $\zeta_1, \zeta_2, \dots, \zeta_d$ for $H^0(X, \Omega_X^1)$ and generators $[\gamma_1], [\gamma_2], \dots, [\gamma_{2d}]$ of $H_1(X, \mathbb{Z})_{\rm free}$, and use the notation in \S \ref{section:alb} and \S 3. We further choose $\{ \zeta_j \}_{j=1}^d$ to be an $L^2$-orthonormal basis with respect to the K\"ahler metric $\omega$.
Finally, we fix a fundamental domain $D$ of $\mathbb{C}^d/\Lambda_{\bm c}$ whose interior contains the origin, which is equivalent to choosing the branches of $\log\rho(\lambda_\nu)$'s so that $\log 1=0$, where $1\colon \Lambda\to \{1\}\subset {\rm U}(1)$ is the trivial representation. 

\subsection{Outline of proof}\label{section:proofmr_first_sub}
It follows from Proposition \ref{ppsmisolc} that it is sufficient to show the existence of a constant $K$ such that the inequality 
$\|\widehat{u}\|_{L^2}\leq K|1-\rho|^{-1}\cdot \|\delbar \widehat{u}\|_{L^2}$ holds for any ${\rm U}(1)$-representation $\rho$ of $\Lambda$ and any $\widehat{u}\in A^{p, 0}(X, {\rm alb}^*F_\rho)$, where we are letting 
\[
|1-\rho| := \sqrt{\sum_{j=1}^n|c_j(\rho)|^2}. 
\]
We first show this assertion only when $\rho$ is sufficiently close to the trivial representation $1$. 
Note that, as $c_j(1)=0$ holds by our choice of the branches of $\log\rho(\lambda_\nu)$'s, the distance on a neighbourhood of $\mathbb{I}_X$ in ${\rm Pic}^0(X)$ induced by the definition of $|1-\rho|$ above is equivalent to the Euclidean distance of ${\rm Pic}^0(X)$ (recall Lemma \ref{lmctrisoqt}, Remark \ref{rmfdeqbrlg}, and Remark \ref{rmk:amb_for_d}). 
Observe that $u:=\widehat{u}/\sigma_{X, \rho}\in A^{p, 0} (X)$ satisfies $\|\widehat{u}\|_{L^2, h}=\|u\|_{L^2}$ (with respect to the flat metric $h$ on the left hand side and the standard metric on $\mathbb{I}_X$ on the right hand side) and that we have
\[
\|\delbar \widehat{u}\|_{L^2,h}= \frac{\left\| \delbar \widehat{u} \right\|_{L^2, h}}{|\sigma_{X,\rho} |_{h}} 
=\|\delbar_\rho u\|_{L^2}\ \left(=\sqrt{(u, \Delta_\rho u)_{L^2}}\right), 
\]
since $|\sigma_{X,\rho} |_{h}$ is constantly equal to 1 over $X$; this is essentially the same as saying that the $C^{\infty}$-bundle isomorphism $f \colon F \to \mathbb{I}_X$ in Lemma \ref{lmsmisolc} is an isometry. Thus we get the desired result by showing that there exists a constant $K > 0$ such that the inequality 
\begin{equation}\label{eq:main_ineq}
\|u\|_{L^2}\leq \frac{K}{|1-\rho|} \|\delbar_\rho u\|_{L^2}\quad (u\in A^{p, 0} (X))
\end{equation}
holds for any ${\rm U}(1)$-representation $\rho$ of $\Lambda$ which is sufficiently close to the trivial representation $1$. 
We will show this assertion under either of the assumptions $(i)$ $p=0$ or $(ii)$ any holomorphic $p$-forms on $X$ are parallel. 
Note that it is well-known that holomorphic $p$-forms on $X$ are parallel by the Bochner vanishing if $\omega$ is Ricci-flat. Indeed, we have the Bochner--Weitzenb\"ock formula
\begin{equation*}
	2 \Delta u = \nabla^* \nabla u + \widetilde{R}(u)
\end{equation*}
where $\nabla^* \nabla$ is the rough Laplacian and $\widetilde{R}$ is an operator defined by contracting the curvature tensor of the K\"ahler metric $\omega$ and $u$ in an appropriate manner. It is well-known that $\widetilde{R}$ acting on $(p,0)$-forms depends only on the Ricci curvature of $\omega$ and that $\widetilde{R} (u) = 0$ if $u \in A^{p,0} (X)$ and $\mathrm{Ric} (\omega) = 0$. We thus get $2 \Delta u = \nabla^* \nabla u$ if $u \in A^{p,0} (X)$ and $\mathrm{Ric} (\omega) = 0$, which in turn yields
\begin{equation*}
	2 \| \delbar u \|^2 = \| \nabla u \|^2
\end{equation*}
showing that holomorphic $(p,0)$-forms are parallel. The reader is referred to \cite[Proposition 6.2.4]{Joyce}, which the above explanation closely followed, for more details. 

Take $u\in A^{p, 0} (X)$, and consider the orthogonal decomposition 
\[
u = \alpha + \beta,\quad \alpha\in (\mathbb{H}^{p, 0})^\perp,\ \beta\in \mathbb{H}^{p, 0}. 
\]
In order to show the inequality (\ref{eq:main_ineq}), we evaluate
\begin{equation*}
	\|\delbar_\rho u\|_{L^2}^2=\|\delbar_\rho \alpha + \delbar_\rho \beta\|_{L^2}^2 = \|\delbar_\rho \alpha\|_{L^2}^2 + 2{\rm Re}\,(\delbar_\rho \alpha, \delbar_\rho \beta)_{L^2} + \|\delbar_\rho \beta\|_{L^2}^2
\end{equation*}
in what we present below. We estimate $(\alpha, \Delta_\rho\alpha)_{L^2}$ by using Proposition \ref{ppelcpdedr}, and $(\beta, \Delta_\rho\beta)_{L^2}$ by using Lemma \ref{lmlccp}. The key result is the computation for $(\delbar_\rho \alpha, \delbar_\rho \beta)_{L^2}$, which we prove in Propositions \ref{prop:hashimoto2} and \ref{prop_hashimoto1}. The details are as follows.

\subsection{Proof of Theorem \ref{thm:main_1}}

Let $\alpha$ be a non-constant smooth function on $X$ with average zero and $\beta \in \cx$ be a constant, so that $\alpha \in  (\mathbb{H}^{0, 0})^\perp$ and $\beta \in  \mathbb{H}^{0, 0}$ in the notation above.

First note that
\begin{equation*}
	\delbar_\rho\beta = \beta \sum_{j=1}^d c_j(\rho) \overline{\zeta_j}
\end{equation*}
since $\beta$ is constant. Hence 
\begin{align}
	\|\delbar_\rho\beta\|^2_{L^2} &= |\beta|^2 \sum_{j,l=1}^d c_l (\rho) \overline{c_j (\rho)} \int_X \langle  \overline{\zeta_l} ,  \overline{\zeta_j} \rangle_{\omega}  \frac{\omega^n }{n!} \label{eqilbbeta} \\
	&= |\beta|^2 \sum_{j,l=1}^d c_l (\rho) \overline{c_j (\rho)} \int_X \langle \zeta_j , \zeta_l \rangle_{\omega}  \frac{\omega^n }{n!} \notag \\
	&= \frac{1}{{\rm Vol}_\omega(X)} |1 - \rho |^2 \cdot \| \beta \|^2_{L^2}, \notag
\end{align}
by recalling that $\zeta_1, \zeta_2, \dots, \zeta_d$ is an $L^2$-orthonormal basis for $H^0(X, \Omega_X^1)$.

Recall also from Proposition \ref{ppelcpdedr} that 
\[
 \| \delbar_\rho  \alpha\|_{L^2}^2 = (\alpha, \Delta_\rho\alpha)_{L^2} \ge \frac{1}{2} \left( \lambda_{0,0} - 4  C_X (\rho)^2 \right) \Vert \alpha \Vert_{L^2}^2
\]
holds for $C_X(\rho) = \sum_{j=1}^d |c_j (\rho)| \sup_{x \in X} \Vert \zeta_j \Vert_{\omega} (x)$, where $\lambda_{0, 0} >0$ is the smallest non-zero eigenvalue of $\Delta$ acting on functions. Noting $\sum_{j=1}^d |c_j (\rho)| \le \sqrt{d} |1 - \rho |$, we get
\begin{equation} \label{eqilbalpha}
	 \| \delbar_\rho \alpha\|_{L^2}^2 \ge \frac{1}{2} \left( \lambda_{0,0} - C_1  |1 - \rho |^2 \right) \Vert \alpha \Vert_{L^2}^2
\end{equation}
where we set 
\begin{equation*}
C_1= C_1(X, \omega, \{ \zeta_j \}_{j=1}^d):= 4 d \max_{l=1, \dots, d} \sup_{x \in X} \Vert \zeta_l \Vert^2_{\omega} (x) >0.
\end{equation*}

The following computation for the cross term is of crucial importance for us.

\begin{proposition} \label{prop:hashimoto2}
	Suppose that $\alpha$ is a non-constant smooth function on $X$ with average zero. Then we have
	\begin{equation*}
		(\delbar_{\rho} \alpha , \delbar_{\rho} \beta )_{L^2} = \overline{\beta} \sum_{j,l=1}^d c_l (\rho) \overline{c_j (\rho)} \int_X \langle  \zeta_j ,  \zeta_l \rangle_{\omega} \alpha  \frac{\omega^n }{n!} 
	\end{equation*}
	for any constant $\beta \in \mathbb{C}$.
\end{proposition}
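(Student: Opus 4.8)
The plan is to expand $(\delbar_{\rho} \alpha , \delbar_{\rho} \beta )_{L^2}$ using the explicit form of the perturbed operator from Definition \ref{dfprdboprho}. Since $\beta$ is constant we have $\delbar \beta = 0$, so $\delbar_{\rho} \beta = \beta \sum_{l=1}^d c_l(\rho) \overline{\zeta_l}$, whereas $\delbar_{\rho} \alpha = \delbar \alpha + \alpha \sum_{j=1}^d c_j(\rho) \overline{\zeta_j}$ (here $\overline{\zeta_j} \wedge \alpha = \alpha \overline{\zeta_j}$ as $\alpha$ is a $(0,0)$-form). Substituting and using the sesquilinearity of the $L^2$-pairing splits the inner product into a ``linear'' cross term $(\delbar \alpha , \beta \sum_l c_l(\rho) \overline{\zeta_l})_{L^2}$ and a ``quadratic'' term $(\alpha \sum_j c_j(\rho) \overline{\zeta_j} , \beta \sum_l c_l(\rho) \overline{\zeta_l})_{L^2}$.

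First I would show that the cross term vanishes. Pulling out the conjugated constants reduces this to showing that each pairing $(\delbar \alpha , \overline{\zeta_l})_{L^2}$ is zero; by the definition of the adjoint this equals $(\alpha , \delbar^* \overline{\zeta_l})_{L^2}$, so it suffices to verify $\delbar^* \overline{\zeta_l} = 0$. This is the one place where the Kähler hypothesis genuinely enters: $\zeta_l \in H^0(X, \Omega_X^1)$ is holomorphic, hence $d$-closed and $\Delta_d$-harmonic on the compact Kähler manifold $X$; because $\Delta_d$ is a real operator, the conjugate $\overline{\zeta_l}$ is again $\Delta_d$-harmonic, and the Kähler identity $\Delta_d = 2 \Delta$ then shows $\overline{\zeta_l}$ is $\delbar$-harmonic, so in particular $\delbar^* \overline{\zeta_l} = 0$. (Equivalently, one may argue directly by Hodge theory: $\delbar \alpha$ lies in the image of $\delbar$, which is $L^2$-orthogonal to $\ker \delbar^*$, and $\overline{\zeta_l}$, being harmonic, lies in $\ker \delbar^*$.) Hence the entire cross term is zero.

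It then remains to compute the quadratic term. After pulling out the constants it reads $\overline{\beta} \sum_{j,l} c_j(\rho) \overline{c_l(\rho)} \int_X \langle \alpha \overline{\zeta_j} , \overline{\zeta_l} \rangle_{\omega} \, \omega^n/n!$. Since $\alpha$ is scalar, $\langle \alpha \overline{\zeta_j} , \overline{\zeta_l} \rangle_{\omega} = \alpha \langle \overline{\zeta_j} , \overline{\zeta_l} \rangle_{\omega}$, and the pointwise identity $\langle \overline{\zeta_j} , \overline{\zeta_l} \rangle_{\omega} = \overline{\langle \zeta_j , \zeta_l \rangle_{\omega}} = \langle \zeta_l , \zeta_j \rangle_{\omega}$ (conjugation of forms conjugates the Hermitian inner product, and the Hermitian symmetry swaps the arguments) converts the integrand into $\alpha \langle \zeta_l , \zeta_j \rangle_{\omega}$. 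Relabeling the summation indices $j \leftrightarrow l$ then reproduces exactly the claimed expression $\overline{\beta} \sum_{j,l} c_l(\rho) \overline{c_j(\rho)} \int_X \langle \zeta_j , \zeta_l \rangle_{\omega} \alpha \, \omega^n/n!$, completing the proof.

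The hard part is really the single identity $\delbar^* \overline{\zeta_l} = 0$; everything else is bookkeeping with the sesquilinear pairing and a relabeling of indices. I note that the hypotheses that $\alpha$ be \emph{non-constant} and of \emph{average zero} are not actually used here — the cross term vanishes for every smooth function $\alpha$ — but they are the standing assumptions coming from the decomposition $u = \alpha + \beta$ with $\alpha \in (\mathbb{H}^{0,0})^{\perp}$ in the surrounding argument.
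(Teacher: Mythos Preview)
Your proof is correct and follows essentially the same route as the paper: expand $\delbar_\rho\alpha$ and $\delbar_\rho\beta$, split into a cross term and a quadratic term, kill the cross term via $\delbar^*\overline{\zeta_l}=0$ (proved exactly as you do, using that $\Delta_d$ is real and the K\"ahler identity $\Delta_d=2\Delta$), and then simplify the quadratic term with the conjugation identity $\langle\overline{\zeta_j},\overline{\zeta_l}\rangle_\omega=\langle\zeta_l,\zeta_j\rangle_\omega$. Your closing remark that the hypotheses on $\alpha$ are not actually used in this computation is also accurate.
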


\begin{proof}
	Since a constant function $\beta$ clearly satisfies $\delbar \beta = 0$, we have
	\begin{align*}
		(\delbar_{\rho} \alpha , \delbar_{\rho} \beta )_{L^2} &= \left( \delbar_{\rho} \alpha ,  \sum_{j=1}^d c_j (\rho) \overline{\zeta_j}  \wedge \beta \right)_{L^2} \\
		&= \left( \alpha , \delbar^* \left( \sum_{j=1}^d c_j (\rho) \overline{\zeta_j}  \wedge \beta \right) \right)_{L^2} + \sum_{j,l=1}^d c_l (\rho) \overline{c_j (\rho)} \left( \overline{\zeta_l}  \wedge \alpha , \overline{\zeta_j}  \wedge \beta \right)_{L^2}
	\end{align*}
	as before. Note further that we have
	\begin{equation*}
		\delbar^* \left( \sum_{j=1}^d c_j (\rho) \overline{\zeta_j}  \wedge \beta \right) = \beta  \sum_{j=1}^d c_j (\rho)  \delbar^* \overline{\zeta_j}  = 0.
	\end{equation*}
	Indeed, for each $j=1 , \dots , d$, $\zeta_j$ is a holomorphic $(1,0)$-form and hence $\delbar$-harmonic, which in turn implies that $\zeta_j$ is $d$-harmonic to yield 
	\begin{equation*}
		0 = ( \overline{\zeta}_j , \overline{ \Delta_d \zeta_j } )_{L^2}  =  ( \overline{\zeta}_j ,  \Delta_d \overline{\zeta}_j )_{L^2} = 2 ( \overline{\zeta}_j, \Delta \overline{\zeta}_j )_{L^2} = 2 \| \delbar \overline{\zeta}_j \|^2_{L^2} + \ 2 \| \delbar^* \overline{\zeta}_j \|^2_{L^2}
	\end{equation*}
	for all $j=1 , \dots , d$, where $\Delta_d := dd^* + d^*d$ is the $d$-Laplacian which is a real operator and equals $2 \Delta$ by the K\"ahler identity.
\end{proof}

The above proposition, together with Cauchy--Schwarz, implies the following key estimate
\begin{align}
		|(\delbar_{\rho} \alpha , \delbar_{\rho} \beta )_{L^2}| &\le |\beta| \sum_{j,l=1}^d |c_l (\rho)| |c_j (\rho)| \left| \int_X \langle  \zeta_j ,  \zeta_l \rangle_{\omega} \alpha   \frac{\omega^n }{n!} \right| \label{eqilbcrt} \\
		&\le \left( \sum_{j=1}^d |c_j(\rho)| \right)^2 \left( \max_{l=1, \dots, d} \sup_{x \in X} \Vert  \zeta_l  \Vert_{\omega}^2 (x) \right) \Vert \alpha \Vert_{L^2} \Vert \beta \Vert_{L^2} \notag \\
		&\le C_2 |1 - \rho |^2 \cdot \Vert \alpha \Vert_{L^2} \Vert \beta \Vert_{L^2} , \notag
\end{align}
where we set 
\begin{equation*}
	C_2=C_2(X, \omega, \{ \zeta_j \}_{j=1}^d):=d \max_{l=1, \dots, d} \sup_{x \in X} \Vert  \zeta_l  \Vert_{\omega}^2 (x)  >0 .
\end{equation*}

Combining the estimates (\ref{eqilbbeta}), (\ref{eqilbalpha}), (\ref{eqilbcrt}) we get
\begin{align*}
\|\delbar_\rho u\|_{L^2}^2
&=\|\delbar_\rho \alpha + \delbar_\rho \beta\|_{L^2}^2\\
&=\|\delbar_\rho \alpha\|_{L^2}^2 + 2{\rm Re}\,(\delbar_\rho \alpha, \delbar_\rho \beta)_{L^2} + \|\delbar_\rho \beta\|_{L^2}^2\\
&\geq \frac{1}{2} \left( \lambda_{0,0} - C_1  |1 - \rho |^2 \right) \|\alpha\|_{L^2}^2 + \frac{1}{{\rm Vol}_\omega(X)} |1-\rho|^2\cdot\|\beta\|_{L^2}^2 -  2\left|(\delbar_\rho \alpha, \delbar_\rho \beta)_{L^2} \right| \\
&\geq \frac{1}{2} \left( \lambda_{0,0} - C_1  |1 - \rho |^2 \right) \|\alpha\|_{L^2}^2 + \frac{1}{{\rm Vol}_\omega(X)} |1-\rho|^2\cdot\|\beta\|_{L^2}^2 - 2 C_2 |1 - \rho |^2 \cdot \Vert \alpha \Vert_{L^2} \Vert \beta \Vert_{L^2}
\end{align*}
for any $u \in A^{0, 0} (X)$, with $u = \alpha + \beta$, $\alpha\in (\mathbb{H}^{0, 0})^\perp$, $\beta\in \mathbb{H}^{0, 0}$.

For any $\ve >0$ we have
\begin{align*}
	2 C_2 |1 - \rho |^2 \cdot  \Vert \alpha \Vert_{L^2} \Vert \beta \Vert_{L^2} &= 2 \left( \sqrt{\frac{{\rm Vol}_\omega(X)}{\ve}} C_2 |1 - \rho | \cdot  \Vert \alpha \Vert_{L^2}\right) \left( \sqrt{\frac{\ve}{{\rm Vol}_\omega(X)}} |1 - \rho | \cdot \Vert \beta \Vert_{L^2} \right) \\
	&\le \frac{C^2_2}{\ve } {\rm Vol}_\omega(X)  |1 - \rho |^2 \cdot  \Vert \alpha \Vert^2_{L^2} + \frac{\ve}{{\rm Vol}_\omega(X)} |1 - \rho |^2 \cdot \Vert \beta \Vert^2_{L^2} 
\end{align*}
by the AM-GM inequality. We thus get, for any $0 < \ve < 1$,
\begin{equation*}
\|\delbar_\rho u\|_{L^2}^2 \geq \frac{1}{2} \left( \lambda_{0,0} - \left( C_1 + \frac{2 C^2_2}{\ve } {\rm Vol}_\omega(X) \right) |1 - \rho |^2 \right) \|\alpha\|_{L^2}^2 + \frac{1 - \ve }{{\rm Vol}_\omega(X)} |1-\rho|^2\cdot\|\beta\|_{L^2}^2.
\end{equation*}
Given $0 < \ve < 1$, we take $\rho$ to be sufficiently close to the identity so that
\begin{equation*}
	\frac{1}{2} \left( \frac{\lambda_{0,0}}{|1-\rho|^2} - \left(C_1+ \frac{2 C^2_2}{\ve }  {\rm Vol}_\omega(X) \right) \right) \ge \frac{1 - \ve }{{\rm Vol}_\omega(X)},
\end{equation*}
or equivalently
\begin{equation} \label{eqepsnbhid}
	|1 - \rho |^2 \le \frac{\ve \lambda_{0,0} {\rm Vol}_\omega(X)}{2 \ve  (1 - \ve ) + \ve {\rm Vol}_\omega(X)C_1 +2C^2_2{\rm Vol}_\omega(X)^2},
\end{equation}
which is possible since $\lambda_{0,0}, C_1 , C_2 , {\rm Vol}_\omega(X) >0$ are constants that depend only on $X$, $\omega$, and the $L^2$-orthonormal basis $\zeta_1, \zeta_2, \dots, \zeta_d$ for $H^0(X, \Omega_X^1)$. 

Thus, for any $0 < \ve < 1$, we define an open neighbourhood 
\begin{equation*}
	B_{\ve} := \left\{ \rho \in {\rm Pic}^0(X) \; \left| \; |1 - \rho |^2 < \frac{\ve \lambda_{0,0} {\rm Vol}_\omega(X)}{2 \ve  (1 - \ve ) + \ve {\rm Vol}_\omega(X)C_1 +2C^2_2{\rm Vol}_\omega(X)^2} \right\} \right.
\end{equation*}
around the identity in ${\rm Pic}^0(X)$ and take $\ve$ to be small enough so that $B_{\ve}$ is contained in the interior of the fundamental domain $D$. We then find that
\begin{align*}
\|\delbar_\rho u\|_{L^2}^2 &\geq |1-\rho|^2  \left( \frac{1}{2} \left( \frac{\lambda_{0,0}}{|1-\rho|^2} - \left( C_1 + \frac{2 C^2_2}{\ve }  {\rm Vol}_\omega(X) \right) \right)  \|\alpha\|_{L^2}^2 + \frac{1 - \ve }{{\rm Vol}_\omega(X)} \cdot\|\beta\|_{L^2}^2 \right) \\
&\geq \frac{1 - \ve }{{\rm Vol}_\omega(X)} |1-\rho|^2  \left(  \|\alpha\|_{L^2}^2 + \|\beta\|_{L^2}^2 \right) \\
&= \frac{1 - \ve }{{\rm Vol}_\omega(X)} |1-\rho|^2 \cdot \| u \|_{L^2}^2
\end{align*}
holds for all $\rho \in B_{\ve} \setminus \{ 1 \}$ and all $u \in A^{0, 0} (X)$, as required. 

For each $\rho \in \overline{D} \setminus \bm{c} (B_{\ve})$, let us consider 
\[
K_\rho = \sup\left\{\left.\frac{\|u\|_{L^2}}{\|\delbar_\rho u\|_{L^2}}\right|u\in ({\rm Ker}\,\delbar_\rho)^\perp\setminus\{0\}\right\}, 
\]
which satisfies $K_\rho<\infty$; see \cite[Theorem 8.37]{GT}. Here we regard $\rho$ as an element of the closure $\overline{D}$ of $D\subset \mathbb{C}^d$ via $\bm{c}$ so that $\delbar_{\rho}$ is well-defined; recall $\delbar_{\rho}$ depends only on $\bm{c} (\rho) \in D$, rather than $\rho\in{\rm Pic}^0(X)$ itself. There may be distinct elements in $\overline{D} \setminus \bm{c} (B_\ve )$ that are represented by the same $\rho\in{\rm Pic}^0(X)$, due essentially to the branch of $\log \rho$ as in Remark \ref{rmfdeqbrlg}, but this does not matter as long as we are concerned with the boundedness of $K_\rho$. 
Note that 
\begin{equation}\label{eq:expression_of_K_rho}
K_\rho = \sup\left\{\left.\frac{\|u\|_{L^2}}{\|\delbar_\rho u\|_{L^2}}\right|u\in A^{0, 0} (X) \setminus\{0\}\right\}
\end{equation}
holds by Lemma \ref{lem:flat_H0=0}. 

As the required estimate has already been shown on $B_\ve$ and 
$F\mapsto \mathsf{d}(\mathbb{I}_X, F)$ is bounded from below on ${\rm Pic}^0(X)\setminus B_\ve$, it is sufficient for proving Theorem \ref{thm:main_1} for the general case $F\in {\rm Pic}^0(X)\setminus\{\mathbb{I}_X\}$ to show the boundedness of $\{K_\rho \mid \rho\in \overline{D} \setminus \bm{c} (B_\ve )\}$ 
from above, which follows from Proposition \ref{prop:usc} below and the compactness of $\overline{D} \setminus \bm{c} (B_\ve ) $. 

\begin{proposition}\label{prop:usc}
The function $\rho \mapsto K_{\rho}$ 
is upper semi-continuous on  $\overline{D} \setminus \bm{c} (B_\ve )$. In particular, $\{K_\rho  \mid \bm{c} (\rho) \in \overline{D} \setminus \bm{c} (B_\ve ) \}$ is bounded from above. 
\end{proposition}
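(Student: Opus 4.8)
The plan is to exploit the identification $K_\rho = \lambda_\rho^{-1/2}$, where $\lambda_\rho := \inf\{ \|\delbar_\rho u\|_{L^2}^2 : u \in A^{0,0}(X),\ \|u\|_{L^2}=1 \}$ is the bottom of the spectrum of the perturbed Laplacian $\Delta_\rho = \delbar_\rho^* \delbar_\rho$ acting on functions (recall that $\delbar_\rho^* u = 0$ for functions, and use \eqref{eq:expression_of_K_rho}). For every $\rho$ with $\bm{c}(\rho) \in \overline{D}\setminus\bm{c}(B_\ve)$ we have $\bm{c}(\rho)\neq 0$, so $F_\rho$ is holomorphically non-trivial, whence $\ker\delbar_\rho = 0$ by Lemma \ref{lem:flat_H0=0} and $\lambda_\rho > 0$ (this positivity is exactly the finiteness of $K_\rho$ already recorded above). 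Since $x \mapsto x^{-1/2}$ is continuous and strictly decreasing on $(0,\infty)$, the upper semi-continuity of $\rho \mapsto K_\rho$ follows once we prove that $\bm{c}\mapsto \lambda_{\bm{c}}$ is \emph{lower} semi-continuous. Here I regard $\lambda$ as a function on the compact set $\overline{D}\setminus\bm{c}(B_\ve) \subset \mathbb{C}^d$, which is legitimate because $\delbar_{\bm{c}} = \delbar + M_{\bm{c}}$ with $M_{\bm{c}} := \sum_{j=1}^d c_j\,\overline{\zeta_j}\wedge$ depends affinely, hence continuously, on $\bm{c}$, and $\|M_{\bm{c}}\|_{op}$ is bounded uniformly over $\overline{D}$.

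To prove lower semi-continuity, fix $\bm{c}_0$ and a sequence $\bm{c}_k \to \bm{c}_0$ realizing $\liminf_{\bm{c}\to\bm{c}_0}\lambda_{\bm{c}}$, and choose unit-norm minimizers $u_k$ with $\|\delbar_{\bm{c}_k} u_k\|_{L^2}^2 = \lambda_{\bm{c}_k}$; these exist because each $\Delta_{\bm{c}_k}$ is an elliptic self-adjoint operator on a compact manifold whose smallest eigenvalue is attained. The first step is a uniform $W^{1,2}$-bound on $\{u_k\}$. For a function the K\"ahler identity gives $\|du_k\|_{L^2}^2 = 2\|\delbar u_k\|_{L^2}^2$, and the triangle inequality yields $\|\delbar u_k\|_{L^2} \le \|\delbar_{\bm{c}_k} u_k\|_{L^2} + \|M_{\bm{c}_k}\|_{op} \le \sqrt{\lambda_{\bm{c}_k}} + \sup_{\bm{c}\in\overline{D}}\|M_{\bm{c}}\|_{op}$, which is bounded; together with $\|u_k\|_{L^2}=1$ this bounds $\|u_k\|_{W^{1,2}}$ uniformly.

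By Rellich--Kondrachov we pass to a subsequence with $u_k \to u_0$ strongly in $L^2$ and weakly in $W^{1,2}$; strong $L^2$-convergence forces $\|u_0\|_{L^2}=1$. The bounded operator $\delbar\colon W^{1,2}\to L^2$ preserves weak convergence, so $\delbar u_k \rightharpoonup \delbar u_0$ weakly in $L^2$, while $M_{\bm{c}_k} u_k \to M_{\bm{c}_0} u_0$ \emph{strongly} in $L^2$ (using $\|u_k - u_0\|_{L^2}\to 0$, the uniform bound on $\|M_{\bm{c}_k}\|_{op}$, and $\bm{c}_k\to\bm{c}_0$). Adding, $\delbar_{\bm{c}_k} u_k \rightharpoonup \delbar_{\bm{c}_0} u_0$ weakly in $L^2$. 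Weak lower semi-continuity of the $L^2$-norm gives $\|\delbar_{\bm{c}_0} u_0\|_{L^2}^2 \le \liminf_k \|\delbar_{\bm{c}_k} u_k\|_{L^2}^2 = \liminf_k \lambda_{\bm{c}_k}$, and the variational characterization of $\lambda_{\bm{c}_0}$ applied to the unit vector $u_0$ gives $\lambda_{\bm{c}_0} \le \|\delbar_{\bm{c}_0} u_0\|_{L^2}^2$. Combining these yields $\lambda_{\bm{c}_0} \le \liminf_{\bm{c}\to\bm{c}_0}\lambda_{\bm{c}}$, the desired lower semi-continuity. Consequently $K$ is upper semi-continuous, and an upper semi-continuous function on the compact set $\overline{D}\setminus\bm{c}(B_\ve)$ attains a finite maximum, giving the asserted boundedness of $\{K_\rho\}$.

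The step I expect to be the crux is the limit passage, since $\delbar u_k$ converges only weakly and one cannot take limits naively: the argument depends on cleanly separating the zeroth-order perturbation $M_{\bm{c}_k}u_k$, which converges strongly, from the principal term, and then invoking weak lower semi-continuity of the norm together with the Rayleigh quotient. The uniform $W^{1,2}$-bound, obtained here almost for free from the K\"ahler identity for functions, is what lets the compactness argument run without heavier elliptic machinery (for $(p,0)$-forms one would replace it by G\r{a}rding's inequality, which is the natural route for the analogue in the setting of Theorem \ref{thm:main_2}).
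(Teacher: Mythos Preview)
Your proof is correct but takes a genuinely different route from the paper. The paper argues entirely by elementary means: assuming for contradiction that $\limsup_{\rho'\to\rho} K_{\rho'} \ge K_\rho + \ve$, it picks a sequence $\rho_\nu \to \rho$ and near-optimizers $u_\nu$ with $\|u_\nu\|_{L^2}=1$, writes $\delbar_{\rho_\nu} u_\nu = \delbar_\rho u_\nu + \eta_\nu \wedge u_\nu$ with $\sup_X\|\eta_\nu\|_\omega \to 0$, and then combines the triangle inequality with the a priori bound $\|\delbar_\rho u_\nu\|_{L^2} \ge 1/K_\rho$ (from \eqref{eq:expression_of_K_rho}) to reach an arithmetic contradiction. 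No Sobolev spaces, weak limits, or Rellich compactness are invoked; the only input is that $K_\rho$ is finite at the limit point, which immediately furnishes the needed lower bound on $\|\delbar_\rho u_\nu\|$.

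Your approach instead identifies $K_\rho = \lambda_\rho^{-1/2}$ and proves lower semi-continuity of the first eigenvalue via a Rellich compactness and weak convergence argument. This is the more ``functional-analytic'' route: it is conceptually clean, and as you point out it adapts immediately to $(p,0)$-forms by replacing the K\"ahler identity with G{\aa}rding's inequality. The price is that you use more machinery (compact embedding, existence of minimizers via spectral theory, density of smooth functions in $W^{1,2}$ to justify the last step for the non-smooth limit $u_0$) where the paper needs none. Conversely, the paper's argument is shorter and stays entirely within the $C^\infty$ setting, but it is perhaps less illuminating about \emph{why} the eigenvalue varies semi-continuously: it exploits the specific structure of the problem rather than a general principle.
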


\begin{proof}
For a non-trivial ${\rm U}(1)$-representation $\rho$, we show the inequality 
\[
\limsup_{\rho'\to \rho}K_{\rho'}\leq K_{\rho}.
\]
Suppose for contradiction that there exits a positive number $\ve$ such that 
\[
\limsup_{\rho'\to \rho}K_{\rho'}\geq K_{\rho}+\ve
\]
holds. Then we can take a sequence $\{\rho_\nu\}$ of ${\rm U}(1)$-representations such that $\rho_\nu\to \rho$ with respect to $\mathsf{d}_0$ as $\nu\to \infty$ and that 
\[
K_{\rho_\nu}\geq K_{\rho}+\frac{\ve}{2}
\]
holds for any $\nu$. Thus, for any $\nu$, there exists an element $u_\nu\in A^{0, 0} (X)$ such that 
\[
\frac{\|u_\nu\|_{L^2}}{\|\delbar_{\rho_\nu}u_\nu\|_{L^2}}\geq K_\rho+\frac{\ve}{3} 
\]
holds. Without loss of generality, we may assume $\|u_\nu\|_{L^2}=1$. 

 We may assume that $\eta_\nu := \sum_{j=1}^d\left(c_j(\rho_\nu)-c_j(\rho)\right)\overline{\zeta_j}$ satisfies $\sup_{x\in X}\|\eta_\nu\|_\omega(x)\to 0$ as $\nu\to 0$.
Then, as 
$\delbar_{\rho_\nu} u_\nu = \delbar_\rho u_\nu + \eta_\nu\wedge u_\nu$, it follows from Lemma \ref{lmlccp} that 
\begin{align*}
1=\|u_\nu\|_{L^2}&\geq \left(K_\rho+\frac{\ve}{3}\right)\cdot \|\delbar_{\rho_\nu}u_\nu\|_{L^2}\\
&= \left(K_\rho+\frac{\ve}{3}\right)\cdot \|\delbar_\rho u_\nu + \eta_\nu\wedge u_\nu\|_{L^2}\\
&\geq \left(K_\rho+\frac{\ve}{3}\right)\cdot \left(\|\delbar_\rho u_\nu\|_{L^2} - \|\eta_\nu\wedge u_\nu\|_{L^2}\right)\\
&\geq \left(K_\rho+\frac{\ve}{3}\right)\cdot \left(\|\delbar_\rho u_\nu\|_{L^2} - \sup_{x\in X}\|\eta_\nu\|_\omega(x)\cdot \|u_\nu\|_{L^2}\right)\\
&= \left(K_\rho+\frac{\ve}{3}\right)\cdot \left(\|\delbar_\rho u_\nu\|_{L^2} -\sup_{x\in X}\|\eta_\nu\|_\omega(x)\right)
\end{align*}
holds. 
As the equation (\ref{eq:expression_of_K_rho}) implies 
\[
\|\delbar_\rho u_\nu\|_{L^2}\geq \frac{\|u_\nu\|_{L^2}}{K_\rho}=\frac{1}{K_\rho}, 
\]
one has that
\[
1\geq \left(1+\frac{\ve}{3}\cdot \frac{1}{K_\rho}\right)\cdot \left(1 - K_\rho\cdot \sup_{x\in X}\|\eta_\nu\|_\omega(x)\right)
\]
holds for any $\nu$, which leads to contradiction since $\sup_{x\in X}\|\eta_\nu\|_\omega(x)\to 0$ as $\nu\to 0$.

The upper semi-continuity established above proves the boundedness result since $\overline{D} \setminus \bm{c} (B_\ve )$ is compact in $\mathbb{C}^d$ with respect to $\mathsf{d}$. 
\end{proof}

As pointed out above, the proof of Proposition \ref{prop:usc} completes the proof of the statements claimed in Theorem \ref{thm:main_1} when $F\in {\rm Pic}^0(X)\setminus\{\mathbb{I}_X\}$. 

\begin{remark}\label{rmk:final_rmk_thm_1_1_i}
For a flat line bundle $F$ on $X$ and a flat metric $h$, 
denote by $K(F)$ the minimum of all the positive constants $M$ such that the condition {\bf ($L^2$-estimate)$_M$} holds for $(X, F, \omega, h)$, i.e.~$(K(F))^2$ is the minimum positive eigenvalue of the Laplace operator $\delbar^*_F\delbar_F$ acting on $A^{0, 0}(X, F)$. 
As the solution of the $\delbar$-equation with minimum $L^2$-norm must be perpendicular to $\mathbb{H}^{p, q}$, it follows that $K_\rho=K({\rm alb}^*F_\rho)$ holds for each $\rho\in{\rm Pic}^0(X)$ (recall the argument that precedes (\ref{eq:main_ineq}) in \S \ref{section:proofmr_first_sub}), i.e.~$K_\rho$ does not depend on the choice of the branch of $\log \rho$ (although the operator $\delbar_\rho$ appears in the definition of $K_\rho$), which means that the function $\rho\mapsto K_\rho$ is well-defined on ${\rm Pic}^0(X)$. 
Proposition \ref{prop:usc} implies that this function is upper semi-continuous on ${\rm Pic}^0(X)\setminus \{1\}$. 
\end{remark}

Finally let us show the assertion of Theorem \ref{thm:main_1} when $F\in {\rm Pic}^t(X)$ for some $t\in G\setminus\{0\}$ (it is sufficient to show by fixing $t$ since $G$ is a finite group by Lemma \ref{lem:kahler_pic0_p_relation}). 
Again, as $F\mapsto \mathsf{d}(\mathbb{I}_X, F)$ is bounded from below on ${\rm Pic}^t(X)$, it is sufficient to show that 
\begin{equation}\label{eq:bddness_of_supK(F)_on_Pict}
\sup\{K(F)\mid F\in {\rm Pic}^t(X)\} < \infty
\end{equation}
holds, where $K(F)$ is as in Remark \ref{rmk:final_rmk_thm_1_1_i}, i.e. 
\[
K(F) := \sup\left\{\left.\frac{\|f\|_{h}}{\|\delbar_Ff\|_{h, \omega}}\right|f\in\left({\rm Ker}\,\delbar_F\right)^\perp\setminus\{0\}\right\}, 
\]
where $h$ is a flat metric on $F$ and $\|\bullet\|_h$ ($\|\bullet\|_{h, \omega}$) denotes the $L^2$ norm with respect to $h$ (and $\omega$). 
Note that 
\begin{equation}\label{eq:expression_of_K_rho2}
K(F) = \sup\left\{\left.\frac{\|f\|_{h}}{\|\delbar_Ff\|_{h, \omega}}\right|f\in A^{0, 0} (X, F)\setminus\{0\}\right\}
\end{equation}
holds by Lemma \ref{lem:flat_H0=0}. 
As ${\rm Pic}^t(X)$ is compact, 
(\ref{eq:bddness_of_supK(F)_on_Pict}) follows from Proposition \ref{prop:usc2} below.

\begin{proposition}\label{prop:usc2}
The function $F \mapsto K(F)$ 
is upper semi-continuous on ${\rm Pic}^t(X)$. In particular, (\ref{eq:bddness_of_supK(F)_on_Pict}) holds. 
\end{proposition}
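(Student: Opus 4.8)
The plan is to reduce the statement to a twisted version of Proposition \ref{prop:usc}, working over a fixed bundle rather than over $\mathbb{I}_X$. Fix once and for all a representative flat line bundle $E \in {\rm Pic}^t(X)$ together with a flat metric $h_E$; unlike the case $t=0$, the bundle $E$ is holomorphically (indeed $C^\infty$-) non-trivial, so it cannot be reduced to $\mathbb{I}_X$. Any $F \in {\rm Pic}^t(X)$ can nonetheless be written as $F = E \otimes L_\rho$ with $L_\rho = {\rm alb}^* F_\rho \in {\rm Pic}^0(X)$, and the global unit-length section $\sigma_{X,\rho}$ of $L_\rho$ from Lemma \ref{lmsmisolc} yields a $C^\infty$-bundle isometry $\mathrm{id}_E \otimes f_\rho \colon F = E \otimes L_\rho \isom E$ with respect to $h_E \otimes h_\rho$ and $h_E$. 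First I would check, exactly as in Proposition \ref{ppsmisolc}, that conjugating $\delbar_F$ by this isometry produces the $E$-twisted perturbed operator
\begin{equation*}
	\delbar_{E, \rho} := \delbar_E + \sum_{j=1}^d c_j(\rho) \cdot \overline{\zeta_j} \wedge \quad \colon \quad A^{0,0}(X, E) \to A^{0,1}(X, E),
\end{equation*}
so that, since the identification is an isometry for the $L^2$-norms,
\begin{equation*}
	K(F) = \sup\left\{ \left. \frac{\|u\|_{L^2, h_E}}{\|\delbar_{E, \rho} u\|_{L^2, h_E, \omega}} \; \right| \; u \in A^{0,0}(X, E) \setminus \{0\} \right\}.
\end{equation*}
This realises the whole family $\{\delbar_F\}_{F \in {\rm Pic}^t(X)}$ as operators on the single fixed space $A^{0,0}(X, E)$, differing only by the benign perturbation $\sum_j c_j(\rho) \overline{\zeta_j} \wedge$.

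The crucial simplification compared with Proposition \ref{prop:usc} is that every $F \in {\rm Pic}^t(X)$ is holomorphically non-trivial (its Chern class is $t \neq 0$), so $H^0(X, F) = 0$ and hence $\ker \delbar_{E,\rho} = 0$ for all $\rho$ by Lemma \ref{lem:flat_H0=0}. Consequently $\delbar_{E,\rho}$ is an injective elliptic operator with closed range, so $K(F) < \infty$ at every point \cite[Theorem 8.37]{GT}, and the supremum above may indeed be taken over all of $A^{0,0}(X,E) \setminus \{0\}$; in particular no neighbourhood needs to be excluded, in contrast to the analysis on ${\rm Pic}^0(X)$. It then remains to prove upper semi-continuity on the compact torus ${\rm Pic}^t(X)$, after which (\ref{eq:bddness_of_supK(F)_on_Pict}) follows immediately by compactness.

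For upper semi-continuity I would repeat the argument of Proposition \ref{prop:usc} essentially verbatim. Fix $F = E \otimes L_\rho$ and choose a local branch of $\log \rho'$ (equivalently a fundamental domain of $\mathbb{C}^d/\Lambda_{\bm{c}}$ whose interior contains $\bm{c}(\rho)$) so that $c_j(\rho')$ depends continuously on $\rho'$ near $\rho$; note that $K(F)$ itself is independent of this branch, as in Remark \ref{rmk:final_rmk_thm_1_1_i}. Arguing by contradiction, one extracts a sequence $\rho_\nu \to \rho$ and unit-norm $u_\nu \in A^{0,0}(X,E)$ with $\|u_\nu\|_{L^2}/\|\delbar_{E,\rho_\nu} u_\nu\|_{L^2} \ge K(F) + \ve/3$; writing $\delbar_{E,\rho_\nu} u_\nu = \delbar_{E,\rho} u_\nu + \eta_\nu \wedge u_\nu$ with $\eta_\nu := \sum_j (c_j(\rho_\nu) - c_j(\rho))\overline{\zeta_j}$, one has $\sup_{x \in X} \|\eta_\nu\|_\omega(x) \to 0$, so Lemma \ref{lmlccp} (which tensors trivially with the line-bundle factor $E$) together with the lower bound $\|\delbar_{E,\rho} u_\nu\|_{L^2} \ge 1/K(F)$ yields the same contradiction as before. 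The only genuinely new point is the reduction in the first paragraph, transporting $\delbar_F$ to the fixed bundle $E$ rather than to $\mathbb{I}_X$; this is where the hypothesis $t \neq 0$ forces us to keep $E$ in the picture, and it is the step I expect to require the most care, although it is conceptually routine. Once it is in place, the estimates of Lemma \ref{lmlccp} and the contradiction argument carry over word for word, and compactness of ${\rm Pic}^t(X)$ finishes the proof.
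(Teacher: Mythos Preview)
Your proposal is correct and follows essentially the same approach as the paper: both reduce to a perturbed $\delbar$-operator on a fixed flat bundle in ${\rm Pic}^t(X)$ and then rerun the contradiction argument of Proposition \ref{prop:usc}. The only cosmetic difference is that the paper takes the base bundle to be the point $F$ itself (so the perturbation parameter $\rho_\nu$ tends to the trivial representation), whereas you fix a single $E$ once and for all and let $\rho_\nu \to \rho$; this is the same computation up to a translation in ${\rm Pic}^0(X)$.
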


\begin{proof}
For an element $F\in {\rm Pic}^t(X)$, we show the inequality 
\[
\limsup_{F'\to F}K(F')\leq K(F)
\]
by contradiction. Assume there exits a positive number $\ve$ such that 
\[
\limsup_{F'\to F}K(F')\geq K(F)+\ve
\]
holds. 
Then one can take a sequence $\{\rho_\nu\}$ of ${\rm U}(1)$-representations such that $\rho_\nu\to 1$ as $\nu\to \infty$ and that 
\[
K(F\otimes {\rm alb}^*F_{\rho_\nu})\geq K(F)+\frac{\ve}{2}
\]
holds for any $\nu$, since $F'\otimes F^{-1}\in {\rm Pic}^0(X)$ for any $F'\in {\rm Pic}^t(X)$. 
In what follows, we let $F_\nu:=F\otimes {\rm alb}^*F_{\rho_\nu}$. 

Here we note that, by considering $u:=f/\sigma_{{\rm alb}^*F_{\rho_\nu}, \rho_\nu}\in A^{0, 0}(X, F)$ for each $f\in A^{0, 0}(X, F_\nu)$, we can deduce from (\ref{eq:expression_of_K_rho2}) that 
\[
K(F_\nu) = \sup\left\{\left.\frac{\|u\|_{h}}{\|\delbar_Fu+\eta_\nu\wedge u\|_{h, \omega}}\right|u\in A^{0, 0} (X, F)\setminus\{0\}\right\}, 
\]
where $\eta_\nu := \sum_{j=1}^dc_j(\rho_\nu)\overline{\zeta_j}$, since 
\begin{align*}
\delbar_{F_\nu} (u \sigma_{{\rm alb}^*F_{\rho_\nu}})
&=(\delbar_Fu)\wedge \sigma_{{\rm alb}^*F_{\rho_\nu}}+u\wedge (\delbar_{{\rm alb}^*F_{\rho_\nu}}\sigma_{{\rm alb}^*F_{\rho_\nu}})\\
&=\left(\delbar_Fu + u\wedge \frac{\delbar_{{\rm alb}^*F_{\rho_\nu}}\sigma_{{\rm alb}^*F_{\rho_\nu}}}{\sigma_{{\rm alb}^*F_{\rho_\nu}}} \right)\cdot \sigma_{{\rm alb}^*F_{\rho_\nu}}\\
&=\left(\delbar_Fu + u\wedge \sum_{j=1}^d c_j(\rho_\nu)\cdot \overline{\zeta_j} \right)\otimes \sigma_{{\rm alb}^*F_{\rho_\nu}}
\end{align*}
holds by (\ref{eqprdfpdbop}). 
Note also that we may assume that $\sup_{x\in X}\|\eta_\nu\|_\omega(x)\to 0$ as $\nu\to 0$.

Therefore, the rest of the proof can be done in the same manner as that of Proposition \ref{prop:usc}, by simply replacing $\rho$ with $1$, $K_\rho$ with $K(F)$, $\delbar_\rho$ with $\delbar_F$, and $\delbar_{\rho_\nu}$ with $\delbar_F+\eta_\nu\wedge$. Note that the inequality 
\[
\|\eta_\nu\wedge u_\nu\|_{h, \omega}\leq \sup_{x\in X}\|\eta_\nu\|_\omega(x)\cdot \|u_\nu\|_{h}
\]
for $u_\nu\in A^{0, 0}(X, F)$, which is needed in the proof, can be shown by the same argument as in the proof of Lemma \ref{lmlccp}.
\end{proof}

\begin{remark}
One can show the boundedness (\ref{eq:bddness_of_supK(F)_on_Pict}) also by constructing suitable finite coverings $\pi\colon \widetilde{X}\to X$ such that $\pi^*({\rm Pic}^t(X))\subset {\rm Pic}^0(\widetilde{X})$ and applying Proposition \ref{prop:usc} to ${\rm Pic}^0(\widetilde{X})$. 
See Appendix for the details. 
\end{remark}

As is mentioned above, the proof of Proposition \ref{prop:usc2} completes the proof of all the statements claimed in Theorem \ref{thm:main_1}. 

\subsection{Proof of Theorem \ref{thm:main_2}}

First note that Lemma \ref{lmlccp} implies
\begin{align*}
\|\delbar_\rho\beta\|^2_\omega &= \left\| \sum_{j=1}^d c_j(\rho)\cdot \overline{\zeta_j} \wedge \beta\right\|^2_\omega \\
&= \sum_{j,l=1}^d c_j (\rho) \overline{c_l (\rho)} \langle  \overline{\zeta_j} \wedge \beta , \overline{\zeta_l} \wedge \beta \rangle_{\omega} \\
&=\sum_{j,l=1}^d c_j (\rho) \overline{c_l (\rho)}  \langle  \zeta_l  ,  \zeta_j \rangle_{\omega} \| \beta \|^2_\omega .
\end{align*}
Recalling that $\zeta_1, \zeta_2, \dots, \zeta_d$ are parallel differential forms with respect to $\omega$, we find that $\langle  \zeta_l  ,  \zeta_j \rangle_{\omega}$ is a constant over $X$, i.e.~for any $x \in X$ we have
\begin{equation*}
	\langle  \zeta_l  ,  \zeta_j \rangle_{\omega} (x ) = \frac{1}{{\rm Vol}_\omega(X)}( \zeta_l , \zeta_j )_{L^2} = \frac{1}{{\rm Vol}_\omega(X)} \delta_{lj}
\end{equation*}
where the right hand side is the Kronecker delta. We thus get
\begin{equation} \label{eqiilbbeta}
	\|\delbar_\rho\beta\|^2_{L^2} = \frac{1}{{\rm Vol}_\omega(X)} |1 - \rho |^2 \cdot \| \beta \|^2_{L^2}.
\end{equation}

Note again that Proposition \ref{ppelcpdedr} implies
\begin{equation*}
	\| \delbar_\rho \alpha\|_{L^2}^2 = (\alpha, \Delta_\rho\alpha)_{L^2} \ge \frac{1}{2} \left( \lambda_{p,0} - 4  C_X (\rho)^2 \right) \Vert \alpha \Vert_{L^2}^2
\end{equation*}
holds for $C_X(\rho) = \sum_{j=1}^d |c_j (\rho)| \sup_{x \in X} \Vert \zeta_j \Vert_{\omega} (x)$, where $\lambda_{0, 0} >0$ is the smallest non-zero eigenvalue of $\Delta$ acting on $(p,0)$-forms. Noting $\sum_{j=1}^d |c_j (\rho)| \le \sqrt{d} |1 - \rho |$, we get
\begin{equation} \label{eqiilbalpha}
	\| \delbar_\rho \alpha\|_{L^2}^2 \ge \frac{1}{2} \left( \lambda_{p,0} - C_3  |1 - \rho |^2 \right) \Vert \alpha \Vert_{L^2}^2
\end{equation}
where we set 
\begin{equation*}
C_3 =  C_3(X, \omega, \{ \zeta_j \}_{j=1}^d):= 4 d  \max_{l=1, \dots, n} \sup_{x \in X} \Vert \zeta_l \Vert^2_{\omega} (x) >0 .
\end{equation*}

As in Proposition \ref{prop:hashimoto2} that we proved in the previous section, evaluation of the cross term is of crucial importance. We have a stronger result now that we assume that all holomorphic $p$-forms are parallel.

\begin{proposition}\label{prop_hashimoto1}
	Suppose that all holomorphic differential forms on $X$ are parallel with respect to the K\"ahler metric $\omega$. 
Then we have
	\begin{equation*}
		(\delbar_{\rho} \alpha , \delbar_{\rho} \beta )_{L^2} = 0
	\end{equation*}
	for any $\beta \in H^0 (X , \Omega_X^{p})$ and any $\alpha \in A^{p,0} (X)$ that is $L^2$-orthogonal to holomorphic $p$-forms.
\end{proposition}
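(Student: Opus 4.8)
The plan is to follow the template of Proposition \ref{prop:hashimoto2}, expanding the inner product with $\delbar_\rho = \delbar + \sum_{j=1}^d c_j(\rho) \overline{\zeta_j} \wedge$ and showing that the two resulting pieces vanish separately, now exploiting the stronger parallel hypothesis. Since $\beta$ is holomorphic we have $\delbar \beta = 0$, so $\delbar_\rho \beta = \sum_{j=1}^d c_j(\rho) \overline{\zeta_j} \wedge \beta$, and writing $\delbar_\rho \alpha = \delbar \alpha + \sum_{l=1}^d c_l(\rho) \overline{\zeta_l} \wedge \alpha$ gives, after passing to the adjoint in the first piece,
\begin{equation*}
(\delbar_\rho \alpha, \delbar_\rho \beta)_{L^2} = \left(\alpha, \delbar^*\left(\sum_{j=1}^d c_j(\rho) \overline{\zeta_j} \wedge \beta\right)\right)_{L^2} + \sum_{j,l=1}^d c_l(\rho) \overline{c_j(\rho)} \left(\overline{\zeta_l} \wedge \alpha, \overline{\zeta_j} \wedge \beta\right)_{L^2}.
\end{equation*}

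The key step, and the only place where the hypothesis that all holomorphic forms are parallel is really needed, is to show $\delbar^*(\overline{\zeta_j} \wedge \beta) = 0$ for each $j$, which kills the first term. Both $\zeta_j$ and $\beta$ are parallel by hypothesis, and since $\nabla$ is a derivation the wedge $\overline{\zeta_j} \wedge \beta$ is again parallel; a parallel form is $d$-harmonic, hence $\delbar$-harmonic by the K\"ahler identity $\Delta_d = 2\Delta$, and in particular $\delbar^*(\overline{\zeta_j} \wedge \beta) = 0$. This is precisely the generalisation from $\beta$ constant, where one only needs $\delbar^* \overline{\zeta_j} = 0$ as in Proposition \ref{prop:hashimoto2}, to $\beta$ a parallel $p$-form, and it is what allows the cross term to vanish outright rather than merely be bounded.

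For the remaining double sum I would apply the pointwise product formula for $(p,0)$-forms in Lemma \ref{lmlccp}, which gives $\langle \overline{\zeta_l} \wedge \alpha, \overline{\zeta_j} \wedge \beta \rangle_\omega = \langle \overline{\zeta_l}, \overline{\zeta_j} \rangle_\omega \langle \alpha, \beta \rangle_\omega$ at every point of $X$. Since the $\zeta_j$ are parallel, the scalar $\langle \overline{\zeta_l}, \overline{\zeta_j} \rangle_\omega$ is constant on $X$ and factors out of the integral, so that $(\overline{\zeta_l} \wedge \alpha, \overline{\zeta_j} \wedge \beta)_{L^2}$ is a constant multiple of $\int_X \langle \alpha, \beta \rangle_\omega \, \omega^n/n! = (\alpha, \beta)_{L^2}$, which vanishes because $\alpha$ is $L^2$-orthogonal to the holomorphic $p$-form $\beta$. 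Hence both terms are zero and the proof is complete. I expect the verification of $\delbar^*(\overline{\zeta_j} \wedge \beta) = 0$ to be the main (indeed essentially the only) nontrivial point; the second term is a routine combination of Lemma \ref{lmlccp} with the orthogonality assumption.
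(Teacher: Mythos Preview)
Your proof is correct and follows essentially the same approach as the paper: the same decomposition of $(\delbar_\rho\alpha,\delbar_\rho\beta)_{L^2}$ into the $\delbar^*$-term and the double sum, the same use of Lemma \ref{lmlccp} and constancy of $\langle\zeta_j,\zeta_l\rangle_\omega$ to reduce the latter to a multiple of $(\alpha,\beta)_{L^2}=0$. The only difference is that the paper verifies $\delbar^*(\overline{\zeta_j}\wedge\beta)=0$ by a direct computation in holomorphic normal coordinates (writing out the coefficients and using that their first derivatives vanish at the centre), whereas your ``parallel $\Rightarrow$ $d$-harmonic $\Rightarrow$ $\delbar$-harmonic'' argument reaches the same conclusion more conceptually.
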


\begin{proof}
	First note that we have $\delbar \beta = \delbar^* \beta = 0$ since $\beta$ is $\delbar$-harmonic, which implies
	\begin{align*}
		(\delbar_{\rho} \alpha , \delbar_{\rho} \beta )_{L^2} &= \left( \delbar_{\rho} \alpha ,  \sum_{j=1}^d c_j (\rho) \overline{\zeta_j}  \wedge \beta \right)_{L^2} \\
		&= \left( \alpha , \delbar^* \left( \sum_{j=1}^d c_j (\rho) \overline{\zeta_j}  \wedge \beta \right) \right)_{L^2} + \sum_{j,l=1}^d c_l (\rho) \overline{c_j (\rho)} \left( \overline{\zeta_l}  \wedge \alpha , \overline{\zeta_j}  \wedge \beta \right)_{L^2} .
	\end{align*}
	
	At each point $x \in X$, we pick a normal holomorphic coordinate system $(z_1 , \dots , z_n)$ which defines an $\omega$-orthonormal basis $d z_1 , \dots , d z_n$ for the holomorphic cotangent space $T^*_{X,x}$ and write $\zeta_j$ and $\beta$ locally in terms of the chosen basis. A crucially important fact is that we can write
	\begin{equation*}
		\zeta_j = \sum_{m=1}^n f_{j,m} dz_m , \quad \beta = \sum_{1 \le i_1 , \dots , i_p \le n} \beta_{i_1, \dots , i_p} dz_{i_1} \wedge \cdots dz_{i_p}
	\end{equation*}
	for smooth $\mathbb{C}$-valued functions $f_{j,m}$ and $\beta_{i_1, \dots , i_p}$ defined locally around $x \in X$ such that
	\begin{equation} \label{eqprll}
		\frac{\partial f_{j,m}}{\partial z_k} (x) = \frac{\partial f_{j,m}}{\partial \overline{z}_k} (x) = \frac{\partial \beta_{i_1, \dots , i_p}}{\partial z_k} (x) = \frac{\partial \beta_{i_1, \dots , i_p}}{\partial \overline{z}_k} (x) = 0
	\end{equation}
	holds at $x \in X$ and for any $k=1 , \dots , n$, since $\zeta_j$, $\beta$ are holomorphic and hence parallel by the hypothesis; note that the covariant derivative agrees with the ordinary coordinate derivatives at $x$ since we took the normal holomorphic coordinates. Using the multi-index notation $I := \{ i_1 , \dots , i_p \}$, we thus have
	\begin{align*}
		\ast (\overline{\zeta_j} \wedge \beta ) &= \ast \sum_{m=1}^n \sum_I (-1)^p \overline{f_{j,m}} \beta_I  dz_I \wedge  d \overline{z_m} \\
		&=\sum_{m=1}^n \sum_I (-1)^{\bullet} \overline{f_{j,m}} \beta_{I}  dz_{\{1, \dots , n \} \setminus \{ m \}} \wedge  d \overline{z_{\{ 1 , \dots , n\} \setminus I}},
	\end{align*}
	hence
	\begin{align*}
		\left. \delbar^* \left( \sum_{j=1}^n c_j (\rho) \overline{\zeta_j}  \wedge \beta \right) \right|_x &= - \left. \sum_{j=1}^n c_j (\rho) \ast \partial \ast (\overline{\zeta_j} \wedge \beta ) \right|_x \\
		&= -  \sum_{j=1}^n c_j (\rho) \sum_{m=1}^n \sum_I (-1)^{\bullet} \left. \ast\partial (\overline{f}_{j,m} \beta_{I} )\right|_x \wedge dz_{\{1, \dots , n \} \setminus \{ m \}} \wedge  d \overline{z}_{\{ 1 , \dots , n\} \setminus I} \\
		&=0
	\end{align*}
	since
	\begin{equation*}
		\left. \partial (\overline{f}_{j,m} \beta_{I} )\right|_x = \left. \partial (\overline{f}_{j,m}  )\right|_x \beta_{I} (x) + \overline{f}_{j,m} (x) \left. \partial ( \beta_{I} )\right|_x = 0
	\end{equation*}
	by (\ref{eqprll}). Moreover, Lemma \ref{lmlccp} implies
	\begin{equation*}
		\langle \overline{\zeta_l}  \wedge \alpha , \overline{\zeta_j}  \wedge \beta \rangle_{\omega} (x) = \langle  \zeta_j ,  \zeta_l \rangle_{\omega} (x) \cdot \langle  \alpha ,  \beta \rangle_{\omega} (x).
	\end{equation*}
	Thus, combining the above argument, we get
	\begin{equation*}
		( \delbar_{\rho} \alpha , \delbar_{\rho} \beta )_{L^2} = \int_X \left( \sum_{j,l=1}^d c_l (\rho) \overline{c_j (\rho)} \langle  \zeta_j ,  \zeta_l \rangle_{\omega} \right) \langle  \alpha ,  \beta \rangle_{\omega}  \frac{\omega^n }{n!} .
	\end{equation*}
	Since $\zeta_1, \dots , \zeta_n$ are parallel with respect to $\omega$, its $\omega$-metric inner product $\langle  \zeta_j ,  \zeta_l \rangle_{\omega} (x)$ is a constant that does not depend on $x \in X$, and hence the summation inside the bracket is a constant over $X$. Thus we get, as required,
	\begin{equation*}
		(\delbar_{\rho} \alpha , \delbar_{\rho} \beta)_{L^2} = \mathrm{const.} ( \alpha ,  \beta )_{L^2} = 0
	\end{equation*}
	by the assumption that $\alpha$ is $L^2$-orthogonal to the holomorphic forms.
\end{proof}

Thus, from (\ref{eqiilbbeta}), (\ref{eqiilbalpha}), and Proposition \ref{prop_hashimoto1}, we get
\begin{align*}
\|\delbar_\rho u\|_{L^2}^2
&=\|\delbar_\rho \alpha + \delbar_\rho \beta\|_{L^2}^2\\
&=\|\delbar_\rho \alpha\|_{L^2}^2 + 2{\rm Re}\,(\delbar_\rho \alpha, \delbar_\rho \beta)_{L^2} + \|\delbar_\rho \beta\|_{L^2}^2\\
&\geq \frac{1}{2} \left( \lambda_{p,0} - C_3  |1 - \rho |^2 \right) \|\alpha\|_{L^2}^2 +  \frac{1}{{\rm Vol}_\omega(X)} |1-\rho|^2\cdot\|\beta\|_{L^2}^2 ,
\end{align*}
for any $u \in A^{p,0} (X)$, with $u = \alpha + \beta$, $\alpha\in (\mathbb{H}^{p, 0})^\perp$, $\beta\in \mathbb{H}^{p, 0}$.

We take $\rho$ to be sufficiently close to the identity so that
\begin{equation*}
	\frac{1}{2} \left( \frac{\lambda_{p,0}}{|1-\rho|^2} - C_3 \right) \ge  \frac{1}{{\rm Vol}_\omega(X)},
\end{equation*}
or equivalently
\begin{equation*}
	|1 - \rho |^2 \le \frac{\lambda_{p,0} {\rm Vol}_\omega(X)}{C_3{\rm Vol}_\omega(X) +2},
\end{equation*}
which is possible since $\lambda_{p,0}, C_3, {\rm Vol}_\omega(X) >0$ are constants that depend only on $X$, $\omega$, and the $L^2$-orthonormal basis $\zeta_1, \zeta_2, \dots, \zeta_d$ for $H^0(X, \Omega_X^1)$. Thus there exists an open neighbourhood $B$ around the identity in ${\rm Pic}^0(X)\cong \mathbb{C}^d/\Lambda_{\bf c}$, which is contained in the interior of the fundamental domain $D$, such that 
\begin{align*}
\|\delbar_\rho u\|_{L^2}^2 &\geq |1-\rho|^2  \left( \frac{1}{2} \left( \frac{\lambda_{p,0}}{|1-\rho|^2} - C_3 \right) \|\alpha\|_{L^2}^2 + \frac{1}{{\rm Vol}_\omega(X)} \cdot\|\beta\|_{L^2}^2 \right) \\
&\geq \frac{1}{{\rm Vol}_\omega(X)} |1-\rho|^2  \left(  \|\alpha\|_{L^2}^2 + \|\beta\|_{L^2}^2 \right) \\
&= \frac{1}{{\rm Vol}_\omega(X)} |1-\rho|^2 \cdot \| u \|_{L^2}^2
\end{align*}
holds for all $\rho \in B \setminus \{ 1 \}$ and all $u \in A^{p,0} (X)$, as required. This completes the proof of Theorem \ref{thm:main_2}.

\begin{remark}\label{rmk:prof_of_thm_main_2}
Note that the argument at the end of the proof of Theorem \ref{thm:main_1} is valid as long as the following condition holds: $H^{p, 0}(X, F)=0$ holds for any $F\in \mathcal{P}(X)\setminus \{\mathbb{I}_X\}$, since Proposition \ref{prop:usc} also holds for $(p, 0)$-forms under this condition, which can be shown by running the same argument as in the proof of this proposition just by replacing the equation (\ref{eq:expression_of_K_rho}) with 
\[
K_\rho = \sup\left\{\left.\frac{\|u\|_{L^2}}{\|\delbar_\rho u\|_{L^2}}\right|u\in A^{p, 0} (X) \setminus\{0\}\right\}. 
\]
Therefore, Theorem \ref{thm:main_2} remains true even after replacing $B$ with ${\rm Pic}^0(X)$, if $H^{p, 0}(X, F)=0$ holds for any $F\in {\rm Pic}^0(X)\setminus \{\mathbb{I}_X\}$. 
\end{remark}

\subsection{Proof of Corollary \ref{cor:main}}
Here we give an alternative proof of \cite[Lemma 4]{Ueda} when the manifold $X$ is K\"ahler (See also \cite[\S 8.3]{KoikeUehara} for an alternative proof of Ueda's lemma (with effective $K$), which the second author learned from Prof. Tetsuo Ueda). 

The assertion is clear if $F$ is holomorphically trivial. 
When $F\not=\mathbb{I}_X$, the corollary follows from 
Theorem \ref{thm:main_1}, Lemma \ref{lem:flat_H0=0}, and Lemma \ref{lem:CD-corresp}.



\section{Examples} \label{section:excex}

\subsection{Elliptic curves}

In this subsection, let us consider the case where $X$ is an elliptic curve $\mathbb{C}/\langle 1, \tau\rangle$, where $\tau$ is a complex number with ${\rm Im}\,\tau>0$. 
Note that, for any $F\in \mathcal{P}(X)\setminus\{\mathbb{I}_X\}$, any $v\in A^{0, 1}(X,F)$ admits a solution $u\in A^{0, 0}(X,F)$ such that $\delbar u=v$, since $H^{0, 1}(X, F)=0$ holds for any $F\in \mathcal{P}(X)\setminus\{\mathbb{I}_X\}$ by Lemma \ref{lem:flat_H0=0} and the Riemann--Roch theorem. 
In what follows, we denote by $z$ the standard coordinate of the universal covering $\mathbb{C}$ of $X$ and by $dz$ the non-trivial global holomorphic $1$-form on $X$ which comes from the $1$-form $dz$ on $\mathbb{C}$. 
Additionally, here we will use the Weierstrass $\sigma$-function 
\[
\sigma(z) := z\prod_{w\in \Lambda\setminus\{0\}}
\left(\left(1-\frac{z}{w}\right)\exp\left(\frac{z}{w}+\frac{z^2}{2w^2}\right)\right)
\]
and the Weierstrass $\zeta$-function
\[
\zeta(z) := \frac{1}{z} + \sum_{w\in \Lambda\setminus\{0\}}
\left(\frac{1}{z-w}+\frac{1}{w}+\frac{z}{w^2}\right)
\]
for the lattice $\Lambda:=\langle 1, \tau\rangle$. Recall the well-known formulae $\eta_1+\eta_2+\eta_3=0$, 
\[
\eta_3\omega_2-\eta_2\omega_3
=\eta_2\omega_1-\eta_1\omega_2
=\eta_1\omega_3-\eta_3\omega_1
=\frac{1}{2}\pi\sqrt{-1}, 
\]
and 
\[
\sigma(z+2\omega_j)=-e^{2\eta_j(z+\omega_j)}\sigma(z)
\]
for 
\[
\omega_1:=\frac{1}{2},\quad
\omega_2:=\frac{-1-\tau}{2}, \quad
\omega_3:=\frac{\tau}{2}
\]
and $\eta_j := \zeta(\omega_j)$ (See \cite[Chapter 23.2]{DLMF} for example). 

Suppose $F\in \mathcal{P}(X)\setminus\{\mathbb{I}_X\}$. 
Take an element $(p, q)\in [0, 1)^2\setminus\{(0, 0)\}$ such that 
\[
\rho(1)=e^{2\pi\sqrt{-1}p},\quad
\rho(\tau)=e^{2\pi\sqrt{-1}q}
\]
holds for the ${\rm U}(1)$-representation $\rho$ which corresponds to the flat line bundle $F$ via the correspondence in the proof of Proposition \ref{prop:alb}. 
Consider the meromorphic function $\widetilde{K}\colon \mathbb{C}^2\to \mathbb{P}^1$ defined by 
\[
\widetilde{K}(z, \xi) := e^{A(z-\xi)}\cdot \frac{\sigma(z-\xi+B)}{\sigma(z-\xi)}, 
\]
where we are letting $A:= -2(p\eta_3-q\eta_1)$ and $B:=p\tau-q$. 
As is easily shown by applying the formulae above, 
$\widetilde{K}$ induces an meromorphic section $K$ of the line bundle ${\rm Pr}_1^*F\otimes {\rm Pr}_2^*(F^{-1})$ on $X\times X$, where ${\rm Pr}_j\colon X\times X\to X$ denotes the $j$-th projection for $j=1, 2$. 
Using this $K$, one can concretely construct the solution of the $\delbar$-equation as follows.
\begin{proposition}\label{prop:kernel_sub}
Let $X=\mathbb{C}/\langle 1, \tau\rangle$, $F\in \mathcal{P}(X)\setminus\{\mathbb{I}_X\}$, and $(p, q)\in \mathbb{R}^2\setminus\{(0, 0)\}$ be as above. 
Then, for any $v\in A^{0, 1}(X,F)$, 
\[
u(z) := \frac{1}{\pi\sigma(p\tau-q)}\int_{\xi\in X}K(z, \xi)\cdot f(\xi)\,\frac{\sqrt{-1}}{2}d\xi\wedge d\overline{\xi}
\]
is an element of $A^{0, 0}(X,F)$ which 
satisfies $\delbar u=v$, where $f\in A^{0, 0}(X,F)$ is the element such that $v=fd\overline{z}$. 
\end{proposition}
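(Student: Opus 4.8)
The plan is to recognise $\tfrac{1}{\pi\sigma(p\tau-q)}K(z,\xi)$ as a twisted analogue of the Cauchy kernel $\tfrac{1}{\pi(z-\xi)}$ and thereby reduce $\delbar u=v$ to the classical Cauchy--Green formula. First I would check that $u$ is a well-defined section of $F$. By hypothesis $K$ is a section of ${\rm Pr}_1^*F\otimes {\rm Pr}_2^*(F^{-1})$, so for each fixed $z$ the product $K(z,\xi)f(\xi)$ is, in the variable $\xi$, an $F_z$-valued meromorphic function on $X$ (the bundle ${\rm Pr}_2^*(F^{-1})\otimes F$ restricting to $\mathcal{O}_X$ along $\{z\}\times X$), whose only singularity is a simple pole along $\xi=z$. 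Since $|z-\xi|^{-1}$ is locally integrable on $\rl^2$, the integral converges absolutely; and since $K(z,\xi)f(\xi)$ is a section of $F$ in the variable $z$, so is $u$.

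The key local input is the behaviour of $K$ along the diagonal. Writing $w=z-\xi$ and using that $\sigma$ has a simple zero at $0$ with $\sigma(w)=w+O(w^5)$, together with $e^{Aw}\sigma(w+B)=\sigma(B)+O(w)$ where $B=p\tau-q$, I obtain
\[
	\widetilde{K}(z,\xi)=\frac{\sigma(B)}{z-\xi}+g(z,\xi),
\]
with $g$ holomorphic in $z$ near $\xi$. Because $(p,q)\in[0,1)^2\setminus\{(0,0)\}$ one checks directly that $B\notin\Lambda$, so $\sigma(B)\neq 0$ and the normalisation $\tfrac{1}{\pi\sigma(B)}$ is exactly what is needed to turn the principal part of $\tfrac{1}{\pi\sigma(B)}K$ into the standard Cauchy kernel $\tfrac{1}{\pi(z-\xi)}$.

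To prove $\delbar u=v$, which is local in $z$, I would fix $z_0\in X$ and a small disc $D$ around it (lifted to $\cx$ and chosen so that no two $\Lambda$-translates of a point of $D$ both lie in $D$), and split $u=u_1+u_2$ according to whether $\xi\in D$ or $\xi\in X\setminus D$. For $z$ in a slightly smaller disc, the kernel $K(z,\xi)$ with $\xi\notin D$ is holomorphic in $z$, so differentiating under the integral sign gives $\delbar u_2=0$. For $u_1$ I substitute the splitting above: the $g$-term contributes a function holomorphic in $z$, hence annihilated by $\delbar$, while the principal part gives the Cauchy transform $\tfrac{1}{\pi}\int_D \tfrac{f(\xi)}{z-\xi}\,\tfrac{\ai}{2}d\xi\wedge d\overline{\xi}$, to which I apply the distributional identity $\tfrac{\del}{\del\overline z}\tfrac{1}{\pi z}=\delta_0$ (equivalently, the Cauchy--Pompeiu formula). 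This yields $\delbar u_1=f\,d\overline z$ on the interior of $D$, so $\delbar u=v$ near $z_0$; as $z_0$ is arbitrary, $\delbar u=v$ on all of $X$. Smoothness of $u$ then follows from elliptic regularity for $\delbar$, so that $u\in A^{0,0}(X,F)$.

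I expect the main obstacle to be the careful justification of this localisation and of moving $\delbar$ past the singular integral: one must control the simple pole along the diagonal, confirm that the off-diagonal piece $u_2$ and the remainder term involving $g$ are genuinely holomorphic in $z$ (so that they drop out under $\delbar$), and invoke the generalised Cauchy formula in the precise form adapted to the area measure $\tfrac{\ai}{2}d\xi\wedge d\overline\xi$. By contrast, the quasi-periodicity ensuring that $u$ descends to a global section of $F$ is comparatively routine, resting on the transformation law of $K$ already established through the Weierstrass functional equations recorded before the statement.
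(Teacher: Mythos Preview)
Your argument is correct, and it takes a genuinely different route from the paper. You localise near a point, isolate the principal part $\sigma(B)/(z-\xi)$ of the kernel, and invoke the classical Cauchy--Pompeiu identity $\partial_{\overline{z}}\bigl(1/(\pi z)\bigr)=\delta_0$ directly; the off-diagonal piece and the holomorphic remainder $g$ are disposed of by differentiation under the integral sign. The paper instead argues globally in the language of currents: it observes $K(z,\xi)=k(z-\xi)$ for a meromorphic section $k$ of $F$, applies the Poincar\'e--Lelong formula to obtain $\delbar\bigl(k\,dz/(2\pi\ai)\bigr)=\sigma(B)\,\delta_0$, and then proves a Koppelman-type identity by pairing against an arbitrary test form $\rho(z)\,dz\in A^{1,0}(X,F^{-1})$ and using Stokes' theorem on $X\times X$. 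Your approach is more elementary and self-contained, requiring nothing beyond the local Cauchy transform, whereas the paper's approach situates the result within the Bochner--Martinelli/Koppelman framework of \cite[\S 3.D]{Demailly_book}, which is the natural template for higher-dimensional generalisations. Both arguments ultimately rest on the same distributional fact about $1/z$; the difference is organisational.
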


\begin{proof}
Again by applying the formulae above, it follows that the meromorphic function 
$z\mapsto e^{Az}\cdot \frac{\sigma(z+B)}{\sigma(z)}$ on $\mathbb{C}$ induces a meromorphic section $k$ of $F$ on $X$. Note that $K(z, \xi)=k(z-\xi)$ holds. Note also that 
if follows from Poincar\'e--Lelong formula that
\[
\delbar \left(\frac{k(z)}{2\pi\sqrt{-1}}\,dz\right)=\sigma(B)\cdot \delta_0(z)
\]
holds as currents, where $\delta_0$ denotes the delta measure with support $0$. 
By using this equation (and using $kd\overline{z}/(2\pi\sqrt{-1}\sigma(B))$ instead of Bochner--Martinelli kernel), one can run the same argument as in \cite[\S 3. D.]{Demailly_book} to obtain a variant of Koppelman formula
\[
\int_{(z, \xi)\in X\times X}\delbar\left(K(z, \xi)(dz-d\xi)\right)\wedge \left(G(\xi)d\overline{\xi})\wedge (H(z)dz\right)
=-4\pi\sigma(B)\int_{z\in X}G(z)\cdot H(z)\,\frac{\sqrt{-1}}{2}dz\wedge d\overline{z}
\]
for any $G\in A^{0, 0}(X,F)$ and $H\in A^{0, 0}(F^{-1})$. 
From this formula, for arbitrary $\rho(z)dz\in A^{1, 0}(F^{-1})$, one can calculate the value of 
\[
I:=\int_{X\times X}d\left((K(z, \xi)\cdot(dz-d\xi))\wedge (f(\xi)d\overline{\xi})\wedge (\rho(z)dz)\right)
\]
as follows: 
\begin{align*}
I=& 
\int_{X\times X}\delbar(K(z, \xi)\cdot(dz-d\xi))\wedge (f(\xi)d\overline{\xi})\wedge (\rho(z)dz)\\
&+\int_{X\times X}(K(z, \xi)\cdot(dz-d\xi))\wedge (f(\xi)d\overline{\xi})\wedge \delbar(\rho(z)dz)\\
=& 
-4\pi\sigma(B)\int_{z\in X}f(z)\cdot \rho(z)\,\frac{\sqrt{-1}}{2}dz\wedge d\overline{z}\\
&+\int_{X\times X}(K(z, \xi)\cdot(dz-d\xi))\wedge (f(\xi)d\overline{\xi})\wedge \delbar(\rho(z)dz)\\
=& 
-4\pi\sigma(B)\int_{z\in X}f(z)\cdot \rho(z)\,\frac{\sqrt{-1}}{2}dz\wedge d\overline{z}-2\sqrt{-1}\int_X \delbar(u(z))\wedge \rho(z)dz\\
=& 
4\pi\sigma(B)\cdot \frac{\sqrt{-1}}{2}\int_{z\in X}(f(z)d\overline{z})\wedge \rho(z)dz-2\sqrt{-1}\int_X \delbar(u(z))\wedge \rho(z)dz.  
\end{align*}
On the other hand, as
\[
(K(z, \xi)\cdot(dz-d\xi))\wedge (f(\xi)d\overline{\xi})\wedge (\rho(z)dz)
=\left(-K(z, \xi)f(\xi)\rho(z)\right) d\xi\wedge d\overline{\xi}\wedge dz
\]
holds and $-K(z, \xi)f(\xi)\rho(z)$ is a function on $X\times X$, it follows from Stokes theorem that $I=0$. 
Therefore the assertion follows, since $\rho(z)dz\in A^{1, 0}(F^{-1})$ is arbitrary. 
\end{proof}

For $u$ and $v$ as in Proposition \ref{prop:kernel_sub}, it follows from Young's convolution inequality that 
\[
\|u\|_{L^2} \leq \left\|\frac{k}{\pi\sigma(p\tau-q)}\right\|_{L^1}\cdot \|v\|_{L^2}, 
\]
where $k$ is the meromorphic section of $F$ on $X$ as in the proof. 
Note that 
\[
\left\|\frac{k}{\pi\sigma(p\tau-q)}\right\|_{L^1}
=\frac{1}{\pi}\int_D\left|e^{-2(p\eta_3-q\eta_1)z}\cdot \frac{\sigma(z+p\tau-q)}{\sigma(p\tau-q)\sigma(z)}\right|\,\frac{\sqrt{-1}}{2}dz\wedge \overline{z}
\]
by definition of $k$, where $D$ is a fundamental domain of the covering map $\mathbb{C}\to X$. 
As is followed from a simple calculation that the function 
\[
p-q\tau \mapsto \left|e^{-2(p\eta_3-q\eta_1)z}\cdot \frac{\sigma(z+p\tau-q)}{\sigma(p\tau-q)\sigma(z)}\right|
\]
is also periodic with respect to the lattice $\langle 1, \tau\rangle$, it follows from the compactness of $X$ that there exists a positive constant $M$ such that 
\[
\left|e^{-2(p\eta_3-q\eta_1)z}\cdot \frac{\sigma(z+p\tau-q)}{\sigma(p\tau-q)\sigma(z)}\right|\leq \frac{M}{\mathsf{d}(z)\cdot \mathsf{d}(p\tau-q)}
\]
holds for any $z, p\tau-q\in \mathbb{C}$, where $\mathsf{d}$ denotes the Euclidean distance from the lattice $\langle 1, \tau\rangle$. 
Therefore, by letting $K:=M\|1/\mathsf{d}(z)\|_{L^1}$, 
one has the inequality 
\[
\|u\|_{L^2} \leq \frac{K}{\mathsf{d}(p\tau-q)}\cdot \|v\|_{L^2}
\]
for $u$ and $v$ as in Proposition \ref{prop:kernel_sub}, which proves Theorem \ref{thm:main_1} when $X$ is an elliptic curve. 

\subsection{Counterexamples to naive generalisations}

There are several non-trivial hypotheses in Theorem \ref{thm:main_2}, but all of them are necessary. We present below various counterexamples when we naively drop these hypotheses. Whether they are best possible is a question that we do not discuss in details, but relative simplicity of the examples below seems to suggest that they may not be too far from being optimal. 

\subsubsection{$(p, q)$-forms with $q\not=0$}
Let $u$ be a $(p, q)$-form on a K\"ahler manifold $(X, \omega)$. 
In the proof of Theorem \ref{thm:main_1} and \ref{thm:main_2}, we estimated the value $\|\delbar_\rho u\|_{L^2}$ by using $\|u\|_{L^2}$ from below for a ${\rm U}(1)$-representation $\rho$ of the fundamental group $\pi_1(X, *)$ when $q=0$. 
Here we give an example which implies that the assumption $q=0$ is essential. 

Let $X$ be a compact complex torus $\mathbb{C}^2/\Lambda$ of dimension $2$, where $\Lambda\subset \mathbb{C}^2$ is a lattice of rank $4$. 
We attach the Euclidean metric to $X$. 
Consider a $(1, 1)$-form $u:=dz_1\wedge d\overline{z_2}\in A^{1, 1}$ on $X$, where $(z_1, z_2)$ is the standard coordinate of $\mathbb{C}^2$. 
Then, for a ${\rm U}(1)$-representation $\rho$ of $\Lambda$ with $c_1(\rho) = 0$ and $c_2 (\rho) = \ve$ (which exists by Lemma \ref{lmctrisoqt}; it is close to, but does not equal, the trivial representation $1$ when $0 < \ve \ll 1$) and the corresponding perturbed $\delbar$-operator $\delbar_\rho:=\delbar + \ve d\overline{z_2}\wedge$, one has that $\delbar_\rho u = 0$, whereas $\|u\|_{L^2}\not=0$. 

\subsubsection{Restriction to the neighbourhood of the identity when $p>0$}\label{eg:p>0}

By Theorem \ref{thm:main_2}, there exists a neighbourhood $B$ of $\mathbb{I}_X$ in $\mathcal{P}(X)$ such that $H^{p, 0}(X, F)=0$ holds for any $F\in B\setminus \{\mathbb{I}_X\}$. 
Here let us give some examples concerning the following question: how large can $B$ be? 
As we know the answer when $p=0$ by Lemma \ref{lem:flat_H0=0},  we will consider the case of $p>0$ in what follows. 

First, let $X$ be a compact complex torus. 
Then, as the cotangent bundle $\Omega_X^1$ is the holomorphically trivial vector bundle of rank ${\rm dim}\,X$, 
\[
H^{p, 0}(X, F)=H^0(X, \Omega_X^1\otimes F)
=\bigoplus_{{\rm dim}\,X}H^0(X, F)
\]
holds. Therefore it follows from Lemma \ref{lem:flat_H0=0} that $H^{p, 0}(X, F)=0$ holds for any $F\in {\rm Pic}^0(X)\setminus \{\mathbb{I}_X\}$, which means that one can take $B=\mathcal{P}(X)$ when $X$ is a compact complex torus (Note that $\mathcal{P}(X)={\rm Pic}^0(X)$ in this case). 

Next, let $X$ be a hyperelliptic surface, i.e.~$X$ is the quotient of a product of two elliptic curves by a finite abelian group. It is well-known that there exists a positive integer $m$ such that $K_X^{\otimes m}=\mathbb{I}_X$ holds, whereas $K_X$ is not holomorphically trivial. 
Therefore, for a flat line bundle $F:=K_X^{-1}\in\mathcal{P}(X)\setminus\{\mathbb{I}_X\}$, it holds that 
\[
H^{2, 0}(X, F) = H^0(X, K_X\otimes F) = H^0(X, \mathbb{I}_X) \cong\mathbb{C}, 
\]
which implies that one {\it cannot} take $B=\mathcal{P}(X)$ for $p=2$ in this case. 

From these two examples, it seems natural to ask whether one can take $B={\rm Pic}^0(X)$ in general, whereas the latter example means that one cannot further enlarge $B$ to be $\mathcal{P}(X)$. 

\begin{question}
Does Theorem \ref{thm:main_2} hold even after replacing $B$ with ${\rm Pic}^0(X)$? 
\end{question}

\subsubsection{Non-vanishing Ricci curvature}

Let $X$ be a compact Riemann surface of genus $g(X) \ge 2$. For any non-trivial flat line bundle $F$, $F^{-1}$ is also non-trivial and flat and hence we get
\begin{equation*}
	\dim H^0 (X,F \otimes K_X) = \dim H^0 (X,F^{-1}) -\deg (F^{-1}) +g(X)-1 = g(X)-1 \ge 1
\end{equation*}
by Riemann--Roch and Lemma \ref{lem:flat_H0=0}. In particular, $H^{1,0} (X,F) \neq 0$ holds for any non-trivial flat line bundle $F$, meaning that Theorem \ref{thm:main_2} cannot be generalised (at least naively) to the case when the Ricci curvature does not vanish.

\bibliography{ueda.bib}


\newpage 

\appendix

\section{Another proof of (\ref{eq:bddness_of_supK(F)_on_Pict}) by constructing finite coverings}

Here we give another proof of (\ref{eq:bddness_of_supK(F)_on_Pict}), the boundedness of $\{K(F)\mid F\in {\rm Pic}^t(X)\}$ for a non-trivial element $t\in G$, where $K(F)$ is as in Remark \ref{rmk:final_rmk_thm_1_1_i}. 
Note that we may (and thus we will) assume $d:={\rm dim}\,H^1(X, \mathcal{O}_X)>0$, since otherwise it is trivial as $\mathcal{P}(X)\setminus {\rm Pic}^0(X)$ is a finite set. 

As $G$ is a finite group, there exists a positive integer $m$ such that $F^m:=F^{\otimes m}$ is topologically trivial for any $F\in {\rm Pic}^t(X)$. 
Note that $m>1$ since $t\not=0$. 
Consider the map 
\[
{\rm Pic}^t(X)\ni F\mapsto F^m\in {\rm Pic}^0(X). 
\]
As it is a local homeomorphism from a compact space to a connected Hausdorff space, it is surjective. 
Thus there exists an element $F_0\in {\rm Pic}^t(X)$ such that $F_t^m\cong \mathbb{I}_X$. 
Note that such an element is not unique. 
Indeed, as $d>0$, one can take an element $L\in {\rm Pic}^0(X)\setminus \{\mathbb{I}_X\}$ such that $L^m\cong \mathbb{I}_X$. Another element which enjoys this property can be constructed as $F_1:=F_0\otimes L$. 
The assertion follows from the following: 
\begin{lemma}\label{lem:key_lem__in_prf_of_thm_1_1_ii}
Let $\mu\in \{0, 1\}$. 
Then the function $K(\bullet)$ is bounded on ${\rm Pic}^t(X)\setminus U_\mu$ for any small open neighbourhood $U_\mu$ of $F_\mu$. 
\end{lemma}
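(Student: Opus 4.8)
The plan is to realise the function $K$ on ${\rm Pic}^t(X)$ as the restriction, under pullback, of the function $K$ on ${\rm Pic}^0(\widetilde{X})$ for a suitable finite cover $\pi\colon\widetilde{X}\to X$, where the upper semi-continuity of Proposition~\ref{prop:usc} is already available, and then to invoke compactness. Fix $\mu\in\{0,1\}$ and set $n_0:=\mathrm{ord}_G(t)$, the order of $t$ in $G$ (a finite group by Lemma~\ref{lem:kahler_pic0_p_relation}). First I would construct the connected unramified cyclic covering $\pi\colon\widetilde{X}\to X$ of degree $n_0$ determined by $F_\mu$, namely the covering associated to the surjection $\pi_1(X,*)\to\mu_{n_0}\cong\mathbb{Z}/n_0$ given by the monodromy $m_X(F_\mu)$ of $F_\mu$; this is surjective, and $\widetilde X$ connected, exactly when $F_\mu$ has order $n_0$. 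By construction $\pi^*F_\mu\cong\mathbb{I}_{\widetilde{X}}$, and $\widetilde{X}$ is again compact K\"ahler once equipped with $\widetilde{\omega}:=\pi^*\omega$. Writing ${\rm Pic}^t(X)=F_\mu\otimes{\rm Pic}^0(X)$ as in~(\ref{eqpxdcpdcc}), every $F\in{\rm Pic}^t(X)$ is $F=F_\mu\otimes E$ with $E\in{\rm Pic}^0(X)$, so $\pi^*F=\pi^*E\in{\rm Pic}^0(\widetilde{X})$; thus $\pi^*({\rm Pic}^t(X))\subseteq{\rm Pic}^0(\widetilde{X})$ and this image is compact.

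The next step is the comparison $K(F)\le K(\pi^*F)$ whenever both $F$ and $\pi^*F$ are holomorphically non-trivial. For such $F$ we have $H^0(X,F)=0$ and $H^0(\widetilde{X},\pi^*F)=0$ by Lemma~\ref{lem:flat_H0=0}, so both quantities are the Rayleigh-type suprema of~(\ref{eq:expression_of_K_rho2}). Since $\delbar$ commutes with $\pi^*$ and the metrics on $\widetilde{X}$ are pulled back, any section $f$ satisfies $\|\pi^*f\|^2=(\deg\pi)\|f\|^2$ and $\|\delbar_{\pi^*F}\pi^*f\|^2=(\deg\pi)\|\delbar_Ff\|^2$, so the quotient is unchanged under pullback; as the pulled-back sections form a subspace of $A^{0,0}(\widetilde{X},\pi^*F)$, passing to suprema gives $K(F)\le K(\pi^*F)$.

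I would then pin down the exceptional locus $\{F\in{\rm Pic}^t(X):\pi^*F\cong\mathbb{I}_{\widetilde X}\}$, where the comparison is vacuous. Since $\ker(\pi^*\colon\mathcal{P}(X)\to\mathcal{P}(\widetilde{X}))=\langle F_\mu\rangle$, this locus equals $\langle F_\mu\rangle\cap{\rm Pic}^t(X)$. Here it is essential that $F_\mu$ be taken of order exactly $n_0$: because ${\rm Pic}^0(X)$ is a divisible abelian group the extension~(\ref{eqpxdcpdcc}) splits, so the representative $F_0$ may be chosen of order $n_0$, and for $\mu=1$ one takes $L$ to be a non-trivial $n_0$-torsion point of ${\rm Pic}^0(X)$ (which exists since $d>0$ and $n_0>1$), so that $F_1=F_0\otimes L$ is again of order $n_0$ and distinct from $F_0$. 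With $\mathrm{ord}(F_\mu)=n_0$ one has $F_\mu^{\,k}\in{\rm Pic}^{kt}(X)$, so $\langle F_\mu\rangle\cap{\rm Pic}^t(X)=\{F_\mu\}$ is a single point. Hence for $F\in{\rm Pic}^t(X)\setminus U_\mu$ the bundle $\pi^*F$ is non-trivial, and $\mathcal{K}:=\pi^*\big({\rm Pic}^t(X)\setminus U_\mu\big)$ is a compact subset of ${\rm Pic}^0(\widetilde{X})\setminus\{\mathbb{I}_{\widetilde X}\}$, the single bad point having been removed with $U_\mu$.

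To conclude, I apply Proposition~\ref{prop:usc} together with Remark~\ref{rmk:final_rmk_thm_1_1_i} on $\widetilde{X}$: the function $K$ is upper semi-continuous on ${\rm Pic}^0(\widetilde{X})\setminus\{\mathbb{I}_{\widetilde X}\}$, hence bounded on the compact set $\mathcal{K}$ by some $M_\mu<\infty$. The comparison then gives $K(F)\le K(\pi^*F)\le M_\mu$ for all $F\in{\rm Pic}^t(X)\setminus U_\mu$, which is the lemma; the boundedness~(\ref{eq:bddness_of_supK(F)_on_Pict}) follows at once by covering ${\rm Pic}^t(X)=\big({\rm Pic}^t(X)\setminus U_0\big)\cup\big({\rm Pic}^t(X)\setminus U_1\big)$ for disjoint $U_0\ni F_0$, $U_1\ni F_1$. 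The hard part will be the bookkeeping around the exceptional locus: arranging that $\langle F_\mu\rangle$ meets the component ${\rm Pic}^t(X)$ only at $F_\mu$ is precisely what forces the minimal-order choice of the two base points $F_0,F_1$ and is where the hypothesis $d>0$ is used; by contrast the analytic input—the pullback comparison and the upper semi-continuity/compactness endgame—is routine.
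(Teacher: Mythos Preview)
Your proof is correct and follows essentially the same architecture as the paper's: construct a finite cyclic cover $\pi:\widetilde X\to X$ trivialising $F_\mu$, compare $K(F)\le K(\pi^*F)$ (this is exactly Lemma~\ref{lem:lem_for_key_lem__in_prf_of_thm_1_1_ii}), and conclude by upper semi-continuity plus compactness on ${\rm Pic}^0(\widetilde X)$.

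The one genuine difference is how you control the locus where $\pi^*F$ becomes trivial. The paper argues that $\pi^*|_{{\rm Pic}^t(X)}$ is \emph{injective}, using the exact sequence $H_1(\widetilde X,\mathbb Z)\to H_1(X,\mathbb Z)\to\Gamma_{\rm ab}\to 0$ together with the fact that the monodromy of $F\otimes G^{-1}\in{\rm Pic}^0(X)$ kills torsion. You instead compute $\ker(\pi^*)\cap\mathcal P(X)=\langle F_\mu\rangle$ directly (via $\pi_*\mathcal O_{\widetilde X}=\bigoplus_k F_\mu^{-k}$, which you should perhaps state) and then observe $\langle F_\mu\rangle\cap{\rm Pic}^t(X)=\{F_\mu\}$. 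Your route is cleaner but forces you to take $F_\mu$ of order exactly $n_0=\mathrm{ord}_G(t)$; you correctly note that this is achievable by divisibility of ${\rm Pic}^0(X)$, and it is consistent with the paper's setup upon choosing $m=n_0$. Both arguments arrive at the same conclusion that removing the single point $F_\mu$ keeps $\pi^*$ away from $\mathbb I_{\widetilde X}$, after which the endgame is identical.
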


As is clear, it follows from Lemma \ref{lem:key_lem__in_prf_of_thm_1_1_ii} that $K(\bullet)$ is a bounded function on ${\rm Pic}^t(X)$. 
Therefore it has the supremum $K_0:=\sup\{K(F)\mid F\in {\rm Pic}^t(X)\}<\infty$, from which the assertion follows. 

\begin{proof}[Proof of Lemma \ref{lem:key_lem__in_prf_of_thm_1_1_ii}]
Consider the finite covering $\pi\colon\widetilde{X}\to X$ of $X$ constructed as 
\[
\widetilde{X} := \{\xi\in F_\mu\mid \xi^{\otimes m}=1\}, 
\]
where $1$ is a fixed global holomorphic frame of the holomorphically trivial line bundle $F_\mu^m$ and $\pi$ is the restriction of the projection $F_\mu\to X$. 
Then the line bundle $\pi^*F_\mu$ is holomorphically trivial, since one can construct a global holomorphic frame tautologically by construction, i.e. 
\[
\{(\xi, \xi)\mid \xi\in\widetilde{X}\}\subset \{(\xi, \eta)\in \widetilde{X}\times F_\mu\mid \eta\in F_\mu|_{\pi(\xi)}\}=\pi^*F_\mu
\]
defines a global frame of $\pi^*F_\mu$. 
Therefore, as any element of ${\rm Pic}^t(X)$ is topologically isomorphic to $F_\mu$, one can consider the map 
\[
\pi^*|_{{\rm Pic}^t(X)}\colon {\rm Pic}^t(X)\ni F\mapsto  \pi^*F\in {\rm Pic}^0(\widetilde{X}). 
\]
For proving the assertion, it is sufficient to show that $\pi^*|_{{\rm Pic}^t(X)}$ is injective. 
Indeed, if it holds, then the image of ${\rm Pic}^t(X)\setminus U_\mu$ by the map $\pi^*|_{{\rm Pic}^t(X)}$ is contained in ${\rm Pic}^0(\widetilde{X})\setminus \widetilde{U}_\mu$ for some open neighbourhood $\widetilde{U}_\mu$ of $\mathbb{I}_{\widetilde{X}}$. 
Take the maximum $K_\mu := \{\widetilde{K}(F)\mid F\in {\rm Pic}^0(\widetilde{X})\setminus \widetilde{U}_\mu\}$, 
where $\widetilde{K}(\bullet)$ is the function on $\mathcal{P}(\widetilde{X})$ whose definition is the same as that of $K(\bullet)$, i.e.~$\widetilde{K}(F)$ the minimum of all the positive constants $M$ such that the condition {\bf ($L^2$-estimate)$_M$} holds for $(\widetilde{X}, F, \pi^*\omega, h)$, where $h$ is a flat metric on $F$ (the existence of the maximum is a consequence of Proposition \ref{prop:usc}). 
By Lemma \ref{lem:lem_for_key_lem__in_prf_of_thm_1_1_ii} below, it follows that $K(F)\leq K_\mu$ for any $F\in {\rm Pic}^t(X)\setminus U_\mu$, which is nothing but the assertion. 

In what follows, we show that $\pi^*|_{{\rm Pic}^t(X)}$ is injective. 
Take $F, G\in {\rm Pic}^t(X)$ such that $\pi^*F\cong \pi^*G$. 
Set $\rho:=m_X(F\otimes G^{-1})$, i.e.~$\rho\colon \pi_1(X, *)\to {\rm U}(1)$ is the representation which corresponds to the flat bundle $F\otimes G^{-1}$. 
As $F, G\in {\rm Pic}^t(X)$, $F\otimes G^{-1}$ is topologically trivial. 
Thus, from the isomorphism (\ref{eq:isom_m_X_Pic0}), it follows that $\rho\in {\rm Hom}_{\rm group}(H_1 (X, \mathbb{Z})_{\rm free}, {\rm U(1)})$. 

As $\pi$ is a covering, one has the natural exact sequence 
\[
1 \to \pi_1(\widetilde{X}, *) \to \pi_1(X, *) \to \Gamma \to 1
\]
of groups, where $\Gamma$ is the deck transformation group of $\pi$. 
As the abelianisation functor is right exact, one obtains the exact sequence 
\[
H_1(\widetilde{X}, \mathbb{Z}) \to H_1(X, \mathbb{Z}) \to \Gamma_{\rm ab} \to 0, 
\]
where $\Gamma_{\rm ab}$ is the abelianisation of $\Gamma$, which induces the exact sequence 
\[
0\to {\rm Hom}_{\rm group}(\Gamma_{\rm ab}, {\rm U}(1))
\to {\rm Hom}_{\rm group}( H_1(X, \mathbb{Z}), {\rm U}(1))
\to {\rm Hom}_{\rm group}(H_1(\widetilde{X}, \mathbb{Z}), {\rm U}(1)). 
\]
From $\pi^*(F\otimes G^{-1})\cong \mathbb{I}_{\widetilde{X}}$, 
it follows that $\rho$ is an element of the kernel of the last arrow in the sequence above. 
Thus, by the exactness, it tunes out that $\rho$ comes from an element of ${\rm Hom}_{\rm group}(\Gamma_{\rm ab}, {\rm U}(1))$, or equivalently, 
there exists a group homomorphism $f\colon \Gamma_{\rm ab}\to {\rm U}(1)$ such that the following diagram commutes. 
\begin{displaymath}
		\xymatrixcolsep{8pc}\xymatrixrowsep{4pc}\xymatrix{ 
H_1(X, \mathbb{Z}) \ar@{->}[r]^-{\rho} \ar@{->>}[d] &  {\rm U}(1)  \\
\Gamma_{\rm ab} \ar@{->}[ur]_-{f}& } 
\end{displaymath}
Recall that $\rho\in {\rm Hom}_{\rm group}(H_1 (X, \mathbb{Z})_{\rm free}, {\rm U(1)})$. 
Therefore $\rho([\gamma])=1$ holds for any torsion element $[\gamma]\in \pi_1(X, \mathbb{Z})$. 
As $\pi$ is a finite covering, $\Gamma$ is a finite group. 
Thus one has that $f\equiv 1$, from which $F\cong G$ follows. 
\end{proof}

\begin{lemma}\label{lem:lem_for_key_lem__in_prf_of_thm_1_1_ii}
Let $M$ be a positive number, $F$ an element of $\mathcal{P}(X)$, $h$ a flat metric on $F$, and $\pi\colon \widetilde{X}\to X$ be as in the proof of Lemma \ref{lem:key_lem__in_prf_of_thm_1_1_ii}. 
Assume that $\pi^*F$ is not holomorphically trivial. 
Then, if the condition {\bf ($L^2$-estimate)$_M$} holds for $(\widetilde{X}, \pi^*F, \pi^*\omega, \pi^*h)$, the condition {\bf ($L^2$-estimate)$_M$} holds also for $(X, F, \omega, h)$. 
\end{lemma}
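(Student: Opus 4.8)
The plan is to solve the $\delbar$-equation upstairs on $\widetilde{X}$, where the hypothesis supplies the estimate, and then descend the solution to $X$ using the deck transformation group. Recall from the construction in the proof of Lemma \ref{lem:key_lem__in_prf_of_thm_1_1_ii} that $\pi$ is a normal (Galois) covering; let $\Gamma$ denote its finite deck transformation group, so that $N:=\#\Gamma$ equals the covering degree and $\Gamma$ acts simply transitively on the fibres of $\pi$. Since $\pi\circ\gamma=\pi$ for every $\gamma\in\Gamma$, the group $\Gamma$ acts on the total space of $\pi^*F$ by holomorphic bundle automorphisms that fix the fibre coordinate, and these are isometries for $\pi^*h$; consequently a smooth section of $\pi^*F$ is $\Gamma$-invariant if and only if it is the pullback $\pi^*u$ of a (unique) smooth section $u$ of $F$ on $X$. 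Given a $\delbar$-closed $v\in A^{0,1}(X,F)$ with trivial Dolbeault class, I would first set $\widetilde{v}:=\pi^*v\in A^{0,1}(\widetilde{X},\pi^*F)$. Writing $v=\delbar u_0$ for a smooth section $u_0$ of $F$ (possible precisely because $[v]=0$), we get $\widetilde{v}=\delbar(\pi^*u_0)$, so $[\widetilde{v}]=0$ in $H^{0,1}(\widetilde{X},\pi^*F)$, and the assumed condition \textbf{($L^2$-estimate)$_M$} for $(\widetilde{X},\pi^*F,\pi^*\omega,\pi^*h)$ provides a smooth section $\widetilde{u}$ of $\pi^*F$ with $\delbar\widetilde{u}=\widetilde{v}$ and $\|\widetilde{u}\|_{L^2}\le M\|\widetilde{v}\|_{L^2}$.

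The crucial point, and the place where the hypothesis that $\pi^*F$ is not holomorphically trivial is used, is that this solution is automatically $\Gamma$-invariant. Indeed, $\pi^*F$ is flat (the pullback of a flat metric is flat) and not holomorphically trivial, so Lemma \ref{lem:flat_H0=0} gives $H^0(\widetilde{X},\pi^*F)=0$; hence the solution of $\delbar\widetilde{u}=\widetilde{v}$ is unique. Because $\widetilde{v}=\pi^*v$ is manifestly $\Gamma$-invariant and $\delbar$ commutes with the holomorphic $\Gamma$-action, each $\gamma\cdot\widetilde{u}$ also satisfies $\delbar(\gamma\cdot\widetilde{u})=\gamma\cdot\widetilde{v}=\widetilde{v}$, so uniqueness forces $\gamma\cdot\widetilde{u}=\widetilde{u}$ for all $\gamma\in\Gamma$. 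By the descent remark above, $\widetilde{u}$ therefore equals $\pi^*u$ for a unique smooth section $u$ of $F$, and from $\pi^*(\delbar u)=\delbar\widetilde{u}=\widetilde{v}=\pi^*v$ together with the injectivity of $\pi^*$ on forms we conclude $\delbar u=v$ on $X$. (One could instead replace this uniqueness argument by averaging $\widetilde{u}$ over $\Gamma$, an operation that is norm-nonincreasing, but the uniqueness route is cleaner and is exactly what the nontriviality hypothesis buys us.)

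Finally I would transfer the estimate across the covering. Since $\pi$ is a local isometry for the pullback metrics and has degree $N$, one has $\int_{\widetilde{X}}|\pi^*s|^2\,dV_{\pi^*\omega}=N\int_X|s|^2\,dV_\omega$ for any $F$-valued tensor $s$ on $X$; applying this to $u$ via $\widetilde{u}=\pi^*u$ and to $v$ via $\widetilde{v}=\pi^*v$, and combining with the upstairs estimate, gives
\begin{equation*}
N\int_X|u|_h^2\,dV_\omega=\int_{\widetilde{X}}|\widetilde{u}|^2_{\pi^*h}\,dV_{\pi^*\omega}\le M^2\int_{\widetilde{X}}|\widetilde{v}|^2_{\pi^*h,\pi^*\omega}\,dV_{\pi^*\omega}=M^2N\int_X|v|^2_{h,\omega}\,dV_\omega ,
\end{equation*}
and dividing by $N$ yields exactly \textbf{($L^2$-estimate)$_M$} for $(X,F,\omega,h)$. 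I expect the main work to lie not in any single inequality but in setting up the equivariant framework correctly: verifying that the $\Gamma$-action on $\pi^*F$ is by holomorphic isometries (so that it commutes with $\delbar$ and preserves the $L^2$-norm), that $\Gamma$-invariance is precisely the descent condition to $X$ for the normal covering $\pi$, and that the nontriviality of $\pi^*F$ indeed forces $\Gamma$-invariance of $\widetilde{u}$ through Lemma \ref{lem:flat_H0=0}.
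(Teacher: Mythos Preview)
Your proposal is correct and follows essentially the same approach as the paper: pull back $v$, solve upstairs with the given estimate, use Lemma \ref{lem:flat_H0=0} (via the nontriviality of $\pi^*F$) to deduce uniqueness and hence $\Gamma$-invariance of the solution, descend, and cancel the covering degree in the $L^2$ norms. Your write-up is in fact more careful than the paper's in spelling out the equivariant framework and the descent condition.
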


\begin{proof}
Take a smooth $\delbar$-closed $(0, 1)$-form $v$ with values in $F$ whose Dolbeault cohomology class $[v]\in H^{0, 1}(X, F)$ is trivial. 
Then clearly $\pi^*v$ is a $(0, 1)$-form with values in $\pi^*F$ whose Dolbeault cohomology class $[\pi^*v]\in H^{0, 1}(\widetilde{X}, \pi^*F)$ is trivial. 
Thus, if the condition {\bf ($L^2$-estimate)$_M$} holds for $(\widetilde{X}, \pi^*F, \pi^*\omega, \pi^*h)$, 
there exists a smooth global section $\widetilde{u}$ of $\pi^*F$ such that $\delbar \widetilde{u}=\pi^*v$ and 
\[
\sqrt{\int_{\widetilde{X}}|\widetilde{u}|_{\pi^*h}^2\,dV_{\pi^*\omega}} \leq M \sqrt{\int_{\widetilde{X}}|\pi^*v|_{\pi^*h, \pi^*\omega}^2\,dV_{\pi^*\omega}}
=M \sqrt{\ell\cdot \int_{X}|v|_{h, \omega}^2\,dV_{\omega}}
\]
hold, where $\ell$ is the degree of the covering map $\pi$. 
Let $\gamma$ be a deck transformation of $\pi$. 
Then $\gamma^*\widetilde{u}$ is also a solution of the $\delbar$-equation, since 
$\delbar\gamma^*\widetilde{u}=\gamma^*\delbar\widetilde{u}=\gamma^*\pi^*v=\pi^*v$. 
Therefore it follows from Lemma \ref{lem:flat_H0=0} and the assumption that $\widetilde{u}$ is invariant by the action of any deck transformation of $\pi$. 
Thus there exists a smooth global section $u$ of $F$ such that $\pi^*u=\widetilde{u}$, which proves the assertion since
\[
\sqrt{\int_{\widetilde{X}}|\widetilde{u}|_{\pi^*h}^2\,dV_{\pi^*\omega}} 
=\sqrt{\int_{\widetilde{X}}|\pi^*u|_{\pi^*h}^2\,dV_{\pi^*\omega}} 
=\sqrt{\ell\cdot \int_{X}|u|_{h}^2\,dV_{\omega}} 
\]
holds. 
\end{proof}

\end{document}